\theoremstyle{plain}
\newtheorem{theorem}{Theorem}[section]
\newtheorem{proposition}[theorem]{Proposition}
\newtheorem{lemma}[theorem]{Lemma}
\newtheorem{corollary}[theorem]{Corollary}
\theoremstyle{remark}
\newtheorem{definition}[theorem]{Definition}
\newtheorem{remark}[theorem]{Remark}
\newcommand{\bigCI}{\mathrel{\text{\scalebox{1.07}{$\perp\mkern-10mu\perp$}}}}
\DeclareMathOperator{\supp}{supp}
\DeclareMathOperator{\diam}{diam}
\DeclareMathOperator{\id}{id}
\DeclareMathOperator{\Var}{Var}
\DeclareMathOperator*{\argmin}{arg\, min}
\DeclareMathOperator*{\argmax}{arg\, max}
\newcommand{\1}{\mathbf{1}}
\newcommand{\mykill}[1]{}
\newcommand{\eins}{\1}
\newcommand{\X}{\mathcal{X}}
\newcommand{\Y}{\mathcal{Y}}
\newcommand{\Z}{\mathcal{Z}}
\newcommand{\indep}{\,\rotatebox[origin=c]{90}{$\models$}}
\begin{document}

\begin{frontmatter}
\title{Computational methods for adapted optimal transport}
\runtitle{Computational methods for adapted optimal transport}

\begin{aug}
\author[A]{\fnms{Stephan} \snm{Eckstein}\ead[label=e1,mark]{stephan.eckstein@math.ethz.ch}}
\and
\author[A]{\fnms{Gudmund} \snm{Pammer}\ead[label=e2,mark]{gudmund.pammer@math.ethz.ch}}

\address[A]{Department of Mathematics, ETH Zürich,
	\printead{e1,e2}}

\end{aug}

\begin{abstract}
Adapted optimal transport (AOT) problems are optimal transport problems for distributions of a time series where 
couplings are constrained to have a temporal causal structure. In this paper, we develop computational tools for solving AOT problems numerically. 
First, we show that AOT problems are stable with respect to perturbations in 
the marginals and thus arbitrary AOT problems can be approximated by sequences of linear programs. We further study entropic 
methods to solve AOT problems. We show that any entropically regularized AOT problem 
converges to the corresponding unregularized problem if the regularization 
parameter goes to zero. The proof is based on a novel method---even in the 
non-adapted case---to easily obtain smooth approximations of a given coupling 
with fixed marginals. Finally, we show tractability of the adapted version of Sinkhorn's algorithm. We give explicit solutions for the 
occurring projections and prove that the procedure converges to the optimizer 
of the entropic AOT problem.
\end{abstract}

\begin{keyword}[class=MSC]
\kwd[Primary ]{90C15}
\kwd[; secondary ]{49N05}
\end{keyword}

\begin{keyword}
\kwd{Adapted optimal transport}
\kwd{Causal optimal transport}
\kwd{Entropic Regularization}
\kwd{Sinkhorn's Algorithm}
\kwd{Stability}
\kwd{Regularization}
\kwd{Algorithm}
\end{keyword}

\end{frontmatter}

\section{Introduction}
For distributions of a time series, the usual Wasserstein distance does not account for the underlying information structure dictated by the flow of time. This can be inadequate in certain situations:  indeed, problems like optimal stopping, stochastic control 
or
pricing and hedging in mathematical finance, 
as well as
operations such as the Doob decomposition or the Snell envelop, are discontinuous with respect to the Wasserstein topology, and Wasserstein barycenters of martingale measures do not retain the martingale property, 
see \cite{backhoff2020adapted,bartl2021wasserstein}.
The adapted Wasserstein distance remedies these problems \cite{backhoff2020all,bartl2021wasserstein}. More generally, adapted optimal transport (AOT) problems are becoming increasingly popular as an alternative to classical optimal transport (OT) whenever a temporal component is included in the setting.
These problems range from the study of \textsl{dynamic} Cournot--Nash equilibria in games of mean field kind (see, e.g., \cite{acciaio2021cournot, backhoff2022dynamic}), generative models where temporal data is used (see, e.g., \cite{xu2021quantized, xu2020cot}), quantifying approximation quality in multistage stochastic programs (see, e.g., \cite{pflug2010version}), derivation of concentration inequalities on the Wiener space (see, e.g., \cite{lassalle2018causal}), robust pricing and hedging in mathematical finance (see, e.g., \cite{backhoff2020adapted}), and sensitivity analysis in multiperiod optimization problems (see, e.g., \cite{bartl2022sensitivity}).


\subsection{Adapted optimal transport problems}
For two marginal distributions $\mu$ and $\nu$, the classical optimal transport problem is of the form
\begin{align}
\tag{OT}
\inf_{\pi \in \Pi(\mu, \nu)} \int c(x, y) \,\pi(dx, dy),
\end{align}
where $c$ is a cost function and $\Pi(\mu, \nu)$ is the set of couplings between $\mu$ and $\nu$. We are interested in settings where $\mu$ and $\nu$ are distributions of a time series, say on $\mathbb{R}^N$,
\[
\mu \sim (X_1, X_2, \dots, X_N) ~ \text{ and } ~ \nu \sim (Y_1, Y_2, \dots, Y_N).
\]
The basic concept of AOT is most natural in its Monge formulation. Recall that the Monge formulation for classical OT is given by
\begin{align}
\label{eq:mongeot}
\tag{Monge-OT}
\inf_{T :~ T_* \mu = \nu} \int c(x, T(x)) \,\mu(dx),
\end{align}
where the infimum is taken over all measurable maps $T : \mathbb{R}^N \rightarrow \mathbb{R}^N$ satisfying the pushforward condition that $T_* \mu$ equals $\nu$. 
The basic premise of AOT is that these transport maps $T$ should be \emph{adapted}. While general maps can be written as $T(x) = (T_1(x), \dots, T_N(x))$, adapted maps are of the form
\begin{align}
\tag{adapted Monge maps}
T(x) = (T_1(x_1), T_2(x_1, x_2), \dots, T_N(x)),
\end{align}
i.e., $T_t$ only depends on $x_1, \dots, x_t$. Adapted transport maps encode the intuition that the decision of where to transport mass at time $t$ can only depend on the realizations of the values of $x$ that are known up to time $t$. Using adapted transport maps, we can define the adapted analogue to \eqref{eq:mongeot} as 
\begin{align}
\tag{Monge-AOT}
\label{eq:causal-monge}
\inf_{\substack{T :~ T_* \mu = \nu,\\ T \text{ adapted}}} \int c(x, T(x)) \,\mu(dx).
\end{align}
Going towards a Kantorovich formulation over couplings, consider the coupling $\pi \in \Pi(\mu, \nu)$ which is induced by $T$, i.e., $\pi \sim (X, T(X))$. Adaptedness of $T$ means that under $\pi$, the first $t$ components of $Y$ are independent of $(X_{t+1}, \dots, X_N)$ given $(X_1, \dots, X_t)$. This is a condition which can be transferred to arbitrary couplings, and we accordingly define
\begin{align*}
\Pi_c(\mu, \nu) = \{ \pi \in \Pi(\mu, \nu): \text{if } (X, Y) \sim \pi, \text{ then } (Y_1, \dots, Y_t) \indep_{(X_1, \dots, X_t)} (X_{t+1}, \dots, X_N)\}
\end{align*}
which we call the set of causal couplings between $\mu$ and $\nu$ (cf.~Definition \ref{def:causality} and Remark \ref{rem:equivdefcausal}). This leads to the first AOT problem, called causal OT, which is studied in this paper,
\begin{align}
\label{eq:causal-ot}
\tag{Causal-OT}
\inf_{\pi \in \Pi_c(\mu, \nu)} \int c(x, y) \,\pi(dx, dy).
\end{align}
Notably, \eqref{eq:causal-ot} is the \emph{correct} relaxation of \eqref{eq:causal-monge} in the sense that adapted Monge maps are dense in $\Pi_c(\mu,\nu)$ for suitable non-atomic measures $\mu$, see \cite{beiglbock2018denseness}.
While the problems \eqref{eq:causal-monge} and \eqref{eq:causal-ot} respect the temporal component, both problems lack symmetry.
In many situations, and particularly when going towards a concept of adapted Wasserstein distance, symmetry is desirable or even necessary.  
To alleviate asymmetry, one may symmetrize the causality constraint and define the set of bicausal couplings
\begin{align*}
\Pi_{bc}(\mu, \nu) = \{ \pi \in \Pi(\mu, \nu): \text{if } (X, Y) \sim \pi, \text{ then } (Y_1, \dots, Y_t) &\indep_{(X_1, \dots, X_t)} (X_{t+1}, \dots, X_N), \\
(X_1, \dots, X_t) &\indep_{(Y_1, \dots, Y_t)} (Y_{t+1}, \dots, Y_N) \}
\end{align*}
and the corresponding  problem
\begin{align}
\label{eq:bicausal-ot}
\tag{Bicausal-OT}
\inf_{\pi \in \Pi_{bc}(\mu, \nu)} \int c(x, y) \,\pi(dx, dy).
\end{align}
In the same spirit as \eqref{eq:causal-monge} and \eqref{eq:causal-ot}, one can also consider a Monge version of \eqref{eq:bicausal-ot}, c.f.\ \cite{beiglbock2022denseness}.
We call both \eqref{eq:causal-ot} and \eqref{eq:bicausal-ot} AOT problems and, if we can use both interchangeably, we simply write
\begin{align}
\tag{AOT}
\label{eq:aot}
\inf_{\pi \in \Pi_{\bullet}(\mu, \nu)} \int c(x, y) \,\pi(dx, dy),
\end{align}
where $\bullet$ is a placeholder for either symbol, $c$ or $bc$.


\subsection{Discretization and stability} 
With increasing theoretical interest in AOT problems, building a solid computational foundation is important to enable the utilization of AOT in applications. As a starting point, like classical OT, AOT reduces to a linear program whenever the marginal distributions are finitely supported, see Lemma \ref{lem:discretization}. Discrete bicausal problems even allow for a backward induction, reducing the optimization to iteratively solving families of classical OT problems, see \cite{backhoff2017causal}.

To solve general AOT problems with marginals $\mu$ and $\nu$, the baseline strategy is to approximate the marginals by a sequence of discrete (i.e.~finitely supported) marginals $\mu^n$ and $\nu^n$. Controlling the resulting approximation error requires a suitable stability of AOT problems with respect to perturbations in the marginals, which means the mapping
\begin{align*}
(\mu, \nu) &\mapsto \inf_{\pi \in \Pi_\bullet(\mu, \nu)} \int c(x, y) \,\pi(dx, dy)
\end{align*}
has to be continuous in a suitable sense. We provide both qualitative and quantitative continuity results for this mapping in Theorems \ref{thm:qual_stab} and \ref{thm:quantitative_stab}. The results extend to entropically regularized AOT problems (cf.~Section \ref{subsec:introentropic} below), and in this case we even obtain stability results for the respective optimizing couplings.
Our main technique is based on an adapted version of the concept of the \emph{shadow} -- a coupling which shadows the dependence structure of another coupling between differing marginals (see Definition \ref{def:shadow}) -- which builds on the recent work in \cite{eckstein2021quantitative}. A necessary component of our stability results is that marginals are compared with respect to an adapted Wasserstein distance, and not just a Wasserstein distance. This entails an important practical implication: when approximating an AOT problem with marginals $\mu$ and $\nu$ by a sequence of discrete problems, the marginals have to be approximated with respect to an adapted distance, and not just the Wasserstein distance. Such approximations are studied for instance in \cite{backhoff2020estimating,bartl2021wasserstein,beiglbock2018denseness}.

Our results show that AOT is surprisingly well behaved for an OT problem with additional constraints. This is not always the case, for instance, the related martingale OT problem is not stable in dimension $d\geq 2$ (at least for the weak topology on the marginals) as shown in \cite{bruckerhoff2021instability}, while the corresponding question in dimension $d=1$ received considerable attention \cite{backhoff2019stability, guo2019computational,wiesel2019continuity}. We further mention that computational methods for AOT problems have so far mostly been focused on backward induction, see \cite{pflug2010version,pflug2012distance,pichler2021nested}. A notable exception is \cite{xu2020cot}, where the authors utilize an approximate numerical scheme for causal OT based on duality.

\subsection{Entropic regularization}\label{subsec:introentropic} Many of the recent applications of OT build on fast approximate solutions which are based on regularization (e.g., \cite{cuturi2014fast,gulrajani2017improved,peyre2019computational}). Particularly entropic regularization is widely used, as it allows for a computationally efficient solution of the regularized problem using Sinkhorn's algorithm \cite{cuturi2013sinkhorn}. 
We study entropic regularization for general AOT problems in this paper. Entropic AOT problems are of the form
\begin{align}
\tag{Entropic-AOT}
\label{eq:entropic-aot}
\inf_{\pi \in \Pi_\bullet(\mu, \nu)} \int c \,d\pi + \varepsilon D_{\rm KL}(\pi, P),
\end{align}
where $\varepsilon > 0$, $D_{\rm KL}$ denotes the Kullback-Leibler divergence (or relative entropy) and $P= \mu \otimes \nu$ is the product measure between $\mu$ and $\nu$.
Utilization of entropic regularization for discrete bicausal problems was recently initiated by \cite{pichler2021nested}, where the authors used Sinkhorn's algorithm to speed up the computation of the backward induction, see also Remark \ref{rem:sink_back} for a short discussion of the approach in \cite{pichler2021nested} compared with the adapted version of Sinkhorn's algorithm (cf.~Section \ref{subsec:introsink}) studied in this paper. 
We showcase in Theorem \ref{thm:reg_conv} that for general AOT problems, similar to classical OT, the approximation error introduced by entropic regularization vanishes for a small enough regularization parameter, i.e.,
\[
\inf_{\pi \in \Pi_\bullet(\mu, \nu)} \int c \,d\pi + \varepsilon D_{\rm KL}(\pi, P) ~~\stackrel{\varepsilon \rightarrow 0}{\longrightarrow} ~~ \inf_{\pi \in \Pi_\bullet(\mu, \nu)} \int c \,d\pi.
\]
The proof is again based on the adapted version of the shadow, which is used in \mbox{Lemma \ref{lemma:shadowshadow}} to obtain smoothed versions of a given coupling. Even for classical OT, this approach may be a fruitful alternative to other methods studied for instance in \cite{bindini2020smoothing,carlier2017convergence,eckstein2020robust,leonard2012schrodinger}. For our purposes, this method has the benefit that the smoothed coupling inherits the adaptedness properties of the initial coupling. 

\subsection{Sinkhorn's algorithm}\label{subsec:introsink} We introduce and study an adapted version of Sinkhorn's algorithm to solve \eqref{eq:entropic-aot}. In this algorithm, one defines the initial measure $\pi^{(0)}$ via $\frac{d\pi^{(0)}}{dP} \propto \exp(-\frac{1}{\varepsilon} c)$, and iteratively 
\[
\pi^{(2k)} := \argmin_{\pi \in \mathcal{Q}_1} D_{\rm KL}(\pi, \pi^{(2k-1)}) ~ \text{ and } \pi^{(2k-1)} := \argmin_{\pi \in \mathcal{Q}_2} D_{\rm KL}(\pi, \pi^{(2k-2)})
\]
for $k \geq 1$.
Hereby, the sets $\mathcal{Q}_1$ and $\mathcal{Q}_2$ are chosen such that $\Pi_\bullet(\mu, \nu) = \mathcal{Q}_1 \cap \mathcal{Q}_2$ and the respective projection steps should be easy to compute. 
For instance for the causal problem \eqref{eq:causal-ot}, we choose $\mathcal{Q}_1 = \cup_{\tilde\nu} \Pi_c(\mu, \tilde\nu)$, the set of couplings with fixed first and arbitrary second marginal, and $\mathcal{Q}_2 = \cup_{\tilde\mu} \Pi(\tilde\mu, \nu)$, the set of couplings with arbitrary first and fixed second marginal.
We show computational tractability of the resulting projection steps, both in the primal and dual form, see Lemmas \ref{lem:proj_primal} and \ref{lem:proj_dual}, respectively. We further show that the values corresponding to $\pi^{(k)}$ converge, even linearly, to the optimal value of \eqref{eq:entropic-aot},
see Theorem \ref{thm:sink_lin_conv}.
The method of proof is based on the recent work \cite{carlier2021linear} proving convergence of Sinkhorn's algorithm for multimarginal OT. We give another short proof of convergence of Sinkhorn's algorithm in Proposition \ref{prop:conv_causal_sink} based on stability, which is however only applicable to the causal problem. In contrast to Theorem \ref{thm:sink_lin_conv}, Proposition \ref{prop:conv_causal_sink} does not give a rate of convergence, but it applies even to unbounded cost functions under suitable moment assumptions on the marginals.

\subsection{Duality}
Duality is a key concept to fully understand and work with optimal transport problems, which has been studied in the context of AOT by, e.g., \cite{acciaio2021cournot,backhoff2017causal}. However, particularly the dual formulations for entropic AOT, which are necessary tools to study the adapted version of Sinkhorn's algorithm, are not included in the current literature. In Proposition \ref{prop:duality}, we show that for bounded and continuous cost functions and suitably defined sets of functions $\mathcal{H}_\bullet \subset L^\infty$, we have the dual formulations:
\begin{align*}
\inf_{\pi \in \Pi_{\bullet}(\mu, \nu)} \int c \,d\pi&= \sup_{h \in \mathcal{H}_\bullet:\,h\leq c} \int h \,dP, \\
\inf_{\pi \in \Pi_{\bullet}(\mu, \nu)} \int c \,d\pi + \varepsilon D_{\rm KL}(\pi, P)&= \sup_{h \in \mathcal{H}_\bullet} \int h - \varepsilon \exp\Big(\frac{s-c}{\varepsilon}\Big) + \varepsilon \,dP.
\end{align*}
The proof builds on known duality results and Sion's minimax theorem.
For entropic AOT, we show in Lemma \ref{lem:schrodinger} that the dual supremum is attained and that optimizers can be characterized by adapted versions of the Schrödinger equations.  This result is a key component in the proof of Theorem \ref{thm:sink_lin_conv} showing the linear convergence of Sinkhorn's algorithm.

\subsection{Structure of the paper}
The remainder of this paper is structured as follows: in Section \ref{sec:notation} we introduce necessary notation and define the AOT problems. In Section \ref{sec:stab}, results related to the shadow and stability of AOT problems are derived. Section \ref{sec:conv_reg} is concerned with the convergence of the entropically regularized AOT problem for vanishing regularization parameter. Section \ref{subsec:duality} states and proves the dual formulations for AOT.  And finally, Section \ref{sec:sinkhorn} gives the results related to the adapted version of Sinkhorn's algorithm.

%
%
%
%
%

\section{Setting, notation and preliminary results}
\label{sec:notation}
\subsection{Notational conventions}
Let $N \in \mathbb{N}$ denote the number of time steps and write $\mathcal{X} = \mathcal{X}_1 \times \dots \times \mathcal{X}_N, \mathcal{Y} = \mathcal{Y}_1 \times \dots \times \mathcal{Y}_N$, $\mathcal Z = \mathcal Z_1 \times \dots \times \mathcal Z_N$ for Polish path spaces with metrics $d_{\X}, d_\Y, d_\Z$, respectively.
We denote by $C(\mathcal{X})$ (resp.~$C_b(\mathcal{X})$) the continuous (resp.~and bounded) functions mapping $\mathcal{X}$ to $\mathbb{R}$. Further, we say that a function $c : \X \times \Y \rightarrow \mathbb{R}$ has growth of order at most $p$ whenever $\frac{c}{1 + d_\X(\hat{x}, \cdot)^p + d_\Y(\hat{y}, \cdot)^p}$ is bounded for some $(\hat{x}, \hat{y}) \in \X \times \Y$. 
Moreover, let $L^\infty(\X)$ be the set of bounded and measurable functions mapping $\X$ to $\mathbb{R}$, and let $\mathcal{P}(\X)$ be the set of Borel probability measures on $\X$, and $\mathcal{P}_p(\X) \subset \mathcal{P}(\X)$ the measures with finite $p$-th moment (integrating $d_\X(\hat{x}, \cdot)^p$).
For all definitions stated above, one can replace $\mathcal X$ with any other Polish space.
We denote by $\Pi(\mu, \nu)$ the set of couplings between $\mu \in \mathcal{P}(\X)$ and $\nu \in \mathcal{P}(\Y)$, that is the set of probability measures on $\X \times \Y$ with first marginal $\mu$ and second marginal $\nu$, and we will always use the notation $P = \mu \otimes \nu$.

Since adapted transport problems involve disintegrations and taking marginals, we will take some time to introduce a proper notation:

First, for measures on $\X$ or $\Y$ (we give the notation for $\X$, while the corresponding versions for $\Y$ are analogous): Let $A \subset \{1, \dots, N\}$ and $\mu \in \mathcal{P}(\X)$. 
Define $\X_A := \otimes_{t\in A} \X_t$ and $X_A$ as the projection of $\X$ onto $\X_A$. 
Define $\mu_A := (X_A)_* \mu$, where $_*$ denotes the pushforward operator. 
That means, $\mu_A$ is the marginal of $\mu$ on the coordinates included in $A$. 
If $A = \{t\}$ we will write $\mu_A = \mu_t$ and if $A = \{1, \dots, t\}$ we will write $\mu_A = \mu_{1:t}$, and similarly with $X_t$, $X_{1:t}$, etc.
For the disintegration, we use the notation $\mu = \mu_A \otimes \mu^{X_A}$, where $\mu^{X_A} : \X_A \rightarrow \mathcal{P}(X_{\{1, \dots, N\}\backslash A})$ is a stochastic kernel. For $x_A \in \X_A$ we will abuse notation slightly and write $\mu^{x_A} := \mu^{X_A}(x_A)$. Finally for $B \subset \{1, \dots, N\}\backslash A$, we define $\mu^{X_A}_B := (X_B)_* (\mu^{X_A})$ (which is a stochastic kernel mapping $\X_A \rightarrow \mathcal{P}(\X_B)$), where the same notation $\mu^{x_A}_B = \mu^{X_A}_B(x_A)$ applies. The standard disintegration thus reads
\begin{align}
\label{eq:mu_disint}
\mu = \mu_1 \otimes \mu_2^{X_1} \otimes \mu_3^{X_{1:2}} \otimes \dots \otimes \mu_N^{X_{1:N-1}}.
\end{align}
Now, for joint distributions. Let $A, B \subset \{1, \dots, N\}$ and $\pi \in \mathcal{P}(\X \times \Y)$.
With slight abuse of notation, we regard $X_A$ and $Y_B$ also as projections mapping from $\X \times \Y$ onto the respective coordinates in $\X$ and $\Y$. 
Then, we define $\pi_{A, B} := (X_A, Y_B)_* \pi$ and as previously $\pi_{s, t}$ for $A = \{s\}, B = \{t\}$ and $\pi_{1:s, 1:t}$ for $A = \{1, \dots, s\}, B = \{1, \dots, t\}$.
For the disintegration, we write $\pi = \pi_{A, B} \otimes \pi^{X_{A}, Y_{B}}$ and for $A_2 \subset \{1, \dots, N\}\backslash A$ and $B_2 \subset \{1, \dots, N\}\backslash B$ we define $\pi^{X_A, Y_B}_{A_2, B_2} := (X_{A_2}, Y_{B_2})_* (\pi^{X_A, Y_B})$ (which is a stochastic kernel mapping $\X_A \times \Y_B$ onto $\mathcal{P}(\X_{A_2} \times \Y_{B_2})$. 
The same convention as previously applies when plugging values $x_A, y_B$ into the kernels. 
The special case $A \neq \emptyset = B$ (and similarly when the roles of $A$ and $B$ are reversed) is treated as follows: instead of writing the empty set we will use $0$ and set $(X_A,Y_B) := (X_A, Y_0) := X_A$.
For example, then $\pi_{A,0}$ denotes the law of $X_A$ and $\pi_{1:N,0}$ denotes the $X$-marginal of $\pi$.
Similarly, we write $\pi^{X_A,Y_B} = \pi^{X_A,Y_0}$ for the disintegration of $\pi$ w.r.t.\ $X_A$.
With these conventions in mind, the standard disintegration reads
\begin{align}
\pi &= \pi_{1, 1} \otimes \pi_{2, 2}^{X_1, Y_1} \otimes \pi_{3, 3}^{X_{1:2}, Y_{1:2}} \otimes \dots \otimes \pi_{N, N}^{X_{1:N-1}, Y_{1:N-1}} \nonumber\\
&= (\pi_{1, 0} \otimes \pi_{0, 1}^{X_1, Y_{0}}) \otimes (\pi_{2, 0}^{X_1, Y_1} \otimes \pi_{0, 2}^{X_{1:2}, Y_1}) \otimes \dots \otimes (\pi_{N, 0}^{X_{1:N-1}, Y_{1:N-1}} \otimes \pi_{0, N}^{X_{1:N}, Y_{1:N-1}}).
\label{eq:disint_temp}
\end{align}
Another disintegration which is perhaps more intuitive when considering $\pi$ as a randomized transport from $\X$ to $\Y$ would be
\begin{align}
\pi &= \pi_{1:N, 0} \otimes \pi_{0, 1:N}^{X_{1:N}, Y_0} \nonumber\\
&= (\pi_{1, 0} \otimes \pi_{2, 0}^{X_{1}, Y_0} \otimes \dots \otimes \pi_{N, 0}^{X_{1:N-1}, Y_0}) \otimes (\pi_{0, 1}^{X_{1:N}, Y_0} \otimes \pi_{0, 2}^{X_{1:N}, Y_1} \otimes \dots \otimes \pi_{0, N}^{X_{1:N}, Y_{1:N-1}}).
\label{eq:disint_transport}
\end{align}

Let $\mu \in \mathcal P(\X)$ and let $K \colon \X \to \mathcal P(\Y)$ be a measurable kernel.
Then we write $\mu K \in \mathcal P(\Y)$ for the measures that satisfies, for all $f \in C_b(\Y)$,
\begin{equation} \label{eq:def.muK} \int f(y) \, \mu K(dy) = \int f(y) \, \mu \otimes K(x)(dy).  \end{equation}
In order to reduce the number of brackets we will frequently write $K(x,dy)$ instead of $K(x)(dy)$. 

\subsection{Adapted optimal transport and its entropic versions}

\begin{definition}
	\label{def:causality}
	A measure $\pi \in \mathcal P(\X \times \Y)$ is called causal if, for all $t \in \{1,\ldots,N-1\}$, we have $\pi$-almost surely
	\begin{equation}
	\label{def:causal_property}
	\pi_{t+1,0}^{X_{1:t},Y_{1:t}} = \pi_{t+1,0}^{X_{1:t},Y_{0}},
	\end{equation}
	or equivalently, we have under $\pi$ the following conditional independence
	\begin{equation}
	\label{def:causal_property_random_variables}
	X_{t + 1} \indep_{X_{1:t}} Y_{1:t}.
	\end{equation}
	The set of all causal probability measures is denoted by $\mathcal{P}_c(\X \times \Y)$.
	Analogously, the set of anticausal measures $\mathcal{P}_{ac}(\X \times \Y)$ is defined as the set of measures $\pi \in \mathcal{P}(\X \times \Y)$ such that $\pi_{0, t+1}^{X_{1:t}, Y_{1:t}} = \pi_{0, t+1}^{X_0, Y_{1:t}}$ holds $\pi$ almost surely for all $t=1, \dots, N-1$. The set of bicausal measures $\mathcal{P}_{bc}(\X \times \Y)$ is defined as $\mathcal{P}_{bc}(\X \times \Y) := \mathcal{P}_c(\X \times \Y) \cap \mathcal{P}_{ac}(\X \times \Y)$. 
\end{definition}

Note that when $\mu$ is the $\X$-marginal of $\pi$, the right-hand side of \eqref{def:causal_property} coincides in fact $\pi$-a.s.\ with $\mu_{t+1}^{X_{1:t}}$.
In particular, the kernel $\pi^{X_{1:t},Y_{1:t}}_{t+1,0}$ does not depend on $Y_{1:t}$.

\begin{remark}
	\label{rem:equivdefcausal}
	By the chain rule of conditional independence \cite[Proposition 5.8]{Ka97}, we have
	\[
	X_{t} \indep_{X_{1:t-1}} Y_{1:t-1} \text{ and } X_{t+1} \indep_{X_{1:t}} Y_{1:t-1} \iff X_{t:t+1} \indep_{X_{1:t-1}} Y_{1:t-1}.
	\]
	Thus, this reasoning yields that $\pi \in \mathcal P_c(\X \times \Y)$ if and only if, for $t \in \{1,\ldots, N-1\}$,
	\begin{equation}
	\label{def:causal_property_random_variables_entire_path}
	X \indep_{X_{1:t}} Y_{1:t} \quad \text{under } \pi,
	\end{equation}
	 or equivalently, we have $\pi$-almost surely
	\begin{equation}
	\label{def:causal_property_kernel_entire_path}
	\pi_{0,t}^{X_{1:N},Y_0} = \pi_{0,t}^{X_{1:t},Y_0}.
	\end{equation}
\end{remark}

Let $c : \X \times \Y \rightarrow \mathbb{R}$ be continuous.
Define the set of causal, anticausal and bicausal couplings between two measures $\mu \in \mathcal{P}(\X)$ and $\nu \in \mathcal{P}(\Y)$ as $\Pi_c(\mu, \nu) := \Pi(\mu, \nu) \cap \mathcal{P}_c(\X \times \Y)$, $\Pi_{ac}(\mu, \nu) := \Pi(\mu, \nu) \cap \mathcal{P}_{ac}(\X \times \Y)$ and $\Pi_{bc}(\mu, \nu) := \Pi(\mu, \nu) \cap \mathcal{P}_{bc}(\X \times \Y)$, respectively. Whenever $p \in [1, \infty)$ is specified, $c$ is assumed to have growth of order at most $p$ and $\mu \in \mathcal{P}_p(\X), \nu \in \mathcal{P}_p(\X)$.

Whenever a statement is the same for the regular, causal, anticausal or bicausal version, we will use an index $\bullet$ as a placeholder. 
For example, $\Pi_\bullet(\mu, \nu)$ can mean either of $\Pi(\mu, \nu)$, $\Pi_c(\mu, \nu)$, $\Pi_{ac}(\mu, \nu)$, $\Pi_{bc}(\mu, \nu)$.
We define the (adapted) optimal transport problem as 
\[
V_{\bullet}(\mu, \nu, c) := \inf_{\pi \in \Pi_{\bullet}(\mu, \nu)} \int c\,d\pi.
\]

We denote by $D_{\rm KL}(\pi, \tilde{\pi})$ the relative entropy between $\pi, \tilde{\pi} \in \mathcal{P}(\X \times \Y)$ given by $D_{\rm KL}(\pi, \tilde{\pi}) = \int \log(\frac{d\pi}{d\tilde{\pi}}) \,d\pi$ if $\pi \ll \tilde{\pi}$ (i.e., if $\pi$ is absolutely continuous w.r.t.~$\tilde{\pi}$) and $D_{\rm KL}(\pi, \tilde{\pi}) = \infty$, else.
For $\varepsilon > 0$, we define the entropic versions of the (adapted) optimal transport problem by
\[
E^{\varepsilon}_{\bullet}(\mu, \nu, c) := \inf_{\pi \in \Pi_{\bullet}(\mu, \nu)} \int c\,d\pi + \varepsilon D_{\rm KL}(\pi, P).
\]
If $\varepsilon = 1$, we will simply write $E_{\bullet}(\mu, \nu, c)$.
\begin{remark}[Existence] \label{rem:existence_optimizers}
We remark that, when the cost $c$ is lower semicontinuous and lower-bounded, existence of optimizers to the problems $E_\bullet^\varepsilon$ and $V_\bullet$ follows by standard arguments due to weak compactness of $\Pi_\bullet(\mu,\nu)$, c.f.\ Lemma \ref{lem:all sets of couplings are compact}.
Moreover, by strict convexity of the relative entropy the optimizer of $E_\bullet^\varepsilon$ is unique.
\end{remark}

\section{Stability}
\label{sec:stab}
For $\mu, \tilde{\mu} \in \mathcal{P}_p(\X)$ and $p\in [1, \infty)$ we define the (adapted) versions of the Wasserstein distance $W_{p, \bullet}$ by
\[
W_{p,\bullet}(\mu, \tilde{\mu})^p := \inf_{\pi \in \Pi_{\bullet}(\mu, \tilde{\mu})} 
\int d_{\X}^p \,d\pi,
\]
and similarly on $\Y$. For $\pi, \tilde{\pi} \in \mathcal{P}(\X \times \Y)$, the (adapted) Wasserstein distance is defined by
\[
W_{p, \bullet}(\pi, \tilde{\pi})^p := \inf_{\theta \in \Pi_{\bullet}(\pi, 
	\tilde{\pi})} \int d_{\X}(x, \tilde{x})^p + d_{\Y}(y, \tilde{y})^p \,\theta(dx, dy, 
d\tilde{x}, d\tilde{y}),
\]
For the definition of causality for $\theta \in \mathcal{P}((\X \times \Y) \times (\X \times \Y))$, we regard the spaces $(\X \times \Y)$ as a single space, and the usual definition, that is Definition \ref{def:causality}, applies. For disintegrations of $\theta \in \mathcal{P}((\X \times \Y) \times (\X \times \Y))$ we use the notation $\theta_{A, B, C, D}$ for marginals, $\theta^{X_A, Y_B, \tilde{X}_A, \tilde{Y}_B}_{A_2, B_2, C_2, D_2}$ for disintegrations, etc. For instance, $\theta$ being causal means that 
\[
\theta_{t+1, t+1, 0, 0}^{X_{1:t}, Y_{1:t}, \tilde{X}_{1:t}, \tilde{Y}_{1:t}} = \theta_{t+1, t+1, 0, 0}^{X_{1:t}, Y_{1:t}, \tilde{X}_0, \tilde{Y}_0},
\]
for all $t=1, \dots, N-1$.


\subsection{Adapted version of the shadow}
\label{subsec:shadow}
The convolution of two stochastic kernels $K: \X \to \mathcal{P}(Y)$ and $\tilde{K}: \Y \to \mathcal{P}(\Z)$ is given by the (unique) kernel $K \ast \tilde K \colon \X \to \mathcal{P}(Z)$ satisfying
\[
\int f(z) \, K \ast \tilde K(x,dz) = \int \int f(z) \, \tilde K(y,dz) \, K(x,dy),
\]
for $f \in C_b(\Z)$ and $x \in \X$.

Concatenation of different couplings will be a recurrent tool throughout this paper. The following Lemma establishes the key property that concatenation of causal couplings remains causal.
\begin{lemma}
	\label{lem:concat}
	Let $\mu \in \mathcal{P}(\X), \nu \in \mathcal{P}(\Y), \gamma \in \mathcal{P}(\Z)$.
	Let $\pi \in \Pi_c(\mu, \nu), \tilde{\pi} \in \Pi_c(\nu, \gamma)$ where we write $\pi = \mu \otimes K$ and $\tilde{\pi} = \nu \otimes \tilde{K}$.
	Then
	\begin{equation}
	\label{eq:concat_well_defined}
	\mu \otimes (K \ast \tilde K) \in \Pi_c(\mu, \gamma).
	\end{equation}
	\begin{proof}
		Let $\theta := \mu \otimes (K * \tilde{K})$.
		Let $f \in C_b(\X \times \Z)$ and note that by Fubini's theorem
		\begin{align*}
		\int f \, d\theta
		&=
		\int \int \int f(x,z) \, \tilde K(y,dz) \, K(x,dy) \, \mu(dx) \\
		&=
		\int \int f(x,z) \, \tilde K(y,dz) \, \pi(dx,dy).
		\end{align*}
		Similarly to the above calculation, it is straightforward to see that $\theta \in \Pi(\mu,\gamma)$.
		
		It remains to show that $\theta \in \Pi_c(\mu,\gamma)$.
		We write $\hat \gamma := \mu \otimes K \otimes \tilde K$.
		Since $\theta = (X,Z)_\ast \hat \gamma$,  the causality of $\theta$ is equivalent to the following:
		For $t \in \{1, \dots, N-1\}$, we have
		\begin{equation}
		\label{eq:hat_gamma_causality_XZ}
		X \indep_{X_{1:t}} Z_{1:t} \quad \text{under }\hat \gamma.
		\end{equation}
		By definition of $\hat \gamma$, we have $\pi$-almost surely that $\hat \gamma^{x,y}(dz) = \tilde K(y,dz)$, thus,
		\begin{equation}
		\label{eq:hat_gamma_CI_product}
		X \indep_Y Z\quad \text{under }\hat \gamma.
		\end{equation}
		Let $t \in \{1,\ldots, N-1\}$.
		Since $(Y,Z)_\ast \hat \gamma = \tilde \pi$ we have, by causality of the latter,
		\begin{equation}
		\label{eq:hat_gamma_causality_YZ}
		Y \indep_{Y_{1:t}} Z_{1:t} \quad \text{under }\hat \gamma.
		\end{equation}
		Using \eqref{eq:hat_gamma_CI_product} and \eqref{eq:hat_gamma_causality_YZ} we find by the chain rule \cite[Proposition 5.8]{Ka97} that
		\begin{equation}
		\label{eq:hat_gamma_causality_XZ_tmp}
		(X, Y) \indep_{Y_{1:t}} Z_{1:t} \text{ and thus } X \indep_{(X_{1:t}, Y_{1:t})} Z_{1:t} \quad \text{under }\hat \gamma.
		\end{equation}
		Similarly, we have due to $(X,Y)_\ast \hat \gamma = \pi$ and $\pi \in \mathcal P_c(\X \times \Y)$ that
		\begin{equation}
		\label{eq:hat_gamma_causality_XY}
		X \indep_{X_{1:t}} Y_{1:t} \quad \text{under }\hat\gamma.
		\end{equation}
		Finally, we find that \eqref{eq:hat_gamma_causality_XZ_tmp} and \eqref{eq:hat_gamma_causality_XY} imply \eqref{eq:hat_gamma_causality_XZ} by the chain rule \cite[Proposition 5.8]{Ka97}, which concludes the proof.
	\end{proof}

\end{lemma}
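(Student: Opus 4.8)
The plan is to lift the problem to the three-fold product space $\X \times \Y \times \Z$. Set $\hat\gamma := \mu \otimes K \otimes \tilde K \in \mathcal P(\X \times \Y \times \Z)$ and $\theta := \mu \otimes (K \ast \tilde K)$. First I would check, by Fubini and the definition of the kernel convolution, that $\theta$ is exactly the $(X,Z)$-marginal of $\hat\gamma$; the same unfolding shows $\theta \in \Pi(\mu,\gamma)$, since the $Z$-marginal of $\theta$ equals the $Z$-marginal of $(Y,Z)_\ast\hat\gamma = \tilde\pi$, which is $\gamma$. It is also worth recording at this point that $K \ast \tilde K$, hence $\theta$, is independent of the chosen regular versions of $K$ and $\tilde K$ (changing $K$ on a $\mu$-null set or $\tilde K$ on a $\nu$-null set does not affect the integrals defining $\theta$).

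Since $\theta = (X,Z)_\ast\hat\gamma$, causality of $\theta$ in the sense of Definition \ref{def:causality} is equivalent to the conditional independences $X \indep_{X_{1:t}} Z_{1:t}$ under $\hat\gamma$ for $t = 1,\dots,N-1$. I would derive these from three elementary facts read off the structure of $\hat\gamma$ and the hypotheses: (i) because $\hat\gamma^{x,y} = \tilde K(y,\cdot)$ for $\pi$-a.e.\ $(x,y)$, the $Z$-coordinate depends on $(X,Y)$ only through $Y$, i.e.\ $X \indep_Y Z$ under $\hat\gamma$; (ii) since $(Y,Z)_\ast\hat\gamma = \tilde\pi \in \Pi_c(\nu,\gamma)$, we have $Y \indep_{Y_{1:t}} Z_{1:t}$ under $\hat\gamma$; (iii) since $(X,Y)_\ast\hat\gamma = \pi \in \Pi_c(\mu,\nu)$, we have $X \indep_{X_{1:t}} Y_{1:t}$ under $\hat\gamma$.

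The remaining step is conditional-independence bookkeeping via the chain rule \cite[Proposition 5.8]{Ka97}. Combining (i) and (ii) yields $(X,Y) \indep_{Y_{1:t}} Z_{1:t}$, whence $X \indep_{(X_{1:t},Y_{1:t})} Z_{1:t}$; feeding this together with (iii) into the chain rule once more gives $X \indep_{X_{1:t}} Z_{1:t}$, which is the desired causality of $\theta$. I expect the one genuinely delicate point to be precisely this last manipulation: ensuring each application of the chain rule is used in the correct direction (these are the contraction / weak-union type identities for conditional independence), and that fact (i) is invoked at the right level of generality, namely conditioning on all of $Y$ rather than only on $Y_{1:t}$. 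For $N > 2$ nothing changes, since all statements above are already formulated for general $t$; the case $N = 2$ is the instance $t = 1$.
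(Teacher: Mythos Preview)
Your proposal is correct and follows essentially the same route as the paper: lift to $\hat\gamma = \mu \otimes K \otimes \tilde K$, record the three conditional independences $X \indep_Y Z$, $Y \indep_{Y_{1:t}} Z_{1:t}$, $X \indep_{X_{1:t}} Y_{1:t}$, and combine them via two applications of the chain rule \cite[Proposition 5.8]{Ka97} to obtain $X \indep_{X_{1:t}} Z_{1:t}$. Your additional remark on independence from the choice of disintegration versions is a harmless extra that the paper omits from its main argument.
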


Using the previous Lemma together with Minkowski's inequality immediately yields the following:
\begin{corollary}[To Lemma \ref{lem:concat}]
	\label{cor:triangle}
	Let $\mu, \nu, \gamma \in \mathcal{P}(\X)$. Then $W_{p, c}(\mu, \gamma) \leq W_{p, c}(\mu, \nu) + W_{p, c}(\nu, \gamma)$.
\end{corollary}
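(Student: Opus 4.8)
The plan is to combine the causality–preservation statement of Lemma \ref{lem:concat} with the standard gluing construction from classical optimal transport, exactly as the phrasing ``using the previous Lemma together with Minkowski's inequality'' suggests. First I would fix $\mu, \nu, \gamma \in \mathcal{P}_p(\X)$ (the statement is only meaningful for marginals of finite $p$-th moment) and choose causal couplings $\pi^1 \in \Pi_c(\mu,\nu)$ and $\pi^2 \in \Pi_c(\nu,\gamma)$; by Remark \ref{rem:existence_optimizers} applied with the continuous, nonnegative cost $d_\X^p$, these may be taken to be optimizers of the problems defining $W_{p,c}(\mu,\nu)$ and $W_{p,c}(\nu,\gamma)$ (alternatively, one works with $\varepsilon$-optimal couplings and lets $\varepsilon \downarrow 0$ at the end, which avoids invoking existence). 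Writing $\pi^1 = \mu \otimes K$ and $\pi^2 = \nu \otimes \tilde K$, Lemma \ref{lem:concat} — applied with $\Y = \Z = \X$ — gives $\theta := \mu \otimes (K \ast \tilde K) \in \Pi_c(\mu,\gamma)$, so $\theta$ is admissible in the infimum defining $W_{p,c}(\mu,\gamma)$.

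Next I would estimate $\int d_\X^p\,d\theta$ via the glued measure $\hat\gamma := \mu \otimes K \otimes \tilde K \in \mathcal{P}(\X \times \X \times \X)$ on coordinates $(X,Y,Z)$, which by the computation already carried out in the proof of Lemma \ref{lem:concat} has $(X,Y)$-marginal $\pi^1$, $(Y,Z)$-marginal $\pi^2$, and $(X,Z)$-marginal $\theta$. The pointwise triangle inequality $d_\X(X,Z) \le d_\X(X,Y) + d_\X(Y,Z)$, valid $\hat\gamma$-a.s., together with Minkowski's inequality in $L^p(\hat\gamma)$ yields
\[
\Big(\int d_\X^p\,d\theta\Big)^{1/p} \;\le\; \Big(\int d_\X^p\,d\pi^1\Big)^{1/p} + \Big(\int d_\X^p\,d\pi^2\Big)^{1/p}.
\]
Since $\theta \in \Pi_c(\mu,\gamma)$ the left-hand side dominates $W_{p,c}(\mu,\gamma)$, while by the choice of $\pi^1,\pi^2$ as optimizers the right-hand side equals $W_{p,c}(\mu,\nu) + W_{p,c}(\nu,\gamma)$; this is the asserted inequality.

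There is essentially no obstacle here — this genuinely is an immediate corollary, as the surrounding text indicates. The only points needing (minor) care are: that $\hat\gamma$ has the three claimed bivariate marginals, which is precisely the identity $\int f\,d\theta = \int\int f(x,z)\,\tilde K(y,dz)\,\pi^1(dx,dy)$ established in the proof of Lemma \ref{lem:concat}; and that the coupling $\theta$ (hence its causality) does not depend on the particular regular versions of the kernels $K,\tilde K$, so the construction is well defined. Both are already handled by the preceding lemma, so the write-up should be short.
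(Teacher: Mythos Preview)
Your proposal is correct and matches the paper's intended argument exactly: the paper gives no proof beyond the sentence ``Using the previous Lemma together with Minkowski's inequality immediately yields the following,'' and your write-up spells out precisely that — glue optimal causal couplings via Lemma~\ref{lem:concat}, identify the three bivariate marginals of $\hat\gamma = \mu \otimes K \otimes \tilde K$, and apply Minkowski in $L^p(\hat\gamma)$.
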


\begin{definition}[Shadow]
	\label{def:shadow}
	Let $\mu, \tilde\mu \in \mathcal{P}(\X), \nu, \tilde{\nu} \in \mathcal{P}(\Y)$.
	Let $\pi \in \Pi(\mu, \nu)$, $\gamma \in \Pi(\mu, \tilde{\mu})$ and $\gamma' \in \Pi(\nu, \tilde{\nu})$ where we write $\gamma = \mu \otimes K^{\mu}$ and $\gamma' = \nu \otimes K^\nu$.
	Define 
	\[ K\colon \X \times \Y \to \mathcal P(\X \times \Y) \colon (x,y) \mapsto K^\mu(x) \otimes K^\nu(y). \] 
	Then we call $\tilde{\pi} := \pi K \in \Pi(\tilde{\mu}, \tilde{\nu})$, c.f.\ \eqref{eq:def.muK}, the \textsl{$(K^\mu, K^\nu)$-shadow} of $\pi$.
\end{definition}

\begin{lemma}[Properties of the shadow]
	\label{lem:prop_shadow} Let $p \in [1, \infty)$.
	Let $\mu, \tilde\mu \in \mathcal{P}_p(\X), \nu, \tilde{\nu} \in \mathcal{P}_p(\Y)$, $\pi \in \Pi(\mu, \nu)$, $\gamma \in \Pi(\mu, \tilde{\mu})$ and $\gamma' \in \Pi(\nu, \tilde{\nu})$ where we write $\gamma = \mu \otimes K^{\mu}$ and $\gamma' = \nu \otimes K^\nu$.
	Let $\tilde{\pi}$ be the $(K^\mu, K^\nu)$-shadow of $\pi$  and denote by $C_{K^\mu} := \int d_\X(x, \tilde{x})^p \,\mu \otimes K^\mu(dx, d\tilde{x})$ the transport cost between $\mu$ and $\tilde{\mu}$ associated with the coupling $K^\mu$, and similarly $C_{K^\nu}$.
	\begin{itemize}
		\item[(i)] It holds
		\[
		\int\int d_{\X}^p(x, \tilde{x})+d_{\Y}^p(y, \tilde{y}) \,K(x, y)(d\tilde{x}, d\tilde{y}) \, \pi(dx, dy) = C_{K^\mu} + C_{K^\nu}.
		\]
		\item[(ii)] For any other coupling $\kappa \in \Pi(\mu, \nu)$ and its $(K^\mu, K^\nu)$-shadow $\tilde{\kappa}$ it holds
		\[
		D_{\rm KL}(\tilde{\pi}, \tilde\kappa) \leq D_{\rm KL}(\pi, \kappa).
		\]
		In particular, since $\tilde\mu \otimes \tilde{\nu}$ is the $(K^\mu, K^\nu)$-shadow of $P$, it holds $D_{\rm KL}(\tilde{\pi}, \tilde{\mu}\otimes\tilde{\nu}) \leq D_{\rm KL}(\pi, P)$
		\item[(iii)] If $\pi$ is causal, $\mu \otimes K^\mu$ is anticausal and $\nu \otimes K^\nu$ is causal, then $\tilde{\pi}$ is causal. If $\mu \otimes K^\mu$ and $\nu \otimes K^\nu$ are $W_{p,ac}(\mu, \tilde{\mu})$- and $W_{p,c}(\nu, \tilde{\nu})$-optimal, then $W_p(\pi, \tilde{\pi})^p \leq W_{p,ac}(\mu, \tilde{\mu})^p + W_{p,c}(\nu, \tilde{\nu})^p.$
		\item[(iv)] If $\pi$ is anticausal, $\mu \otimes K^\mu$ is causal and $\nu \otimes K^\nu$ is anticausal, then $\tilde{\pi}$ is anticausal. If $\mu \otimes K^\mu$ and $\nu \otimes K^\nu$ are $W_{p, c}(\mu, \tilde{\mu})$- and $W_{p, ac}(\nu, \tilde{\nu})$-optimal, then $W_p(\pi, \tilde{\pi})^p \leq W_{p, c}(\mu, \tilde{\mu})^p + W_{p, ac}(\nu, \tilde{\nu})^p.$
		\item[(v)] Let $\theta = \pi \otimes K \in \Pi(\pi, \tilde{\pi})$. If $\mu \otimes K^\mu$ and $\nu \otimes K^\nu$ are causal, then $\theta \in \Pi_c(\pi, \tilde{\pi})$. If $\mu \otimes K^\mu$ and $\nu \otimes K^\nu$ are $W_{p, c}(\mu, \tilde{\mu})$- and $W_{p, c}(\nu, \tilde{\nu})$-optimal, then $W_{p, c}(\pi, \tilde{\pi})^p \leq W_{p, c}(\mu, \tilde{\mu})^p + W_{p, c}(\nu, \tilde{\nu})^p.$
	\end{itemize}
	\begin{proof}
		The first property (i) is clear by definition of $K$. Property (ii) follows immediately from the data processing inequality (cf.~\cite[Lemma 4.1]{eckstein2021quantitative}).
		
		(iii), (iv) are obviously equivalent and we show only (iii).
		 Write $K^{\tilde \mu}$ for the disintegration kernel of $\gamma$ w.r.t.\ the second coordinate.
		Since $\gamma = \mu \otimes K^\mu$ is anticausal, we find that $\tilde \mu \otimes K^{\tilde \mu}$ is causal. 
		Write $\pi = \mu \otimes S$. Then, we find that
		\[
		\theta = \pi \otimes K = (\mu \otimes S) \otimes K^\mu \otimes K^\nu = (\mu \otimes K^\mu) \otimes S \otimes K^\nu = \tilde{\mu} \otimes K^{\tilde{\mu}} \otimes S \otimes K^\nu,
		\]
		That is, $\theta$ can be defined as a concatenation of causal kernels in such an order that $\tilde{\pi}$ is the joint first and last marginal. 
		 I.e., $\tilde{\pi} = \tilde{\mu} \otimes (K^{\tilde{\mu}} \ast S \ast K^\nu)$ and by Lemma \ref{lem:concat} it thus follows that $\tilde{\pi} \in \Pi_c(\tilde\mu, \tilde{\nu})$. The second claim in (iii) follows from (i).
		
		(v): We have to show that $\theta_{t+1, t+1, 0, 
			0}^{X_{1:t}, Y_{1:t}, \tilde{X}_{1:t}, \tilde{Y}_{1:t}} = 
		\theta_{t+1, t+1, 0, 0}^{X_{1:t}, Y_{1:t}, 
			\tilde{X}_0, \tilde{Y}_0}$ for all $t=1, \dots, N$. Let us 
		denote by $\alpha = \mu\otimes K^\mu$, where $K^\mu = \alpha_{0, 
			1:N}^{X_{1:N}, \tilde{X}_0}$ and note that by causality 
		$\alpha_{0, t}^{X_{1:N}, \tilde{X}_0} = \alpha_{0, 
			t}^{X_{1:t}, \tilde{X}_0}$, c.f.\ Remark \ref{rem:equivdefcausal}, and the same for $\beta = \nu \otimes K^\nu$. 
		Thus $\theta$ can be disintegrated as
		\[
		\theta = \pi \otimes K = \pi_{1:t, 1:t} \otimes \alpha_{0, 1:t}^{X_{1:t}, \tilde{X}_0} \otimes \beta_{0, 1:t}^{Y_{1:t}, \tilde{Y}_0} \otimes \pi_{t+1, t+1}^{X_{1:t}, Y_{1:t}} \otimes (...)
		\]
		where the terms in (...) are irrelevant for $\theta_{1:t+1, 1:t+1, 1:t, 1:t}$ and thus
		\[
		\theta_{1:t+1, 1:t+1, 1:t, 1:t} = \pi_{1:t, 1:t} \otimes \alpha_{0, 1:t}^{X_{1:t}, \tilde{X}_0} \otimes \beta_{0, 1:t}^{Y_{1:t}, \tilde{Y}_0} \otimes \pi_{t+1, t+1}^{X_{1:t}, Y_{1:t}} = \theta_{1:t, 1:t, 1:t, 1:t} \otimes \pi_{t+1, t+1}^{X_{1:t}, Y_{1:t}},
		\]
		which implies $\theta_{t+1, t+1, 0, 0}^{X_{1:t}, Y_{1:t}, \tilde{X}_{1:t}, \tilde{Y}_{1:t}} = \pi_{t+1, t+1}^{X_{1:t}, Y_{1:t}}$, which shows the claim.
	\end{proof}
\end{lemma}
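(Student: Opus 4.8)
The plan is to dispatch (i) and (ii) by direct computation and to reduce (iii)--(v) to Lemma~\ref{lem:concat} by exhibiting the relevant joint law as a concatenation of causal kernels taken in a suitable order.

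For (i) I would simply expand $K(x,y)=K^\mu(x)\otimes K^\nu(y)$ in the double integral; since $\pi\in\Pi(\mu,\nu)$, integrating out the irrelevant block in each summand splits it as $\int d_\X^p\,d(\mu\otimes K^\mu)+\int d_\Y^p\,d(\nu\otimes K^\nu)=C_{K^\mu}+C_{K^\nu}$. For (ii), the point is that $\tilde\pi=\pi K$ and $\tilde\kappa=\kappa K$ are obtained from $\pi$ and $\kappa$ by applying the \emph{same} Markov kernel $K$, so the data processing inequality for relative entropy (\cite[Lemma~4.1]{eckstein2021quantitative}) gives $D_{\rm KL}(\pi K,\kappa K)\le D_{\rm KL}(\pi,\kappa)$; the special case $\kappa=P$ uses that $PK=\tilde\mu\otimes\tilde\nu$, because the integration against the product kernel $K^\mu(x)\otimes K^\nu(y)$ factorizes when it acts on a product measure.

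For (iii), write $\pi=\mu\otimes S$ with $S\colon\X\to\mathcal P(\Y)$ and let $K^{\tilde\mu}$ be the disintegration of $\gamma=\mu\otimes K^\mu$ with respect to its \emph{second} coordinate, so that $\gamma=\tilde\mu\otimes K^{\tilde\mu}$ and anticausality of $\gamma$ is exactly causality of $\tilde\mu\otimes K^{\tilde\mu}$. Since $K^\mu(x)$ does not see $y$, the joint law $\theta=\pi\otimes K$ can be reordered as $\theta=(\mu\otimes K^\mu)\otimes S\otimes K^\nu=\tilde\mu\otimes K^{\tilde\mu}\otimes S\otimes K^\nu$, a chain of three causal kernels $\X\to\X\to\Y\to\Y$ whose joint first-and-last marginal is $\tilde\pi$; two applications of Lemma~\ref{lem:concat} then give $\tilde\pi=\tilde\mu\otimes(K^{\tilde\mu}\ast S\ast K^\nu)\in\Pi_c(\tilde\mu,\tilde\nu)$. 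For the Wasserstein estimate, $\theta\in\Pi(\pi,\tilde\pi)$ is an admissible coupling for $W_p(\pi,\tilde\pi)$ (no causality of $\theta$ is needed here), so (i) yields $W_p(\pi,\tilde\pi)^p\le C_{K^\mu}+C_{K^\nu}$, which equals $W_{p,ac}(\mu,\tilde\mu)^p+W_{p,c}(\nu,\tilde\nu)^p$ for optimal kernels; statement (iv) is the mirror image obtained by interchanging the roles of causality and anticausality.

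For (v), causality of $\theta$ is understood with $\X\times\Y$ viewed as a single path space, so I would show $(X_{t+1},Y_{t+1})\indep_{(X_{1:t},Y_{1:t})}(\tilde X_{1:t},\tilde Y_{1:t})$ under $\theta$ for every $t$. When $\alpha=\mu\otimes K^\mu$ and $\beta=\nu\otimes K^\nu$ are causal, Remark~\ref{rem:equivdefcausal} gives that $\alpha_{0,1:t}^{X_{1:N},\tilde X_0}$ depends only on $x_{1:t}$, and similarly for $\beta$; disintegrating $\theta=\pi\otimes K^\mu\otimes K^\nu$ and moving the $(\tilde X_{1:t},\tilde Y_{1:t})$ blocks to the front then yields $\theta_{1:t+1,1:t+1,1:t,1:t}=\theta_{1:t,1:t,1:t,1:t}\otimes\pi_{t+1,t+1}^{X_{1:t},Y_{1:t}}$, so the conditional law of $(X_{t+1},Y_{t+1})$ given the time-$t$ past --- including $(\tilde X_{1:t},\tilde Y_{1:t})$ --- is $\pi_{t+1,t+1}^{X_{1:t},Y_{1:t}}$, which does not depend on $(\tilde X_{1:t},\tilde Y_{1:t})$; this is precisely causality. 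Since $\theta$ is now a causal coupling of $\pi$ and $\tilde\pi$, (i) once more gives $W_{p,c}(\pi,\tilde\pi)^p\le C_{K^\mu}+C_{K^\nu}$, and optimal kernels conclude. I expect the disintegration bookkeeping in (iii) and (v) --- legitimizing the reordering of the kernels and checking that the resulting concatenation has the advertised first and last marginals so that Lemma~\ref{lem:concat} applies --- to be the only genuinely delicate part; the rest is a definition chase together with the data processing inequality.
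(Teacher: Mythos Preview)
Your proposal is correct and follows essentially the same approach as the paper: (i) and (ii) are dispatched by direct computation and the data processing inequality, (iii) is proved by reordering $\theta$ as $\tilde\mu\otimes K^{\tilde\mu}\otimes S\otimes K^\nu$ and applying Lemma~\ref{lem:concat}, and (v) is proved via the same disintegration identity $\theta_{1:t+1,1:t+1,1:t,1:t}=\theta_{1:t,1:t,1:t,1:t}\otimes\pi_{t+1,t+1}^{X_{1:t},Y_{1:t}}$ using Remark~\ref{rem:equivdefcausal}. The bookkeeping you flag as delicate is precisely where the paper's proof spends its effort as well.
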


\subsection{Stability of adapted optimal transport and its entropic versions}
\label{subsec:stab}
For stability of the entropic problems (in particular, for the convergence of the causal Sinkhorn algorithm later), it will be necessary to control adapted distances through relative entropy. We will even show that adapted distances can -- for bounded settings -- be controlled in a quantitative fashion via the total variation distance.
For two measures $\mu, \tilde{\mu} \in \mathcal{P}(\X)$, we recall the lattice minimum $\mu \land \tilde{\mu}(A) := \inf_{B \subset A \text{ Borel}} \mu(B) + \tilde{\mu}(A\backslash B)$ for Borel sets $A$, or equivalently one can define $\mu \land \tilde{\mu}$ via its density
\[
\frac{d \mu \land \tilde{\mu}}{d \mu + \tilde{\mu}} := \min\left\{\frac{d\mu}{d\mu + \tilde{\mu}}, \frac{d\tilde\mu}{d\mu + \tilde{\mu}}\right\}.
\]
For reference, we recall that the following equality holds
\begin{align}
\label{def:TV} \tag{TV}
\|\mu - \tilde{\mu}\|_{TV} :=&  \sup_{B \subseteq \X \text{ Borel}} \mu(B) - \tilde \mu(B)
\\
\nonumber 
=& 1 - (\mu \land \tilde{\mu}(\X)) = \inf_{\pi \in \Pi(\mu, \tilde{\mu})} \int \eins_{x \neq \tilde{x}} \,\pi(dx, d\tilde{x}),
\end{align}
see, e.g., \cite[Definition 2.15]{Massart.07} and \cite[Lemma 2.20]{Massart.07}.
Further, we define 
\begin{equation}
\label{def:AV}
\tag{AV}
\|\mu - \tilde{\mu}\|_{AV} := \inf_{\pi \in \Pi_{bc}(\mu, \tilde{\mu})} \int \eins_{x \neq \tilde{x}} \,\pi(dx, d\tilde{x}),
\end{equation}
which we call the adapted variation.
The following results establishes that total variation and adapted variations are equivalent up to a constant.

\begin{lemma}
	\label{lem:AV_TV}
	Let $\mu, \tilde{\mu} \in \mathcal P(\X)$.
	Then we have
	\begin{equation}
	\label{eq:AV_TV_statement}
	\|\mu - \tilde{\mu} \|_{TV} \leq \| \mu - \tilde{\mu} \|_{AV} \le \left( 2^N - 1 \right) \| \mu - \tilde{\mu} \|_{TV}.
	\end{equation}
\end{lemma}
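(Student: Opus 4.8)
The first inequality is immediate: $\Pi_{bc}(\mu,\tilde{\mu}) \subseteq \Pi(\mu,\tilde{\mu})$, so taking the infimum in \eqref{def:AV} over a smaller set can only increase it compared to \eqref{def:TV}. The plan for the second inequality is an induction on the number of time steps $N$: I will construct a bicausal coupling that keeps $x$ and $\tilde{x}$ equal with high probability by re-coupling the one-step conditionals ``maximally'' at every time as long as the two paths still coincide. The base case $N=1$ is trivial since then $\Pi_{bc}=\Pi$ and $2^1-1=1$.

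For the inductive step, write $\mu = \mu_1 \otimes \mu^{X_1}_{2:N}$ and $\tilde{\mu} = \tilde{\mu}_1 \otimes \tilde{\mu}^{X_1}_{2:N}$. First I would fix a total-variation-optimal (maximal) coupling $\pi_1 \in \Pi(\mu_1,\tilde{\mu}_1)$, so that $\pi_1(\{x_1 \neq \tilde{x}_1\}) = \|\mu_1 - \tilde{\mu}_1\|_{TV}$ and the restriction of $\pi_1$ to the diagonal projects onto $\mu_1 \wedge \tilde{\mu}_1$. Then I would glue on a second-stage kernel: on the diagonal $\{x_1 = \tilde{x}_1 = z\}$ I attach an $\varepsilon$-minimizer $\kappa_z \in \Pi_{bc}(\mu^{z}_{2:N}, \tilde{\mu}^{z}_{2:N})$ for \eqref{def:AV} (chosen to depend measurably on $z$ via a standard measurable-selection argument), and off the diagonal I attach the product coupling $\mu^{x_1}_{2:N} \otimes \tilde{\mu}^{\tilde{x}_1}_{2:N}$. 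Set $\theta := \pi_1 \otimes \kappa_{(x_1,\tilde{x}_1)}$. The point is that under $\theta$ the conditional law of $X_{t+1}$ given $(X_{1:t},\tilde{X}_{1:t})$ is always $\mu^{X_{1:t}}_{t+1}$, independently of the $\tilde{X}$-coordinates — this holds both for $\kappa_z$ (by its causality together with the tower property of disintegrations) and for the product coupling — and symmetrically for $\tilde{X}$; hence $\theta$ is bicausal, i.e.\ $\theta \in \Pi_{bc}(\mu,\tilde{\mu})$. (This is just the familiar recursive ``coupling the conditionals step by step'' description of bicausal couplings, which can be checked directly against Definition \ref{def:causality}.)

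It then remains to estimate $\theta(\{x\neq\tilde{x}\})$. Splitting according to whether $x_1 = \tilde{x}_1$ and invoking the induction hypothesis on each conditional gives
\[
\|\mu - \tilde{\mu}\|_{AV} \leq \theta(\{x\neq\tilde{x}\}) \leq \|\mu_1 - \tilde{\mu}_1\|_{TV} + (2^{N-1}-1)\int \|\mu^{z}_{2:N} - \tilde{\mu}^{z}_{2:N}\|_{TV}\,(\mu_1\wedge\tilde{\mu}_1)(dz) + \varepsilon .
\]
The key elementary estimate, which I would obtain by disintegrating $\mu$ and $\tilde{\mu}$ against a common reference measure and applying the reverse triangle inequality pointwise in $z$, is
\[
\int \|\mu^{z}_{2:N} - \tilde{\mu}^{z}_{2:N}\|_{TV}\,(\mu_1\wedge\tilde{\mu}_1)(dz) \leq \|\mu - \tilde{\mu}\|_{TV} + \|\mu_1 - \tilde{\mu}_1\|_{TV} .
\]
Combining this with $\|\mu_1 - \tilde{\mu}_1\|_{TV} \leq \|\mu - \tilde{\mu}\|_{TV}$ (total variation does not increase under the pushforward by $x \mapsto x_1$) yields
\[
\|\mu - \tilde{\mu}\|_{AV} \leq \bigl(1 + 2(2^{N-1}-1)\bigr)\|\mu - \tilde{\mu}\|_{TV} + \varepsilon = (2^N-1)\|\mu - \tilde{\mu}\|_{TV} + \varepsilon ,
\]
and letting $\varepsilon \downarrow 0$ closes the induction.

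I expect the main obstacle to be not the quantitative estimate — which is short once the reference-measure decomposition is in place — but the measurability bookkeeping: checking that the $\varepsilon$-optimizers $\kappa_z$ can be selected to form a genuine measurable kernel and that the disintegrations assemble correctly, so that $\theta$ really is a well-defined bicausal coupling.
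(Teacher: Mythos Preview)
Your argument is correct and follows the same inductive skeleton as the paper's proof: both build an explicit bicausal coupling level by level, use the elementary TV estimate
\[
\int \|\mu^{\cdot}-\tilde\mu^{\cdot}\|_{TV}\,d(\text{diagonal part}) \;\le\; \|\mu-\tilde\mu\|_{TV}+\|\text{(marginal)}-\widetilde{\text{(marginal)}}\|_{TV},
\]
and close the recursion to hit exactly $2^N-1$. The difference is the direction of the split. You peel off the \emph{first} coordinate, couple $\mu_1,\tilde\mu_1$ maximally, and invoke the induction hypothesis on the conditionals $\mu^{z}_{2:N},\tilde\mu^{z}_{2:N}$; the paper peels off the \emph{last} coordinate, applies the hypothesis to $\mu_{1:N},\tilde\mu_{1:N}$, and attaches an explicit maximal coupling of the one-step kernels $\mu^{x_{1:N}}_{N+1},\tilde\mu^{x_{1:N}}_{N+1}$ at the end. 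The payoff of the paper's orientation is exactly the point you flagged as the main obstacle: because the ``inner'' object is a single bicausal coupling $\pi^N$ (over which one infimizes at the very end) and the extension kernel is given by a closed formula, no measurable selection of a family $\{\kappa_z\}_z$ of near-optimizers is needed. Your route works too (Kuratowski--Ryll-Nardzewski, or simply unfolding the induction into an explicit recursive construction, handles the selection), but the backward split makes the measurability bookkeeping disappear entirely.
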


\begin{proof}
	The assertion is trivially satisfied when $N = 1$, since then adapted variation and total variation coincide.
	We show the statement by induction:
	Assume that \eqref{eq:AV_TV_statement} holds for $N \in \mathbb N$.
	Let $\X$ be a path space with $(N+1)$-time steps and $\mu,\tilde{\mu} \in \mathcal P(\X)$, then
	\[
	\| \mu_{1:N} - \tilde{\mu}_{1:N} \|_{AV} \le \left( 2^N - 1 \right)
	\| \mu_{1:N} - \tilde{\mu}_{1:N} \|_{TV} \le \left( 2^N - 1 \right) 
	\| \mu - \tilde{\mu} \|_{TV},
	\]
	where the last inequality holds by the data processing inequality for the total variation, see for example \cite[Lemma 4.1]{eckstein2021quantitative} and note that the proof therein also works for the total variation distance.
	Let $\pi^N \in \Pi_{bc}(\mu_{1:N}, \tilde{\mu}_{1:N})$ and write $\eta^{x_{1:N}} := \mu_{N+1}^{x_{1:N}} \land \tilde{\mu}_{N+1}^{x_{1:N}}$ for $x_{1:N} \in \X_{1:N}$.
	We can define a bicausal coupling $\pi \in \Pi_{bc}(\mu,\tilde{\mu})$ by
	\begin{align*}
	\pi 
	&:= 
	\pi^N \otimes \pi_{N+1,N+1}^{x_{1:N}, y_{1:N}},
	\quad
	\text{where }
	\\
	\pi_{N+1,N+1}^{x_{1:N}, y_{1:N}} 
	&:=
	\begin{cases}
	(\id,\id)_\ast \eta^{x_{1:N}} 
	+ 
	(\mu_{N+1}^{x_{1:N}} - \eta^{x_{1:N}}) 
	\otimes
	(\tilde{\mu}_{N+1}^{x_{1:N}} - \eta^{x_{1:N}})
	&
	x_{1:N} = y_{1:N},
	\\
	\mu_{N+1}^{x_{1:N}}
	\otimes
	\tilde{\mu}_{N+1}^{x_{1:N}}
	&
	\text{else}.
	\end{cases}
	\end{align*}
	This allows us to estimate the adapted variation:
    Consider the diagonal in $\X_{1:N} \times \Y_{1:N}$ that is the set of pairs $(x_{1:N},y_{1:N}) \in \X_{1:N} \times \Y_{1:N}$ with $x_{1:N} = y_{1:N}$ and write $\eta := (X_{1:N})_\ast (\eins_{X_{1:N} = Y_{1:N}} \pi^N)$, where $\eins_{X_{1:N} = Y_{1:N}} \pi^N$ is the measure which has density $\eins_{X_{1:N} = Y_{1:N}}$ w.r.t.~$\pi^N$.
    Note that $\eta$ essentially corresponds to the part of $\pi^N$ that is supported on the diagonal.
    We find
	\begin{align}
	\nonumber
	\| \mu - \tilde{\mu} \|_{AV} 
	&
	\le \int \eins_{x \neq y} \, \pi(dx,dy)
	\\
	\nonumber
	&
	= \int \eins_{x_{1:N} \neq y_{1:N}} \, \pi^N(dx_{1:N}, dy_{1:N})
	+
	\int \left(1 - \eta^{x_{1:N}}(\X_{N+1}) \right) \, \eta(dx_{1:N})
	\\
	\nonumber
	&
	= \int \eins_{x_{1:N} \neq y_{1:N}} \, \pi^N(dx_{1:N}, dy_{1:N})
	+
	\int \| \mu^{x_{1:N}}_{N+1} - \tilde{\mu}^{x_{1:N}}_{N+1} \|_{TV} \, \eta(dx_{1:N})
	\\
	\label{eq:AV estimate step 1}
	&
	= \int \eins_{x_{1:N} \neq y_{1:N}} \, \pi^N(dx_{1:N}, dy_{1:N})
	+
	\| \eta \otimes \left( \mu^{x_{1:N}}_{N+1} - \tilde{\mu}^{x_{1:N}}_{N+1} \right) \|_{TV},
	\end{align}
	 which can be seen as follows: 
	the inequality is due to $\pi \in \Pi_{bc}(\mu,\tilde \mu)$ and the definition \eqref{def:AV}.
	The first equality follows from decomposing $\pi^N$ into two measures $\eta$ and $\eins_{x_{1:n} \neq y_{1:n}} \pi^N$, and then using the definition of the kernel $\pi^{x_{1:N},y_{1:N}}_{N+1,N+1}$.
	The second inequality stems from \eqref{def:TV} and the definition of the kernel $\eta^{x_{1:N}}$.
	Finally, the last equality is a straight-forward application of Fubini's Theorem.
	
	Let $G \subseteq \X$ be measurable and write $G^{x_{1:N}} := \{ x_{N+1} \colon (x_{1:N}, x_{N+1}) \in G \}$.
	We compute
	\begin{align}
	\nonumber
	\eta \otimes \left( \mu^{x_{1:N}}_{N+1} - \tilde{\mu}^{x_{1:N}}_{N+1} \right) (G)
	&=
	\int \mu^{x_{1:N}}_{N+1}(G^{x_{1:N}}) - \tilde{\mu}^{x_{1:N}}_{N+1}(G^{x_{1:N}}) \, \eta(dx_{1:N})
	\\
	\nonumber
	&=
	\mu(G) - \tilde{\mu}(G) 
	+ \int \mu^{x_{1:N}}_{N+1}(G^{x_{1:N}}) \, (\eta - \mu_{1:N})(dx_{1:N})
	\\
	\nonumber
	&\phantom{=\mu(G)}
	- \int \tilde{\mu}^{x_{1:N}}_{N+1}(G^{x_{1:N}}) \, (\eta - \tilde{\mu}_{1:N})(dx_{1:N})
	\\
	\nonumber
	&\le
	\mu(G) - \tilde{\mu}(G) - \int 1 \, (\eta - \tilde{\mu}_{1:N})(dx_{1:N})
	\\
	\label{eq:AV estimate step 2}
	&\le
	\| \mu - \tilde{\mu} \|_{TV} + \int \eins_{x_{1:N} \neq y_{1:N}}  \pi^N(dx_{1:N}, dy_{1:N}).
	\end{align}
	 Note that the first inequality is due to $\eta \le \mu_{1:N}$ and the second inequality follows from \eqref{def:TV}.
	By plugging \eqref{eq:AV estimate step 2} into \eqref{eq:AV estimate step 1}, we obtain, as $G$ and $\pi^N$ were arbitrary,
	\begin{align*}
	\| \mu - \tilde{\mu} \|_{AV}
	&\le
	2 \| \mu_{1:N} - \tilde{\mu}_{1:N} \|_{AV}
	+
	\| \mu - \tilde{\mu} \|_{TV}
	\\
	&\le
	\left( 2^{N+1} - 2 + 1 \right) \| \mu - \tilde{\mu} \|_{TV}
	=
	\left( 2^{N+1} - 1 \right) \| \mu - \tilde{\mu} \|_{TV},
	\end{align*}
	where the last inequality is due to the induction hypothesis.
\end{proof}

Recall that by Remark \ref{rem:existence_optimizers} there exist optimizers of the problems $V_\bullet$ and $E_\bullet ^\varepsilon$ as long as the cost is lower semicontinuous and lower-bounded.
We can state the first main result:
\begin{theorem}[Qualitative stability]
	\label{thm:qual_stab}
	Let $p \in [1, \infty)$ and $c$ be continuous and have growth of order at most $p$.
	Let $\mu^n \in \mathcal{P}_p(\X)$ and $\nu^n \in \mathcal{P}_p(\Y)$, $n \in \mathbb{N}$, be a sequences converging to
	$\mu \in \mathcal P_p(\X)$ resp.\ $\nu \in \mathcal{P}_p(\Y)$ in $W_{p,bc}$.
	Then
	\[
	\lim_{n\rightarrow \infty} V_{\bullet}(\mu^n, \nu^n, c) = V_\bullet(\mu, \nu, c),
	\]
	and accumulation points of optimizers of $(V_{\bullet}(\mu^n, \nu^n, c))_{n\in\mathbb{N}}$ are optimizers of $V_\bullet(\mu, \nu, c)$. 
	Further,
	\[
	\lim_{n\rightarrow \infty} E^\varepsilon_{\bullet}(\mu^n, \nu^n, c) = E^\varepsilon_{\bullet}(\mu, \nu, c)
	\]
	and the associated optimizers converge in $W_p$.
	\begin{proof}
		In this proof, subsequences will be relabeled by the initial index.
		Since we will require shadows in this proof, we fix the notation $\mu^n \otimes K^{\mu, n} = \mu \otimes \tilde{K}^{\mu, n}$ as an optimizer of $W_{p, bc}(\mu^n, \mu)$ and similarly $\nu^n \otimes K^{\nu, n} = \nu \otimes \tilde{K}^{\nu, n}$ as an optimizer of $W_{p, bc}(\nu^n, \nu)$.
		
		We first show the statement for $V_\bullet$. Let $\pi^{*, n} \in \Pi(\mu^n, \nu^n)$ be optimizers of $V_\bullet(\mu^n, \nu^n, c)$ and $\pi^*$ be an optimizer of $V_\bullet(\mu, \nu, c)$.
		
		We show that any limit point $\pi$ of $(\pi^{*, n})_{n}$ satisfies $\pi \in \Pi_{\bullet}(\mu, \nu)$.
		To this end, let $\hat{\pi}^n$ be the $(K^{\mu, n}, K^{\nu, n})$-shadow of $\pi^{*, n}$. 
		By Lemma \ref{lem:prop_shadow} (i), (iii) and (iv), $\hat{\pi}^n \in \Pi_{\bullet}(\mu, \nu)$ and $\hat{\pi}^n \rightarrow \pi$ (in $W_p$) as well. Since $\Pi_{\bullet}(\mu, \nu)$ is closed by Lemma \ref{lem:all sets of couplings are compact}, $\pi\in \Pi_{\bullet}(\mu, \nu)$ follows.
		
		Since by weak compactness, any subsequence of $(\pi^{*, n})_{n\in\mathbb{N}}$ has a further subsequence converging to an accumulation point $\pi$, which by the above satisfies $\pi \in \Pi_{\bullet}(\mu, \nu)$, we get
		\[
		\liminf_{n\rightarrow \infty} V_\bullet(\mu^n, \nu^n, c) = \liminf_{n\rightarrow \infty} \int c \,d\pi^{*, n} \ge \inf_{\pi \in \Pi_{\bullet}(\mu,\nu} \int c \,d\pi \geq \int c \,d\pi^*.
		\]
		On the other hand, let $\pi^n$ be the $(\tilde{K}^{\mu, n}, \tilde{K}^{\nu, n})$-shadow of $\pi^*$. Then, by Lemma \ref{lem:prop_shadow} (i), (iii) and (iv), $\pi^n \in \Pi_{\bullet}(\mu^n, \nu^n)$ and $W_p(\pi^n, \pi^*) \rightarrow 0$. Thus
		\[
		\limsup_{n\rightarrow \infty} V_\bullet(\mu^n, \nu^n, c) \leq \lim_{n\rightarrow \infty} \int c \,d\pi^n = \int c \,d\pi^*,
		\]
		where the inequality follows from $\pi^n \in \Pi_{\bullet}(\mu^n, \nu^n)$ and the equality is due $W_p$-convergence of $\pi^n$ to $\pi^\ast$.
		
		Concerning the proof for $E^\varepsilon_\bullet$:
		The inequality $\liminf_{n\rightarrow \infty}E^\varepsilon_\bullet(\mu^n, \nu^n, c) \geq E^\varepsilon_\bullet(\mu, \nu, c)$ can be shown analogously as above, while using lower-semicontinuity of the relative entropy.
		On the other hand, the reverse inequality $\limsup_{n\rightarrow \infty} E^\varepsilon_\bullet(\mu^n, \nu^n, c) \leq E^\varepsilon_\bullet(\mu, \nu, c)$ again can be derived analogously by using the properties of the shadow, see Lemma \ref{lem:prop_shadow} (ii).
	\end{proof}
\end{theorem}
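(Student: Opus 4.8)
The plan is to read both limits off from the shadow construction of Definition \ref{def:shadow} together with its properties in Lemma \ref{lem:prop_shadow}, using \emph{bicausal} optimal couplings between the marginals as the transition kernels of the shadow. The reason $W_{p,bc}$-convergence of the marginals (rather than mere $W_p$-convergence) is the right hypothesis is exactly this: a $W_{p,bc}$-optimal coupling between, say, $\mu^n$ and $\mu$ is simultaneously causal and anticausal, which is what lets us invoke parts (iii) and (iv) of Lemma \ref{lem:prop_shadow} in all four cases $\bullet\in\{\,\cdot\,,c,ac,bc\}$ at once. So I would fix, once and for all, $W_{p,bc}$-optimal couplings $\mu^n\otimes K^{\mu,n}=\mu\otimes\tilde K^{\mu,n}\in\Pi_{bc}(\mu^n,\mu)$ and $\nu^n\otimes K^{\nu,n}=\nu\otimes\tilde K^{\nu,n}\in\Pi_{bc}(\nu^n,\nu)$.

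For the value of $V_\bullet$, I would first prove $\liminf_n V_\bullet(\mu^n,\nu^n,c)\ge V_\bullet(\mu,\nu,c)$. Take optimizers $\pi^{*,n}$ of $V_\bullet(\mu^n,\nu^n,c)$; since the marginal sequences are $W_p$-convergent they are tight with uniformly integrable $p$-th moments, so $(\pi^{*,n})$ is tight and, along any subsequence, has a further subsequence converging weakly to some $\pi$. Let $\hat\pi^n$ be the $(K^{\mu,n},K^{\nu,n})$-shadow of $\pi^{*,n}$: by Lemma \ref{lem:prop_shadow}(iii)/(iv) (using that the transition kernels are bicausal, hence both causal and anticausal) one gets $\hat\pi^n\in\Pi_\bullet(\mu,\nu)$, and by Lemma \ref{lem:prop_shadow}(i) the coupling $\pi^{*,n}\otimes K^n$ witnessing $\hat\pi^n$ has transport cost $W_{p,bc}(\mu^n,\mu)^p+W_{p,bc}(\nu^n,\nu)^p\to0$, whence $W_p(\hat\pi^n,\pi^{*,n})\to0$ and $\hat\pi^n\to\pi$ weakly as well; closedness of $\Pi_\bullet(\mu,\nu)$ (Lemma \ref{lem:all sets of couplings are compact}) then gives $\pi\in\Pi_\bullet(\mu,\nu)$. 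Since $c$ has growth of order at most $p$, $\int c\,d\pi^{*,n}\to\int c\,d\pi\ge V_\bullet(\mu,\nu,c)$, and the usual subsequence argument upgrades this to the $\liminf$ bound; the accumulation-point statement is read off from the same chain of inequalities. For the reverse bound, take an optimizer $\pi^*$ of $V_\bullet(\mu,\nu,c)$ and let $\pi^n$ be its $(\tilde K^{\mu,n},\tilde K^{\nu,n})$-shadow; then $\pi^n\in\Pi_\bullet(\mu^n,\nu^n)$, $W_p(\pi^n,\pi^*)\to0$, so $\limsup_n V_\bullet(\mu^n,\nu^n,c)\le\lim_n\int c\,d\pi^n=\int c\,d\pi^*=V_\bullet(\mu,\nu,c)$.

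For $E^\varepsilon_\bullet$ the $\limsup$ bound is the same construction with the entropy term added: with $\pi^*$ now an optimizer of $E^\varepsilon_\bullet(\mu,\nu,c)$ and $\pi^n$ its shadow, Lemma \ref{lem:prop_shadow}(ii), applied to the observation that $\mu^n\otimes\nu^n$ is the shadow of $P=\mu\otimes\nu$, gives $D_{\rm KL}(\pi^n,\mu^n\otimes\nu^n)\le D_{\rm KL}(\pi^*,P)$, while (i) controls the cost term, yielding $\limsup_n E^\varepsilon_\bullet(\mu^n,\nu^n,c)\le E^\varepsilon_\bullet(\mu,\nu,c)$. The $\liminf$ bound is the tightness argument from before, now also invoking joint lower semicontinuity of $(\pi,Q)\mapsto D_{\rm KL}(\pi,Q)$ under weak convergence together with $\mu^n\otimes\nu^n\to P$ weakly. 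Finally, for the convergence of the (unique, by strict convexity of the entropy) optimizers $\rho^n$ of $E^\varepsilon_\bullet(\mu^n,\nu^n,c)$: they are tight, any weak accumulation point $\bar\rho$ lies in $\Pi_\bullet(\mu,\nu)$ by the shadow argument and satisfies $\int c\,d\bar\rho+\varepsilon D_{\rm KL}(\bar\rho,P)\le\lim_n E^\varepsilon_\bullet(\mu^n,\nu^n,c)=E^\varepsilon_\bullet(\mu,\nu,c)$ by lower semicontinuity, hence equals the unique optimizer $\rho$; thus $\rho^n\to\rho$ weakly, and since $W_{p,bc}$-convergence of the marginals forces convergence of the $p$-th moments $\int d_\X(\hat x,\cdot)^p\,d\mu^n+\int d_\Y(\hat y,\cdot)^p\,d\nu^n$, which are precisely the $p$-th moments of $\rho^n$, the weak convergence upgrades to $W_p$-convergence.

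The crux, and the step I expect to be the main obstacle, is the $\liminf$ argument: one needs the weak limit $\pi$ of the optimizers $\pi^{*,n}$ to be a $\bullet$-coupling of the \emph{limit} marginals $(\mu,\nu)$, whereas all one is handed is that $\pi^{*,n}$ is a $\bullet$-coupling of the \emph{moving} marginals $(\mu^n,\nu^n)$. Bridging this gap is exactly what the shadow is for, but it hinges on two things: (a) the shadow preserves the relevant (bi)causality structure — which is why $W_{p,bc}$, and not $W_p$, is the correct metric on marginals — and (b) the shadow stays $W_p$-close to the original coupling so it has the same weak limit. Point (a) reduces to careful bookkeeping with Lemma \ref{lem:prop_shadow}(iii)/(iv), point (b) to Lemma \ref{lem:prop_shadow}(i); everything else (tightness, continuity of $\pi\mapsto\int c\,d\pi$ under $W_p$ for cost of growth order $p$, lower semicontinuity of relative entropy, uniqueness of the entropic optimizer) is standard.
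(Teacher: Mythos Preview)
Your proposal is correct and follows essentially the same approach as the paper: both proofs fix $W_{p,bc}$-optimal couplings between the marginals, use the shadow construction (Lemma~\ref{lem:prop_shadow}(i),(iii),(iv)) to push optimizers back and forth between $\Pi_\bullet(\mu^n,\nu^n)$ and $\Pi_\bullet(\mu,\nu)$ for the $\liminf$/$\limsup$ bounds on $V_\bullet$, and then handle $E^\varepsilon_\bullet$ by adding lower semicontinuity of relative entropy for the $\liminf$ and Lemma~\ref{lem:prop_shadow}(ii) for the $\limsup$. Your treatment of the convergence of the entropic optimizers (via uniqueness and the upgrade from weak to $W_p$-convergence through convergence of marginal moments) is in fact more explicit than the paper's.
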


To quantify the respective stability results, we require some assumptions on both marginal distributions and the cost function:
\begin{definition}
	Let $p \in [1, \infty)$, $\mu \in \mathcal{P}(\X), \nu \in \mathcal{P}(\Y)$, and $L > 0$. We say that the cost function $c$ satisfies $A(L, p, \mu, \nu)$, if \[\tag{$A(L, p, \mu, \nu)$} \forall \pi, \tilde{\pi} \in \Pi(\mu, \nu):~ \Big|\int c \,d\pi - \int c \,d\tilde{\pi} \Big| \leq L\, W_p(\pi, \tilde{\pi}).\]
	Further, we say that $\mu, \nu$ satisfy $T(p, C_p)$, resp $T_a(p, C_p)$, for $C_p >0$, if 
	\begin{align}
	&\forall \pi, \tilde{\pi} \in \Pi(\mu, \nu):& ~ W_p(\pi, \tilde{\pi}) &\leq C_p D_{\rm KL}(\pi, \tilde{\pi})^{\frac{1}{2p}} \tag{$T(p, C_p)$} \\
	&\forall \pi, \tilde{\pi} \in \Pi(\mu, \nu):& ~ \max\{W_{p, c}(\pi, \tilde{\pi}), W_{p, ac}(\pi, \tilde{\pi})\} &\leq C_p D_{\rm KL}(\pi, \tilde{\pi})^{\frac{1}{2p}} \tag{$T_a(p, C_p)$}
	\end{align}
\end{definition}

Note that $A(L, p, \mu, \nu)$ is clearly a weaker assumption than Lipschitz continuity of $c$, and also satisfied in more general and relevant cases, see, e.g., \cite[Example 3.4 and Lemma 3.5]{eckstein2021quantitative}.
Criteria for $T(p, C_p)$ are given in \cite[Lemma 3.10]{eckstein2021quantitative}. The following establishes that $T_a(p, C_p)$ is satisfied in bounded spaces.
\begin{lemma}
	If $\diam(\X \times \Y) = \sup_{(x, y), (\tilde{x}, \tilde{y}) \in \X\times\Y}d_{\X}(x, \tilde{x}) +d_{\Y}(x, \tilde{y}) < \infty$, then any $\mu \in \mathcal{P}(\X)$ and $\nu \in \mathcal{P}(\Y)$ satisfy $T_a(p, C_p)$ with \[C_p = \diam(\X \times \Y) \,2^{\frac{N}{p}} \,2^{-\frac{1}{2p}}.\]
	\begin{proof}
		Combine Pinsker's inequality with Lemma \ref{lem:AV_TV}.
	\end{proof}
\end{lemma}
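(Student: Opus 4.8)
The plan is to unwind the definitions of $W_{p,c}$ and $W_{p,ac}$ on the product path space $\X \times \Y$ (whose $N$ time steps are the spaces $\X_t \times \Y_t$, and for which causality is understood as in Definition \ref{def:causality}), bound the transport cost by $\diam(\X\times\Y)^p$ times the probability of leaving the diagonal, and then convert this into the adapted variation, which Lemma \ref{lem:AV_TV} and Pinsker's inequality control via relative entropy.

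Concretely, fix $\pi, \tilde\pi \in \Pi(\mu,\nu)$ and let $\theta \in \Pi_{bc}(\pi,\tilde\pi)$ be arbitrary. Since $\Pi_{bc} \subseteq \Pi_c$ and $\Pi_{bc} \subseteq \Pi_{ac}$, it suffices to bound $\int d_\X(x,\tilde x)^p + d_\Y(y,\tilde y)^p \, d\theta$ for such $\theta$ and then optimize over $\theta \in \Pi_{bc}(\pi,\tilde\pi)$. On the diagonal $\{(x,y)=(\tilde x,\tilde y)\}$ the integrand vanishes; off the diagonal, the elementary inequality $a^p + b^p \le (a+b)^p$ for $p \ge 1$, $a,b \ge 0$, together with $d_\X(x,\tilde x)+d_\Y(y,\tilde y) \le \diam(\X\times\Y)$, bounds it by $\diam(\X\times\Y)^p$. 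Hence
\[
\int d_\X(x,\tilde x)^p + d_\Y(y,\tilde y)^p \, d\theta \le \diam(\X\times\Y)^p \int \eins_{(x,y)\neq(\tilde x,\tilde y)} \, d\theta ,
\]
and taking the infimum over $\theta \in \Pi_{bc}(\pi,\tilde\pi)$ yields, by \eqref{def:AV} applied on $\X\times\Y$,
\[
\max\{ W_{p,c}(\pi,\tilde\pi)^p, W_{p,ac}(\pi,\tilde\pi)^p \} \le \diam(\X\times\Y)^p \, \| \pi - \tilde\pi \|_{AV} .
\]

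To finish, I would apply Lemma \ref{lem:AV_TV} to get $\| \pi - \tilde\pi \|_{AV} \le (2^N - 1) \| \pi - \tilde\pi \|_{TV} \le 2^N \| \pi - \tilde\pi \|_{TV}$, and then Pinsker's inequality, which in the normalization of \eqref{def:TV} reads $\| \pi - \tilde\pi \|_{TV} \le 2^{-1/2} D_{\rm KL}(\pi,\tilde\pi)^{1/2}$. Combining the three displays and taking $p$-th roots gives
\[
\max\{W_{p,c}(\pi,\tilde\pi), W_{p,ac}(\pi,\tilde\pi)\} \le \diam(\X\times\Y)\, 2^{N/p}\, 2^{-1/(2p)}\, D_{\rm KL}(\pi,\tilde\pi)^{1/(2p)} ,
\]
which is exactly $T_a(p, C_p)$ with the asserted constant.

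The argument is essentially mechanical; the only points that deserve care rather than constituting a genuine obstacle are that Lemma \ref{lem:AV_TV} must be invoked on the correct product path space $\X\times\Y$ (so that $N$ — and not some other integer — appears in the constant), that $\Pi_{bc}$ sits inside both $\Pi_c$ and $\Pi_{ac}$ on that space so that a single inequality handles both adapted Wasserstein distances, and that the normalization of $\|\cdot\|_{TV}$ fixed in \eqref{def:TV} is the one for which Pinsker contributes the factor $2^{-1/2}$. I do not anticipate any real difficulty.
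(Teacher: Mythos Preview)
Your proof is correct and is exactly the argument the paper has in mind: the paper's one-line proof ``Combine Pinsker's inequality with Lemma \ref{lem:AV_TV}'' is precisely the chain $W_{p,\bullet}^p \le \diam^p \cdot \|\cdot\|_{AV} \le \diam^p \cdot 2^N \|\cdot\|_{TV} \le \diam^p \cdot 2^N \cdot 2^{-1/2} D_{\rm KL}^{1/2}$ that you spell out. Your care points (applying Lemma \ref{lem:AV_TV} on the product path space so the exponent is $N$, using $\Pi_{bc} \subseteq \Pi_c \cap \Pi_{ac}$, and the normalization of $\|\cdot\|_{TV}$) are exactly the things one must check, and you have them right.
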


\begin{theorem}[Quantitative stability]
	\label{thm:quantitative_stab}
	Let $p \in [1, \infty)$, $\mu, \tilde{\mu} \in \mathcal{P}_p(\X), \nu, \tilde{\nu} \in \mathcal{P}_p(\Y)$, and $c$ satisfy $A(L, p, \mu, \nu)$ for $L>0$. Let $\Delta^p := W_{bc}(\mu, \tilde{\mu})^p + W_{bc}(\nu, \tilde{\nu})^p$.
	\begin{enumerate}[label = (\roman*)]
		\item \label{it:quantitative_stab_1}
		We have
		\begin{align*}|V_\bullet(\mu, \nu, c) - V_\bullet(\tilde{\mu}, \tilde{\nu}, c)| &\leq L\, \Delta,\\ |E_\bullet(\mu, \nu, c) - E_\bullet(\tilde{\mu}, \tilde{\nu}, c)| &\leq L\, \Delta.\end{align*}
		\item \label{it:quantitative_stab_2}
		Let $1\leq q < p$. Let $\pi^*$ be the optimizer of $E_\bullet(\mu, \nu, c)$ and $\tilde{\pi}^*$ be the optimizer of $E_\bullet(\tilde\mu, \tilde{\nu}, c)$. If $\mu, \nu$ satisfy $T(q, C_q)$, then
		\[
		W_q(\pi^*, \tilde{\pi}^*) \leq 2^{\frac{1}{q}-\frac{1}{p}} \Delta + C_q(2 L \Delta)^{\frac{1}{2q}}.
		\]
		If $\mu, \nu$ and $\tilde{\mu}, \tilde{\nu}$ even satisfy $T_a(q, C_q)$, then
		\[
		\max\{W_{q, c}(\pi^*, \tilde{\pi}^*), W_{q, ac}(\pi^*, \tilde{\pi}^*)\} \leq 2^{\frac{1}{q}-\frac{1}{p}} \Delta + C_q(2 L \Delta)^{\frac{1}{2q}}.
		\]
	\end{enumerate}
	\begin{proof}
		We fix the notation $\mu \otimes K^{\mu} = \tilde{\mu} \otimes \tilde{K}^{\mu}$ as an optimizer of $W_{p, bc}(\mu, \tilde{\mu})$ and $\nu \otimes K^\nu = \tilde{\nu} \otimes \tilde{K}^\nu$ as an optimizer of $W_{p, bc}(\nu, \tilde{\nu})$.
		
		\ref{it:quantitative_stab_1}:
		As the statements for $E_\bullet$ and $V_\bullet$ follow by similar reasoning, we only show them for $E_\bullet$:
		Let $\tilde{\pi}^*$ be an optimizer of $E_\bullet(\tilde{\mu}, \tilde{\nu}, c)$ and $\pi$ be the $(\tilde{K}^{\mu}, \tilde{K}^\nu)$-shadow of $\tilde{\pi}^*$. Then, since $\pi \in \Pi_{\bullet}(\mu, \nu)$ (cf.~Lemma \ref{lem:prop_shadow}(iii) and (iv)), and using Lemma \ref{lem:prop_shadow}(i) and (ii), as well as the assumption on $c$, we get
		\begin{align*}
		E_\bullet(\mu, \nu, c) &\leq \int c\,d\pi + D_{\rm KL}(\pi, P) \\
		&\leq \int c \,d\tilde{\pi}^* + D_{\rm KL}(\tilde{\pi}^*, \tilde{\mu} \otimes \tilde{\nu}) + \big|\int c \,d(\pi - \tilde{\pi})\big| \\
		&\leq E_\bullet(\tilde\mu, \tilde{\nu}, c) + L\, W_p(\pi, \tilde{\pi}) \\
		&\leq E_\bullet(\tilde\mu, \tilde{\nu}, c) + L\, \Delta.
		\end{align*}
		The reverse inequality follows by symmetry, and thus the proof of (i) is completed.
		
		\ref{it:quantitative_stab_2}:
		We introduce additional notation for the values
		\[
		\mathcal{F}(\pi) := \int c\,d\pi + D_{\rm Kl}(\pi, P) 
		\text{ and }
		\tilde{\mathcal{F}}(\tilde\pi) := \int c\,d\tilde{\pi} + D_{\rm KL}(\tilde\pi, \tilde{\mu}\otimes \tilde{\nu}),
		\]
		where $\pi, \tilde \pi \in \mathcal P(\X \times \Y)$.
		First, we treat the case where $\mu, \nu$ satisfy $T(q, C_q)$.
		
		Let $\pi$ be the $(\tilde{K}^{\mu}, \tilde{K}^\nu)$-shadow of $\tilde{\pi}^*$. The basic method of proof is the following:
		\begin{align*}
		W_q(\pi^*, \tilde{\pi}^*) &\leq W_q(\tilde{\pi}^*, \pi) + W_q(\pi, \pi^*) \\
		&\leq 2^{\frac{1}{q}-\frac{1}{p}} \,W_p(\tilde{\pi}^*, \pi) + C_q\,D_{\rm KL}(\pi, \pi^*)^{\frac{1}{2q}} \\
		&\stackrel{(1)}{\leq} 2^{\frac{1}{q}-\frac{1}{p}} \,\Delta + C_q\,\big(\mathcal{F}(\pi) - \mathcal{F}(\pi^*)\big)^{\frac{1}{2q}} \\
		&\stackrel{(2)}{\leq} 2^{\frac{1}{q}-\frac{1}{p}} \,\Delta + C_q\,\big(2 \, L \,\Delta\big)^{\frac{1}{2q}}.
		\end{align*}
		While the second inequality is just assumption $T(q, C_q)$ for $\mu$ and $\nu$, it remains to justify steps (1) and (2). Step (1) follows by the Pythagorean theorem for relative entropy \cite[Theorem 2.2]{csiszar1975divergence}, combined with the standard reformulation for entropic optimal transport (cf.~\cite[Lemma 4.4]{eckstein2021quantitative}, which works entirely analogously for the adapted optimal transport). Step (2) follows by noting that the proof for $(i)$ actually shows $\mathcal{F}(\pi) \leq \tilde{\mathcal{F}}(\tilde{\pi}^*) + L \,\Delta$ and thus $\mathcal{F}(\pi) \leq \mathcal{F}(\pi^*) + 2\,L\,\Delta$.
		
		Next, we assume that $\mu, \nu$ satisfy $T_a(q, C_q)$. Then, the basic method of proof works as above, where we additionally use that by Lemma \ref{lem:prop_shadow}(v), it holds $W_{p, c}(\tilde\pi^*, \pi) \leq \Delta$, since the coupling induced by the shadow is causal. Thus, we get
		\begin{align*}
		W_{q, c}(\pi^*, \tilde{\pi}^*) &\leq W_{q, c}(\tilde{\pi}^*, \pi) + W_{q, c}(\pi, \tilde{\pi}^*) \\
		&\leq 2^{\frac{1}{q}-\frac{1}{p}} W_{p, c}(\tilde{\pi}^*, \pi) + C_q\,D_{\rm KL}(\pi, \pi^*)^{\frac{1}{2q}} \\
		&\leq 2^{\frac{1}{q}-\frac{1}{p}} \Delta + C_q\,\big(\mathcal{F}(\pi) - \mathcal{F}(\pi^*)\big)^{\frac{1}{2q}} \\
		&\leq 2^{\frac{1}{q}-\frac{1}{p}} \Delta
		+ C_q\,\big(2 \, L \,\Delta\big)^{\frac{1}{2q}}.
		\end{align*}
		The anticausal direction works analogously by using assumption $T_a(p, C_q)$ for $\tilde{\mu}, \tilde{\nu}$. This completes the proof.
	\end{proof}
\end{theorem}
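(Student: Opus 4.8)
The plan is to run everything through the shadow construction of Definition~\ref{def:shadow} and the properties collected in Lemma~\ref{lem:prop_shadow}. First I would fix bicausal-optimal couplings $\mu\otimes K^\mu=\tilde\mu\otimes\tilde K^\mu$ and $\nu\otimes K^\nu=\tilde\nu\otimes\tilde K^\nu$ realizing $W_{p,bc}(\mu,\tilde\mu)$ and $W_{p,bc}(\nu,\tilde\nu)$, so that in the notation of Lemma~\ref{lem:prop_shadow} one has $C_{K^\mu}+C_{K^\nu}=\Delta^p$. For \ref{it:quantitative_stab_1}, take an optimizer $\tilde\pi^*$ of $E_\bullet(\tilde\mu,\tilde\nu,c)$ and let $\pi$ be its $(\tilde K^\mu,\tilde K^\nu)$-shadow. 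By Lemma~\ref{lem:prop_shadow}(iii)--(iv) the shadow lies in $\Pi_\bullet(\mu,\nu)$; by (i) it is joined to $\tilde\pi^*$ by a coupling of cost $\Delta^p$, hence $W_p(\pi,\tilde\pi^*)\le\Delta$; and by (ii), $D_{\rm KL}(\pi,P)\le D_{\rm KL}(\tilde\pi^*,\tilde\mu\otimes\tilde\nu)$. Writing $\int c\,d\pi=\int c\,d\tilde\pi^*+\bigl(\int c\,d\pi-\int c\,d\tilde\pi^*\bigr)$ and bounding the difference by $L\,W_p(\pi,\tilde\pi^*)\le L\Delta$ via $A(L,p,\mu,\nu)$ gives $E_\bullet(\mu,\nu,c)\le E_\bullet(\tilde\mu,\tilde\nu,c)+L\Delta$; the reverse inequality is the symmetric argument with $K^\mu,K^\nu$, and dropping the entropy term throughout yields the $V_\bullet$ bound.

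For \ref{it:quantitative_stab_2} I would write $\mathcal F(\pi)=\int c\,d\pi+D_{\rm KL}(\pi,P)$ and $\tilde{\mathcal F}(\tilde\pi)=\int c\,d\tilde\pi+D_{\rm KL}(\tilde\pi,\tilde\mu\otimes\tilde\nu)$, with unique minimizers $\pi^*,\tilde\pi^*$ over $\Pi_\bullet(\mu,\nu)$ and $\Pi_\bullet(\tilde\mu,\tilde\nu)$ (Remark~\ref{rem:existence_optimizers}), let $\pi$ be the $(\tilde K^\mu,\tilde K^\nu)$-shadow of $\tilde\pi^*$, and split $W_q(\pi^*,\tilde\pi^*)\le W_q(\tilde\pi^*,\pi)+W_q(\pi,\pi^*)$. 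For the first term, use the coupling from Lemma~\ref{lem:prop_shadow}(i): since $q<p$, the power-mean inequality $a^q+b^q\le 2^{1-q/p}(a^p+b^p)^{q/p}$ and Jensen give $W_q(\tilde\pi^*,\pi)\le 2^{1/q-1/p}W_p(\tilde\pi^*,\pi)\le 2^{1/q-1/p}\Delta$. For the second term, $\pi$ and $\pi^*$ both lie in $\Pi(\mu,\nu)$, so $T(q,C_q)$ yields $W_q(\pi,\pi^*)\le C_q D_{\rm KL}(\pi,\pi^*)^{1/(2q)}$, and it remains to bound $D_{\rm KL}(\pi,\pi^*)$. Here I would invoke the generalized Pythagorean theorem for relative entropy \cite[Theorem~2.2]{csiszar1975divergence}, in the entropic-OT reformulation of \cite[Lemma~4.4]{eckstein2021quantitative}: since $\pi^*$ is the $D_{\rm KL}(\cdot,P)$-regularized minimizer over the linearly constrained convex set $\Pi_\bullet(\mu,\nu)$, this gives $D_{\rm KL}(\pi,\pi^*)\le\mathcal F(\pi)-\mathcal F(\pi^*)$ for all $\pi\in\Pi_\bullet(\mu,\nu)$. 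The proof of \ref{it:quantitative_stab_1} in fact shows $\mathcal F(\pi)\le\tilde{\mathcal F}(\tilde\pi^*)+L\Delta=E_\bullet(\tilde\mu,\tilde\nu,c)+L\Delta\le\mathcal F(\pi^*)+2L\Delta$, so $\mathcal F(\pi)-\mathcal F(\pi^*)\le 2L\Delta$, and combining the three displays gives the first bound.

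For the adapted bound I would rerun the same three-step argument with $W_{q,c}$ and $W_{q,ac}$ in place of $W_q$: Lemma~\ref{lem:prop_shadow}(v) guarantees that the shadow coupling $\pi\otimes K$ is causal, hence it is admissible in $W_{p,c}(\tilde\pi^*,\pi)$ and $W_{p,c}(\tilde\pi^*,\pi)\le\Delta$, so the first term is controlled as before, while $T_a(q,C_q)$ for $\mu,\nu$ controls $\max\{W_{q,c},W_{q,ac}\}(\pi,\pi^*)$ by $C_q D_{\rm KL}(\pi,\pi^*)^{1/(2q)}$ and the entropy estimate is unchanged. The anticausal half is the mirror image: take the $(K^\mu,K^\nu)$-shadow of $\pi^*$ (whose associated coupling is again causal, so it bounds the relevant Wasserstein term by $\Delta$) and use $T_a(q,C_q)$ for $\tilde\mu,\tilde\nu$.

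The hard part will be the entropy estimate $D_{\rm KL}(\pi,\pi^*)\le\mathcal F(\pi)-\mathcal F(\pi^*)$: one must verify that the Pythagorean identity for $I$-projections goes through in the adapted setting, i.e.\ that $\Pi_\bullet(\mu,\nu)$ is effectively cut out of $\mathcal P(\X\times\Y)$ by affine constraints (the marginal constraints together with the causality constraints, which can be encoded through linear test functions as in the duality discussion), that $\pi^*$ exists, is unique, and has finite relative entropy to $P$, and that the shadow $\pi$ is a legitimate competitor with finite entropy. Everything else is the triangle inequality, the Jensen/power-mean interpolation $W_q\le 2^{1/q-1/p}W_p$, and the transport-entropy assumptions $T(q,C_q)$ and $T_a(q,C_q)$.
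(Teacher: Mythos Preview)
Your proposal is correct and essentially identical to the paper's own proof: same shadow construction via bicausal-optimal kernels, same use of Lemma~\ref{lem:prop_shadow}(i)--(v) for part~\ref{it:quantitative_stab_1}, and for part~\ref{it:quantitative_stab_2} the same triangle-inequality split with one piece controlled by the shadow estimate and the other by $T(q,C_q)$ (resp.\ $T_a(q,C_q)$) combined with the Pythagorean inequality $D_{\rm KL}(\pi,\pi^*)\le\mathcal F(\pi)-\mathcal F(\pi^*)$, citing the same sources. Your identification of the Pythagorean step as the only point requiring care, and of the affine nature of the constraints defining $\Pi_\bullet(\mu,\nu)$ as the reason it goes through, matches the paper's reasoning exactly.
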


\begin{remark}
	We shortly mention that in the above Theorems \ref{thm:qual_stab} and \ref{thm:quantitative_stab}, some results are stated in less generality than possible to avoid overly lengthy statements.
	
	For instance in Theorem \ref{thm:qual_stab}, $\limsup_{n\rightarrow \infty} V_{c}(\mu^n, \nu^n, c) \leq V_{c}(\mu, \nu, c)$ holds already if only $W_{p, c}(\mu^n, \mu) \rightarrow 0$ and $W_{p, ac}(\nu^n, \nu) \rightarrow 0$, by Lemma \ref{lem:prop_shadow} (iii).
	Parts of the conclusions can also be strengthened in Theorem \ref{thm:qual_stab}. For instance, we can conclude that optimal couplings converge with respect to $\max\{W_{p, c}, W_{p, ac}\}$.
	
	In Theorem \ref{thm:quantitative_stab}, similar specifications can apply. For instance, in part (ii), if only $\mu, \nu$ satisfy $T_a(q, C_q)$ (but not $\tilde\mu, \tilde\nu$), we can still obtain the given bound for $W_{q, c}(\pi^*, \tilde{\pi}^*)$ (but no longer for $W_{q, ac}(\pi^*, \tilde{\pi}^*)$).
\end{remark}

\subsection{Discretization and LP formulation}
\label{subsec:discretization}
The baseline approach for solving infinite-dimensional optimization problems like optimal transport is via approximation by a discrete setting, thereby making the optimization problem finite-dimensional. In optimal transport, this translates to discretizing the marginal distributions. The stability results obtained in Section \ref{subsec:stab} show that the deviation of both optimal values and optimizers can be controlled when discretizing the marginals suitably, i.e., in a sense that they approximate the initial marginals well in $W_{p, bc}$. This section states the resulting linear program for discrete AOT. Because the structure for the causal, bicausal, and anticausal problems are similar, we only treat the causal case in this section.

Let $\mu, \nu \in \mathcal{P}(\mathbb{R}^N)$ have finite support, and let $n := |\supp(\mu)|, m = |\supp(\nu)|$ and for brevity, denote by $S_\mu := \supp(\mu)$, $S_\nu = \supp(\nu)$. We further write $\mu(x) := \mu(\{x\}), \nu(y) := \nu(\{y\})$. Let $[n] := \{1, \dots, n\}$ for $n\in\mathbb{N}$. For $x \in \mathbb{R}^N$, denote by $x_{1:t}$ the first $t$ entries and by $x_t$ its $t$-th entry, and similarly for $y \in \mathbb{R}^N$.
For all $t$, let $x_{1:t}^{(1)}, x_{1:t}^{(2)}, \dots, x_{1:t}^{(n_{1:t})}$ for some $n_{1:t} \in \mathbb{N}$ be an enumeration of the set $\{x_{1:t} : x \in \supp(\mu)\}$, and $x_{t}^{(1)}, \dots, x_t^{(n_t)}$ for some $n_t \in \mathbb{N}$ be an enumeration of $\{x_t : x \in \supp(\mu)\}$. We similarly define $y_{1:t}^{(i)}$ for $i \in [m_{1:t}]$ and $y_t^{(i)}$ for  $i \in [m_t]$ for some $m_{1:t}, m_t \in \mathbb{N}$.

The following states the LP formulation for causal optimal transport.
\begin{lemma}
	\label{lem:discretization}
	The optimization problem 
	\[
	\min_{\pi \in \Pi_c(\mu, \nu)} \int c \,d\pi
	\]
	is a linear program of the form
	\begin{mini}|l|
		{\substack{\pi_{x, y}:\\ x \in S_\mu, y \in S_\nu}}{\sum_{x \in S_\mu, y \in S_\nu} c(x, y)\,\pi_{x, y} }{}{}
		\addConstraint{\pi_{x, y}}{\in [0, 1],}{x \in S_\mu, y \in S_\nu,}
		\addConstraint{\sum_{x \in S_\mu} \pi_{x, y}}{=\nu(y),}{y \in S_\nu,}
  		\addConstraint{\sum_{y \in S_\nu} \pi_{x, y}}{=\mu(x),}{x \in S_\mu,}
		\addConstraint{\Big(\sum_{\substack{\tilde{x} \in S_\mu:\\ \tilde{x}_{1:t} = x_{1:t}^{(i)}}} \mu(\tilde{x})\Big) \Big(\sum_{\substack{\tilde{x} \in S_\mu:\\\tilde{x}_{1:t} = x_{1:t}^{(i)}\\ \tilde{x}_{t+1} = x_{t+1}^{(k)}}} \sum_{\substack{\tilde{y} \in S_\nu:\\\tilde{y}_{1:t} = y_{1:t}^{(j)}}} \pi_{\tilde{x}, \tilde{y}}\Big)}{}{\nonumber}
		\addConstraint{}{=\Big(\sum_{\substack{\tilde{x} \in S_\mu:\\ \tilde{x}_{1:t} = x_{1:t}^{(i)}\\\\ \tilde{x}_{t+1} = x_{t+1}^{(k)}}} \mu(\tilde{x})\Big) \Big(\sum_{\substack{\tilde{x} \in S_\mu:\\\tilde{x}_{1:t} = x_{1:t}^{(i)}}} \sum_{\substack{\tilde{y} \in S_\nu:\\\tilde{y}_{1:t} = y_{1:t}^{(j)}}} \pi_{\tilde{x}, \tilde{y}}\Big),}{}
		\addConstraint{}{k \in [n_{t+1}], j \in [m_{1:t}], i \in [n_{1:t}], t \in [N-1].}{}
	\end{mini}
	\begin{proof}
		Objective function and marginal constraints are clear. It is left to show that the final set of equality constraints encodes the marginal constraint for $\mu$ and causality. In other words, the last constraint has to encode
		\[
		\pi_{t+1, 0}^{x_{1:t}^{(i)}, y_{1:t}^{(j)}} = \mu_{t+1}^{x_{1:t}^{(i)}}
		\]
		for all $t \in [N-1], i \in n_{1:t}, j \in m_{1:t}$. Since all measures are discrete, this simply means that they put the same weight on each relevant atom $x_{t+1}^{(k)}, k \in [n_{t+1}]$, i.e.
		\[
		\pi_{t+1, 0}^{x_{1:t}^{(i)}, y_{1:t}^{(j)}}(x_{t+1}^{(k)}) = \mu_{t+1}^{x_{1:t}^{(i)}}(x_{t+1}^{(k)}).
		\]
		Noting \begin{align*}
		\mu_{t+1}^{x_{1:t}^{(i)}}(x_{t+1}^{(k)}) = \frac{\mu_{1:t+1}((x_{1:t}^{(i)}, x_{t+1}^{(k)}))}{\mu_{1:t}(x_{1:t}^{(i)})}, 
		\end{align*}
		and 
		\begin{align*}
		\mu_{1:t+1}((x_{1:t}^{(i)}, x_{t+1}^{(k)})) &= \sum_{\substack{\tilde{x} \in S_\mu:\\ \tilde{x}_{1:t} = x_{1:t}^{(i)}\\\\ \tilde{x}_{t+1} = x_{t+1}^{(k)}}} \mu(\tilde{x}), \\
		\mu_{1:t}(x_{1:t}^{(i)}) &= \sum_{\substack{\tilde{x} \in S_\mu:\\ \tilde{x}_{1:t} = x_{1:t}^{(i)}}} \mu(\tilde{x}),
		\end{align*}
		this shows (after multiplying by the denominator) how the terms involving $\mu$ occur in the given equality constraint.
		The terms involving $\pi$ work analogously: Defining $y_{t+1}^{(l)}$, $l \in m_{t+1}$ as an enumeration of $\{y_{t+1} : y \in S_\nu\}$, we have
		\[
		\pi_{t+1, 0}^{x_{1:t}^{(i)}, y_{1:t}^{(j)}}(x_{t+1}^{(k)}) = \sum_{l \in [m_{t+1}]} \pi_{t+1, t+1}^{x_{1:t}^{(i)}, y_{1:t}^{(j)}}(x_{t+1}^{(k)}, y_{t+1}^{(l)})
		\]
		Writing each summand $\pi_{t+1, t+1}^{x_{1:t}^{(i)}, y_{1:t}^{(j)}}(x_{t+1}^{(k)}, y_{t+1}^{(l)})$ as a fraction (analogously to $\mu$ above), we find that the denominator reads
		\[
		\pi_{1:t, 1:t}((x_{1:t}^{(i)}, y_{1:t}^{(j)})) = \Big(\sum_{\substack{\tilde{x} \in S_\mu:\\\tilde{x}_{1:t} = x_{1:t}^{(i)}}} \sum_{\substack{\tilde{y} \in S_\nu:\\\tilde{y}_{1:t} = y_{1:t}^{(j)}}} \pi_{\tilde{x}, \tilde{y}}\Big),
		\]  
		which is independent of $l$ and we can thus multiply by this term (leading to the right hand side term involving $\pi$ in the constraint). Finally, the sum of numerators reads
		\begin{align*}
		\sum_{l \in [m_{t+1}]} \pi_{1:t+1, 1:t+1}((x_{1:t}^{(i)}, x_{t+1}^{(k)}, y_{1:t}^{(k)}, y_{t+1}^{(l)})) &= \sum_{l \in [m_{t+1}]} \sum_{\substack{\tilde{x} \in S_\mu:\\\tilde{x}_{1:t} = x_{1:t}^{(i)}\\ \tilde{x}_{t+1} = x_{t+1}^{(k)}}} \sum_{\substack{\tilde{y} \in S_\nu:\\\tilde{y}_{1:t} = y_{1:t}^{(j)}\\\tilde{y}_{t+1} = y_{t+1}^{(l)}}} \pi_{\tilde{x}, \tilde{y}} \\
		&= \sum_{\substack{\tilde{x} \in S_\mu:\\\tilde{x}_{1:t} = x_{1:t}^{(i)}\\ \tilde{x}_{t+1} = x_{t+1}^{(k)}}} \sum_{\substack{\tilde{y} \in S_\nu:\\\tilde{y}_{1:t} = y_{1:t}^{(j)}}} \pi_{\tilde{x}, \tilde{y}},
		\end{align*}
		which yields the final term involving $\pi$ on the left hand side of the constraint.
	\end{proof}
\end{lemma}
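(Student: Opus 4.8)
The plan is to check that the linear program written in the statement has both the same objective and the same feasible set as $\min_{\pi \in \Pi_c(\mu,\nu)} \int c\,d\pi$. Since $\mu,\nu$ are finitely supported, a measure $\pi$ on $\X\times\Y$ with the prescribed marginals is determined by its atomic weights $\pi_{x,y} := \pi(\{(x,y)\})$, $x\in S_\mu$, $y\in S_\nu$, and conversely any nonnegative array summing to $1$ defines such a $\pi$. Under this identification $\int c\,d\pi = \sum_{x,y} c(x,y)\pi_{x,y}$, and $\pi \in \Pi(\mu,\nu)$ is exactly the first three groups of constraints ($\pi_{x,y}\in[0,1]$ together with the two marginal sums). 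So the whole burden is to prove that, given $\pi\in\Pi(\mu,\nu)$, the final family of equalities holds if and only if $\pi$ is causal.

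For this I would use the kernel characterisation of causality recorded right after Definition~\ref{def:causality}: when $\mu$ is the $\X$-marginal of $\pi$, causality is equivalent to $\pi^{X_{1:t},Y_{1:t}}_{t+1,0} = \mu_{t+1}^{X_{1:t}}$ holding $\pi$-a.s., for every $t\in[N-1]$. Because $\pi_{1:t,1:t}$ lives on the finite set of atoms $(x_{1:t}^{(i)},y_{1:t}^{(j)})$ and both kernels are probability vectors supported in the finite set $\{x_{t+1}^{(k)}: k\in[n_{t+1}]\}$, this is equivalent to the scalar identities
\[
\pi_{t+1,0}^{x_{1:t}^{(i)},y_{1:t}^{(j)}}(x_{t+1}^{(k)}) = \mu_{t+1}^{x_{1:t}^{(i)}}(x_{t+1}^{(k)}), \qquad k\in[n_{t+1}],\ i\in[n_{1:t}],\ j\in[m_{1:t}],\ t\in[N-1],
\]
for the atoms on which $\pi_{1:t,1:t}$ puts positive mass. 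To turn these into \emph{linear} constraints in $(\pi_{x,y})$, write each conditional probability as a ratio of joint probabilities, $\mu_{t+1}^{x_{1:t}^{(i)}}(x_{t+1}^{(k)}) = \mu_{1:t+1}((x_{1:t}^{(i)},x_{t+1}^{(k)}))/\mu_{1:t}(x_{1:t}^{(i)})$ and likewise for $\pi$ with $\pi_{1:t+1,1:t}$ over $\pi_{1:t,1:t}$, clear the denominators, and finally expand every joint mass as a sum of atomic weights over the obvious index set: $\mu_{1:t}(x_{1:t}^{(i)}) = \sum_{\tilde x\in S_\mu:\ \tilde x_{1:t}=x_{1:t}^{(i)}}\mu(\tilde x)$, $\mu_{1:t+1}((x_{1:t}^{(i)},x_{t+1}^{(k)}))=\sum_{\tilde x\in S_\mu:\ \tilde x_{1:t}=x_{1:t}^{(i)},\ \tilde x_{t+1}=x_{t+1}^{(k)}}\mu(\tilde x)$, $\pi_{1:t,1:t}((x_{1:t}^{(i)},y_{1:t}^{(j)}))=\sum_{\tilde x,\tilde y}\pi_{\tilde x,\tilde y}$ over the matching index set, and $\pi_{1:t+1,1:t}((x_{1:t}^{(i)},x_{t+1}^{(k)},y_{1:t}^{(j)}))$ after summing out the $y_{t+1}$-coordinate. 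Cross-multiplying produces precisely the displayed constraint.

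I do not expect a genuine obstacle here; the one point that needs care is the behaviour on zero-mass atoms. If $\pi_{1:t,1:t}((x_{1:t}^{(i)},y_{1:t}^{(j)}))=0$, causality (``$\pi$-a.s.'') imposes nothing on the corresponding conditional kernel, and indeed the cross-multiplied constraint degenerates to $0=0$, so imposing it for all $(i,j)$ rather than only the relevant ones costs nothing; conversely, on atoms of positive mass one may divide back through to recover the original conditional-probability equality, and $\mu_{1:t}(x_{1:t}^{(i)})>0$ automatically since $x_{1:t}^{(i)}$ was enumerated from $\supp(\mu)$, so the reverse implication is unobstructed. Collecting these observations shows the feasible set of the LP equals $\Pi_c(\mu,\nu)$, which completes the argument; the causal, anticausal and bicausal variants are entirely analogous and are omitted.
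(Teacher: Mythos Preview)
Your proposal is correct and follows essentially the same route as the paper: identify the objective and marginal constraints as immediate, restate causality via the kernel identity $\pi_{t+1,0}^{x_{1:t}^{(i)},y_{1:t}^{(j)}}=\mu_{t+1}^{x_{1:t}^{(i)}}$, write each conditional probability as a ratio of joint masses, expand those as sums of atomic weights, and cross-multiply to obtain the displayed linear constraint. Your treatment of the zero-mass atoms (showing the constraint degenerates to $0=0$ when $\pi_{1:t,1:t}((x_{1:t}^{(i)},y_{1:t}^{(j)}))=0$, and that $\mu_{1:t}(x_{1:t}^{(i)})>0$ automatically) is a detail the paper leaves implicit, so in this respect your write-up is slightly more careful.
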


\subsection{Numerical example: approximation of adapted Wasserstein distance}
This section aims to illustrate both the LP formulation for bicausal optimal transport problems given by Lemma \ref{lem:discretization} and the qualitative stability result Theorem \ref{thm:qual_stab}, which justifies the approximation of continuous problems via discretization. The code to reproduce the experiments is available at \url{https://github.com/stephaneckstein/aotnumerics}.

\begin{figure}
	\begin{minipage}[b]{0.5\textwidth}
		\includegraphics[width=1.05\textwidth]{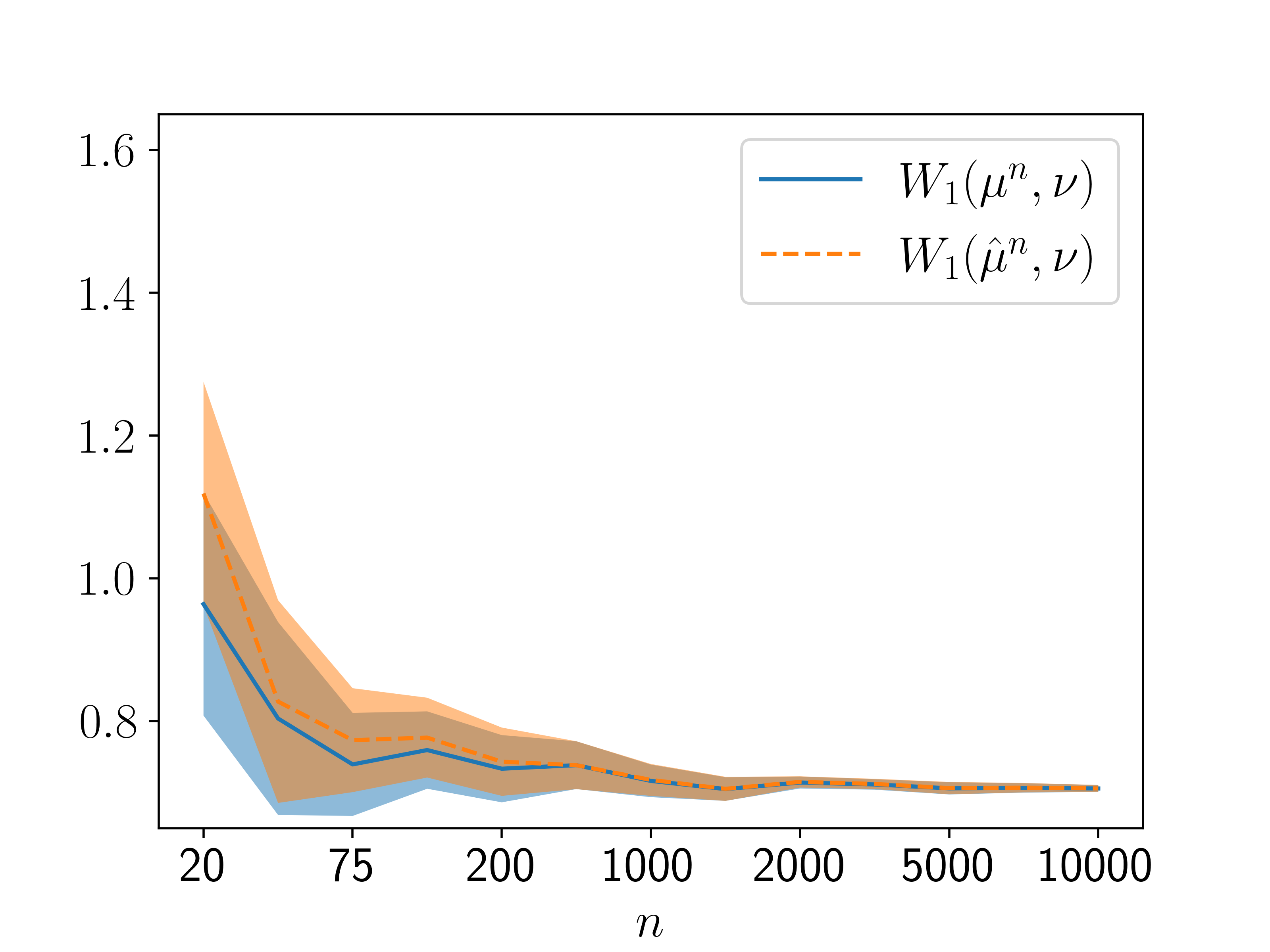}
	\end{minipage}\hspace{-5mm}
	\hfill
	\begin{minipage}[b]{0.5\textwidth} 
		\includegraphics[width=1.05\textwidth]{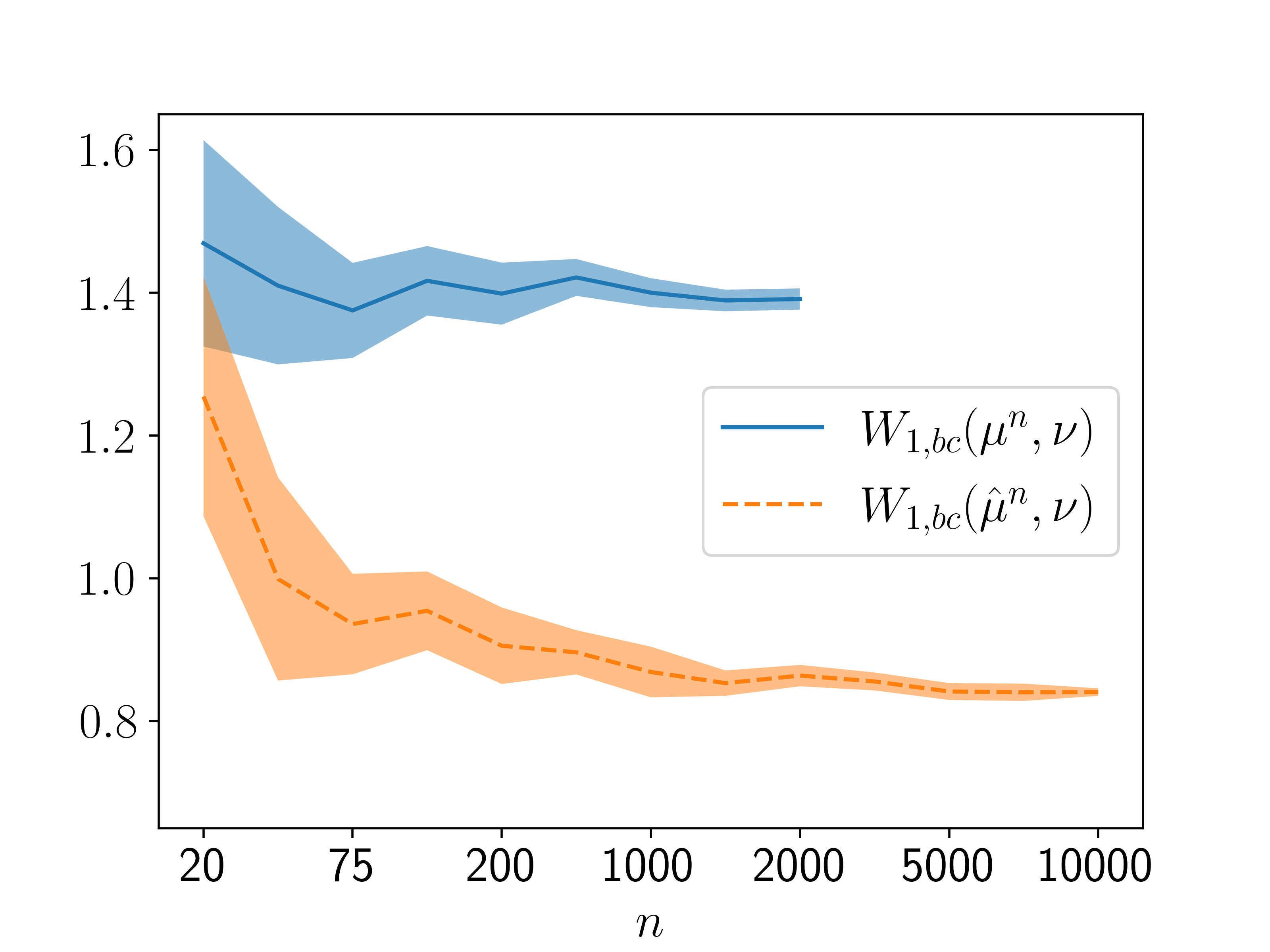}
	\end{minipage}\hspace{-5mm}
	\hfill
	\label{fig:LP_illustration}
 	\caption{Approximation of Wasserstein distance $W_1(\mu, \nu)$ (left) and adapted Wasserstein $W_{1, bc}(\mu, \nu)$ (right) distances between a discrete time Brownian motion $\mu$ and trinomial model $\nu$. The continuous distribution $\mu$ is approximated empirically using an increasing number of sample points $n$, either by the direct empirical measure $\mu^n$ or its adapted version $\hat{\mu}^n$. The reported values are averages over 10 runs using different sample points for $\mu^n$ and $\hat{\mu}^n$, and the error bands are sized according to the standard deviation over these runs. The calculation of $W_{1, bc}(\mu^n, \nu)$ was stopped after $n=2000$ due to long runtimes.}
\end{figure}

Consider two marginals $\mu, \nu \in \mathcal{P}(\mathbb{R}^3)$, where $\mu$ is the discrete time restriction of a Brownian motion starting at $0$, and $\nu$ is the distribution of a trinomial model starting at $0$. That is, the transition kernels $\mu_t^x$ are a normal distribution with mean $x$ and variance $1$, while $\nu_t^x = \frac{1}{3}(\delta_{x-1} + \delta_x + \delta_{x+1})$. The goal is to numerically calculate $W_{1, bc}(\mu, \nu)$ and to compare it with $W_1(\mu, \nu)$.

To calculate these distances, we assume that we know the precise form of $\nu$, but only $n$ i.i.d.~sample paths $x^i = (x_1^{i}, x_2^{i}, x_3^{i}) \sim \mu$, $i=1, \dots, n$. We denote by $\mu^n := \frac{1}{n} \sum_{i=1}^n \delta_{x^i}$ the empirical measure of $\mu$. We emphasize that $W_{1, bc}(\mu^n, \mu)$ does not converge to zero (cf.~\cite{backhoff2020estimating}), and thus to apply Theorem \ref{thm:qual_stab}, $\mu^n$ cannot be used. Indeed, due to the continuous nature of $\mu$, all disintegrations of $\mu^n$ are almost surely deterministic. Thus under $\mu^n$, the transition kernel going forward from $x_2^{(i)}$ is simply the Dirac measure at $x_3^{(i)}$, i.e., $\left(\mu^n\right)_3^{x_2^{(i)}} = \delta_{x_3^{(i)}}$, which is far from the normal distribution given by $\mu_3^{x_2^{(i)}}$. We thus use an adapted empirical measure $\hat{\mu}^n$, where approximately $\sqrt{n}$ points at each time step $t=1, 2, 3$ are clustered together to allow for an empirical approximation which has non-deterministic transition kernels (cf.~\cite[Definition 1.2]{backhoff2020estimating} for a precise construction).

All computations are performed via a direct implementation of the resulting linear program as in Lemma \ref{lem:discretization} (adjusted to the standard and bicausal case) using Gurobi \cite{gurobi}.
The results are illustrated in Figure \ref{fig:LP_illustration}. We see that for the calculation of $W_1(\mu, \nu)$, it does not matter whether one uses $\mu^n$ or $\hat{\mu}^n$, as the approximation using both these empirical measures converges to the same value for large $n$. However, when calculating $W_{1, bc}(\mu, \nu)$, it is crucial to use $\hat{\mu}^n$ so that Theorem \ref{thm:qual_stab} guarantees convergence $W_{1, bc}(\mu^n, \nu) \rightarrow W_{1, bc}(\mu, \nu)$ for $n \rightarrow \infty$. While in this particular case, the conclusion could already be reached with the triangle inequality, Theorem \ref{thm:qual_stab} of course applies to general bicausal optimal transport problems, and not just to $W_{1, bc}(\mu, \nu)$.
Regarding computational times, we note that the most relevant cases, calculation of $W_1(\mu^n, \nu)$ and $W_{1, bc}(\hat{\mu}^n, \nu)$, incurred runtimes within the same order of magnitude, e.g., for $n=1000, 5000, 10000$ the average runtimes to calculate $W_1(\mu^n, \nu)$ were 2.55, 3.90 and 5.25 seconds, while for $W_{1, bc}(\hat{\mu}^n, \nu)$ they were 5.63, 10.22 and 14.38 seconds, all using a single $3.1$GHz CPU. 

\section{Approximation of adapted transport by its entropic version}
\label{sec:conv_reg}
In this section, we first show how the results for the shadow give a very simple methodology for smoothing a given coupling, while the smoothed version keeps the same marginal distributions. Smoothness is understood with respect to the product measure of the marginals in this case. While to the best of the authors' knowledge, the presented method is novel even for non-adapted transport problems, the method extends readily to adapted problems, yielding smoothed versions of couplings with the same marginals \textsl{and} that are adapted (causal, bicausal, or anticausal) in the same sense as the non-smoothed coupling.

The following results are stated under the assumption that the marginals can be discretely approximated in $W_{p, bc}$, which is satisfied in many cases (see, e.g., \cite[Theorem 1.3]{backhoff2020estimating}, \cite[Theorem 5.5]{bartl2021wasserstein}, \cite[Lemma 4.6]{beiglbock2018denseness}). We suspect it holds unconditionally in Polish spaces, which is however not a focus of this work, and thus left as an assumption.
\begin{lemma}
	\label{lemma:shadowshadow}
	Let $p \in [1, \infty)$ and $\mu \in \mathcal{P}_p(\X), \nu \in \mathcal{P}_p(\Y)$. Assume that there exist discrete measures $\mu^n \in \mathcal{P}(\X), \nu^n \in \mathcal{P}(\Y)$ such that \[\lim_{n\rightarrow \infty} W_{p, bc}(\mu^n, \mu) = \lim_{n\rightarrow \infty} W_{p, bc}(\nu^n, \nu) = 0.\]
	Let $\pi \in \Pi_{\bullet}(\mu, \nu)$. Then there exist $\pi^n \in \Pi_\bullet(\mu, \nu)$ such that $\lim_{n\rightarrow \infty} W_{p, c}(\pi, \pi^n) = 0$ and there are constants $C_n > 0$ such that $\frac{d\pi^n}{d P} \leq C_n$ holds $P$-almost surely.
	\begin{proof}
		Fix $n \in \mathbb N$, and consider $W_{p,bc}$-optimal couplings denoted by
		\begin{align*}
		\mu \otimes K^\mu =& \mu^n \otimes \tilde K^\mu \in \Pi_{bc}(\mu,\mu^n),
		\\
		\nu \otimes K^\nu =& \nu^n \otimes \tilde K^\nu \in \Pi_{bc}(\nu,\nu^n).
		\end{align*}
		First, we write $\hat{\pi}^n \in \Pi_\bullet(\mu^n, \nu^n)$ for the $(K^\mu, K^\nu)$-shadow of $\pi$.
		Next, we define $\pi^n \in \Pi_\bullet(\mu,\nu)$ as the $(\tilde{K}^\mu, \tilde{K}^\nu)$-shadow of $\hat{\pi}^n$. 
		Then, by Corollary \ref{cor:triangle} we have
		\[
		W_{p, c}(\pi, \pi^n) \leq W_{p, c}(\pi, \hat{\pi}^n) + W_{p, c}(\hat{\pi}^n, \pi^n),
		\]
		which, together with Lemma \ref{lem:prop_shadow}, yields the first claim. 
		
		To see the second claim, note that, since $\mu^n$ and $\nu^n$ are discrete (and so is the product measure $\mu^n \otimes \nu^n$), there is $C_n > 0$ with
		\[
		\frac{d\hat{\pi}^n}{d \mu^n \otimes \nu^n} \le C_n.
		\]
		Write $\tilde{K} := \tilde{K}^\mu \otimes \tilde{K}^\nu$, then we get
		\[
		\frac{d\hat{\pi}^n \otimes \tilde K}{d(\mu^n \otimes \nu^n) \otimes \tilde K} = \frac{d\hat{\pi}^n}{d \mu^n \otimes \nu^n} \le C_n.
		\]
		Since $\pi^n$ and $P$ are the second marginals of $\hat{\pi}^n \otimes \tilde K$ and $(\mu^n \otimes \nu^n) \otimes \tilde K$, respectively, $\frac{d\pi^n}{d P}$ is a marginal density of $\frac{d\hat{\pi}^n \otimes \tilde K}{d(\mu^n \otimes \nu^n) \otimes \tilde K}$ and thus almost surely bounded by $C_n$ as well.
	\end{proof}
\end{lemma}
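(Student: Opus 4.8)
The plan is to run a \emph{double shadow} construction: first transport $\pi$ onto the discrete marginals $(\mu^n,\nu^n)$ via a shadow (this is where discreteness gives a bounded density), then transport the resulting coupling back onto $(\mu,\nu)$ via a second shadow (this preserves the density bound under marginalization), and finally control the error of the round trip with the triangle inequality for $W_{p,c}$. Concretely, fix $n$ and choose $W_{p,bc}$-optimal couplings
\[
\mu \otimes K^\mu = \mu^n \otimes \tilde K^\mu \in \Pi_{bc}(\mu,\mu^n),
\qquad
\nu \otimes K^\nu = \nu^n \otimes \tilde K^\nu \in \Pi_{bc}(\nu,\nu^n),
\]
which exist by weak compactness of $\Pi_{bc}$ and lower semicontinuity of $d_\X^p,d_\Y^p$ (cf.\ Remark~\ref{rem:existence_optimizers}). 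Let $\hat\pi^n \in \Pi(\mu^n,\nu^n)$ be the $(K^\mu,K^\nu)$-shadow of $\pi$, and let $\pi^n \in \Pi(\mu,\nu)$ be the $(\tilde K^\mu,\tilde K^\nu)$-shadow of $\hat\pi^n$.

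For adaptedness, I would use that a bicausal coupling is simultaneously causal and anticausal, and that bicausality is symmetric under swapping the two coordinate blocks, so each of $\mu\otimes K^\mu$, $\nu\otimes K^\nu$, $\mu^n\otimes\tilde K^\mu$, $\nu^n\otimes\tilde K^\nu$ is both causal and anticausal in whatever orientation is needed. Then Lemma~\ref{lem:prop_shadow}(iii) (for $\bullet=c$), (iv) (for $\bullet=ac$), or both (for $\bullet=bc$) give $\hat\pi^n \in \Pi_\bullet(\mu^n,\nu^n)$ and, applied a second time, $\pi^n \in \Pi_\bullet(\mu,\nu)$. For convergence, writing $K(x,y)=K^\mu(x)\otimes K^\nu(y)$, the coupling $\theta=\pi\otimes K \in \Pi(\pi,\hat\pi^n)$ is causal by Lemma~\ref{lem:prop_shadow}(v), so
\[
W_{p,c}(\pi,\hat\pi^n)^p \le \int\!\!\int d_\X^p(x,\tilde x) + d_\Y^p(y,\tilde y)\, K(x,y)(d\tilde x,d\tilde y)\,\pi(dx,dy) = C_{K^\mu} + C_{K^\nu} = W_{p,bc}(\mu,\mu^n)^p + W_{p,bc}(\nu,\nu^n)^p,
\]
by Lemma~\ref{lem:prop_shadow}(i), and the analogous bound holds for $W_{p,c}(\hat\pi^n,\pi^n)^p$ with the roles of $(\mu,\nu)$ and $(\mu^n,\nu^n)$ exchanged. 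Applying Corollary~\ref{cor:triangle} with $\X\times\Y$ in place of $\X$ then yields $W_{p,c}(\pi,\pi^n) \le W_{p,c}(\pi,\hat\pi^n) + W_{p,c}(\hat\pi^n,\pi^n) \to 0$.

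For the density bound, $\hat\pi^n$ is supported on the finite set $\supp(\mu^n)\times\supp(\nu^n)$, hence $\hat\pi^n \ll \mu^n\otimes\nu^n$ with $\tfrac{d\hat\pi^n}{d(\mu^n\otimes\nu^n)} \le C_n$ for some $C_n>0$ (e.g.\ $C_n = (\min_x\mu^n(\{x\})\min_y\nu^n(\{y\}))^{-1}$). With $\tilde K(x,y)=\tilde K^\mu(x)\otimes\tilde K^\nu(y)$, the density of $\hat\pi^n\otimes\tilde K$ with respect to $(\mu^n\otimes\nu^n)\otimes\tilde K$ is just $\tfrac{d\hat\pi^n}{d(\mu^n\otimes\nu^n)}$ precomposed with the projection, hence still $\le C_n$; since $\pi^n$ and $P$ are the respective second marginals of these two measures, $\tfrac{d\pi^n}{dP}$ is the conditional expectation of that joint density given the marginal $\sigma$-algebra, and is therefore $\le C_n$ $P$-a.s. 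The only genuinely delicate points are the bookkeeping in the adaptedness step (checking that bicausal optimizers supply exactly the causal/anticausal hypotheses demanded by Lemma~\ref{lem:prop_shadow} in \emph{both} shadow steps) and the last measure-theoretic remark that passing to a marginal preserves an almost-sure upper bound on a Radon--Nikodym density; everything else is a direct bookkeeping application of the shadow properties.
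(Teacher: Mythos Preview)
Your proposal is correct and follows essentially the same double-shadow construction as the paper: shadow $\pi$ onto the discrete marginals via $(K^\mu,K^\nu)$, shadow back via $(\tilde K^\mu,\tilde K^\nu)$, control $W_{p,c}$ by the triangle inequality and Lemma~\ref{lem:prop_shadow}, and obtain the density bound by noting that marginalization preserves the pointwise bound inherited from the discrete step. Your write-up supplies a few more details (the explicit choice of $C_n$, the conditional-expectation justification for the marginal density, and the bookkeeping that bicausal optimizers furnish both the causal and anticausal hypotheses needed in Lemma~\ref{lem:prop_shadow}(iii)--(v)), but there is no methodological difference.
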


While the following theorem is only stated for entropically regularized optimal transport, the strategy of proof extends readily to regularization by other divergences.
\begin{theorem}
	\label{thm:reg_conv}
	Let $p \in [1, \infty)$, $\mu \in \mathcal{P}(\X)$ and $\nu \in \mathcal{P}(\Y)$. 
	Let $c$ be continuous and have growth of order at most $p$. 
	Assume that there exist discrete measures $\mu^n \in \mathcal{P}(\X), \nu^n \in \mathcal{P}(\Y)$ such that \[\lim_{n\rightarrow \infty} W_{p, bc}(\mu^n, \mu) = \lim_{n\rightarrow \infty} W_{p, bc}(\nu^n, \nu) = 0.\]
	Then
	\[
	\lim_{\varepsilon \rightarrow 0} E_{\bullet}^\varepsilon(\mu, \nu, c) = V_\bullet(\mu, \nu, c)
	\]
	and accumulation points of $E_\bullet^\varepsilon$-optimizers (for $\varepsilon \searrow 0$) are optimizers of $V_\bullet(\mu, \nu, c)$.
	\begin{proof}
		Clearly, $E_\bullet^\varepsilon(\mu, \nu, c) \ge V_\bullet(\mu,\nu,c)$, thus
		\[
		\liminf_{\varepsilon \searrow 0} E_\bullet^\varepsilon(\mu, \nu, c) \geq V_\bullet(\mu, \nu, c).
		\]
		To see the reverse direction, pick an optimizer $\pi^*$ of $V_\bullet(\mu, \nu, c)$.
		We choose $\pi^n \in \Pi_{\bullet}(\mu, \nu)$, $n \in \mathbb N$ as the sequence given by Lemma \ref{lemma:shadowshadow} applied w.r.t.\ $\pi^\ast$, then
		\begin{align*}
		\limsup_{\varepsilon \searrow 0} E_\bullet^\varepsilon(\mu,\nu,c)
		&\le
		\liminf_{n \to \infty} 
		\limsup_{\varepsilon \searrow 0} 
		\int c \, d\pi^n + \varepsilon D_{KL}(\pi^n, P)
		\\
		&=
		\liminf_{n \to \infty} \int c \, d\pi^n = \int c \, d\pi^\ast = V_\bullet(\mu,\nu,c),
		\end{align*}
		where we recall that $D_{KL}(\pi^n,P) \le \log(C_n)$ for some constants $C_n > 0$ that are also provided by Lemma \ref{lemma:shadowshadow}.
		Note that the second last equality is due to $W_p$-convergence of $(\pi^n)_{n \in \mathbb N}$ to $\pi^\ast$.
		The final statement of the theorem is a simple consequence of convergence of the values (that was shown above) and the fact that $\Pi_\bullet(\mu,\nu)$ is closed by Lemma \ref{lem:all sets of couplings are compact}.
	\end{proof}
\end{theorem}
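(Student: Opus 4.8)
The plan is to sandwich the regularized value between $V_\bullet$ and itself, i.e.\ to show
$V_\bullet(\mu,\nu,c) \le \liminf_{\varepsilon\searrow 0} E_\bullet^\varepsilon(\mu,\nu,c) \le \limsup_{\varepsilon\searrow 0} E_\bullet^\varepsilon(\mu,\nu,c) \le V_\bullet(\mu,\nu,c)$, and then to deduce the statement on optimizers from compactness and lower semicontinuity. The first inequality is immediate: relative entropy is nonnegative, so $\int c\,d\pi + \varepsilon D_{\rm KL}(\pi,P) \ge \int c\,d\pi$ for every $\pi \in \Pi_\bullet(\mu,\nu)$, and taking the infimum over $\Pi_\bullet(\mu,\nu)$ gives $E_\bullet^\varepsilon(\mu,\nu,c) \ge V_\bullet(\mu,\nu,c)$ for all $\varepsilon>0$.

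For the upper bound I would fix an optimizer $\pi^\ast$ of $V_\bullet(\mu,\nu,c)$ (which exists by the usual weak-compactness argument, cf.\ Remark~\ref{rem:existence_optimizers}) and feed it into Lemma~\ref{lemma:shadowshadow}. This yields couplings $\pi^n \in \Pi_\bullet(\mu,\nu)$ with $W_{p,c}(\pi^n,\pi^\ast)\to 0$ and constants $C_n$ with $\frac{d\pi^n}{dP}\le C_n$ $P$-a.s., hence $D_{\rm KL}(\pi^n,P)=\int \log\frac{d\pi^n}{dP}\,d\pi^n \le \log C_n$. For fixed $n$, feasibility of $\pi^n$ gives $E_\bullet^\varepsilon(\mu,\nu,c)\le \int c\,d\pi^n + \varepsilon\log C_n$, so $\limsup_{\varepsilon\searrow 0} E_\bullet^\varepsilon(\mu,\nu,c) \le \int c\,d\pi^n$; letting $n\to\infty$ and using $W_{p,c}\ge W_p$ together with continuity of $\pi\mapsto\int c\,d\pi$ on $\Pi(\mu,\nu)$ (discussed below), one concludes $\limsup_{\varepsilon\searrow 0} E_\bullet^\varepsilon(\mu,\nu,c) \le \int c\,d\pi^\ast = V_\bullet(\mu,\nu,c)$.

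For the statement on optimizers, let $\varepsilon_k\searrow 0$, let $\pi^{\varepsilon_k}$ be the $E_\bullet^{\varepsilon_k}$-minimizers, and let $\bar\pi$ be an accumulation point. Since $\Pi_\bullet(\mu,\nu)$ is ($W_p$-)compact by Lemma~\ref{lem:all sets of couplings are compact}, we have $\bar\pi \in \Pi_\bullet(\mu,\nu)$; passing to a subsequence realizing $\bar\pi$ and using lower semicontinuity of $\pi\mapsto\int c\,d\pi$ together with $D_{\rm KL}\ge 0$ and the value convergence just established, $\int c\,d\bar\pi \le \liminf_k \big(\int c\,d\pi^{\varepsilon_k} + \varepsilon_k D_{\rm KL}(\pi^{\varepsilon_k},P)\big) = \liminf_k E_\bullet^{\varepsilon_k}(\mu,\nu,c) = V_\bullet(\mu,\nu,c)$, so $\bar\pi$ is $V_\bullet$-optimal.

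The whole argument is short once Lemma~\ref{lemma:shadowshadow} is available, and that lemma — smoothing a prescribed adapted optimizer into feasible couplings with bounded $P$-density — is the one genuinely nontrivial ingredient; I do not expect any real obstacle beyond it. The only additional point needing care is the unboundedness of $c$: to justify $\int c\,d\pi^n \to \int c\,d\pi^\ast$ and the lower semicontinuity used above, one observes that all the couplings in play share the fixed marginals $\mu\in\mathcal{P}_p(\X)$, $\nu\in\mathcal{P}_p(\Y)$ — which is forced by the standing assumption that $W_{p,bc}(\mu^n,\mu), W_{p,bc}(\nu^n,\nu)\to 0$ with $\mu^n,\nu^n$ discrete — so that $d_\X(\hat x,\cdot)^p + d_\Y(\hat y,\cdot)^p$ has $\pi$-integral independent of $\pi\in\Pi(\mu,\nu)$ and is in particular uniformly integrable along any such sequence; the growth-of-order-$p$ bound on the continuous cost then upgrades weak convergence to convergence (resp.\ lower semicontinuity) of $\int c\,d\pi$.
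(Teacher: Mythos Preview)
Your proof is correct and follows essentially the same approach as the paper: the trivial lower bound from $D_{\rm KL}\ge 0$, the upper bound via Lemma~\ref{lemma:shadowshadow} applied to an optimizer $\pi^\ast$ to produce feasible $\pi^n$ with bounded density (hence finite entropy) and $W_p$-convergence to $\pi^\ast$, and the accumulation-point argument via compactness of $\Pi_\bullet(\mu,\nu)$. You actually spell out a couple of points the paper leaves implicit, namely why $\int c\,d\pi^n \to \int c\,d\pi^\ast$ despite $c$ being unbounded and why $\mu,\nu$ lie in $\mathcal{P}_p$.
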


\section{Duality}
\label{subsec:duality}
In this section we derive the duality results which will be used to study Sinkhorn's algorithm.
To state the dual formulations of both $V_\bullet$ and $E_\bullet^\varepsilon$ we introduce functional spaces:
First, we define the classical functional spaces
\begin{align*}
\tilde{\mathcal{S}}_{\X} 
:=
\left\{s_{\X}(x, y) = f(x) \colon f \in C_b(\X)\right\}, ~~~
\tilde{\mathcal{S}}_{\Y} 
:= \{s_{\Y}(x, y) = g(y) \colon g \in C_b(\Y)\},
\end{align*}
that determine the marginal constraints in optimal transport.
We denote by $\mathcal S_\X$ and $\mathcal S_\Y$ the sets where $f$ resp.\ $g$ can be even bounded and measurable.
Next, we define functional spaces of continuous functions that characterize causality.
For this reason, we have to refine the topologies on $\X$:
For $\mu \in \mathcal{P}(\X)$, $\nu \in \mathcal{P}(\Y)$ and $t \in \{1,\ldots,N-1\}$, we fix measurable kernels $K_{\mu, t} : \X \rightarrow \mathcal{P}(\X_{t+1:N})$ and $K_{\nu, t} : \Y \rightarrow \mathcal{P}(\Y_{t+1:N})$ given by
\begin{align}
\label{eq:kernel_for_refined_mu}
K_{\mu,t}(x) = \mu_{t+1:N}^{x_{1:t}} ~~\mu\text{-a.s.},
\\
\label{eq:kernel_for_refined_nu}
K_{\nu,t}(y) = \nu_{t+1:N}^{y_{1:t}}~~\nu\text{-a.s.}.
\end{align}
Write $\tau_{\X,\mu}$ and $\tau_{\Y,\nu}$ for the topology on $\X$ and $\Y$, that is obtained by Lemma \ref{lem:refined topology} and makes the kernels \eqref{eq:kernel_for_refined_mu} and \eqref{eq:kernel_for_refined_nu} for all $t=1, \dots, T$ respectively, continuous.
Again, by Lemma \ref{lem:refined topology} we have, for $t \in \{2,\ldots,N\}$,
\begin{multline*}
\tilde{\mathcal A}_{\X,\mu,t} := 
\Big\{ 
f_t(x,y) = a_t(x_{1:t},y_{1:t-1}) -
\int a_t(x_{1:t-1}, \tilde x_t, y_{1:t-1}) \, K_{\mu,t}(x,d\tilde x_{t})
\\
\colon
a_t \in C_b(\X_{1:t} \times \Y_{1:t-1})
\Big\},
\end{multline*}
is a subset of $C_b(\X^\mu \times \Y)$, and
\begin{multline*}
\tilde{\mathcal A}_{\Y,\nu,t} := 
\Big\{ 
g_t(x,y) = a_t(x_{1:t-1},y_{1:t}) -
\int a_t(x_{1:t-1}, y_{1:t-1}, \tilde y_t) \, K_{\nu,t}(y,d\tilde y_{t})
\\
\colon
a_t \in C_b(\X_{1:t-1} \times \Y_{1:t})
\Big\},
\end{multline*}
is a subset of $C_b(\X \times \Y^\nu)$.
We write
\begin{align*}
\tilde{\mathcal A}_{\X,\mu,1} :=& 
\left\{  
f_1(x,y) = a_1(x_1) \colon a_1 \in C_b(\X_1)
\right\},
\\
\tilde{\mathcal A}_{\Y,\nu,1} :=&
\left\{  
g_1(x,y) = a_1(y_1) \colon a_1 \in C_b(\Y_1)
\right\},
\end{align*}
and let $\mathcal{A}_{\X,\mu,t}$ and $\mathcal{A}_{\Y,\nu,t}$ be defined analogously as above when $a_t$ is allowed to be bounded and measurable.
We define the functional spaces
\begin{align*}
\tilde{\mathcal S}_{\X,\mu} :=&
\left\{
s(x,y) = f_1(x_1) + \sum_{t = 2}^N f_t(x_{1:t}, y_{1:t-1}) \colon f_t \in \tilde{\mathcal{A}}_{\X,\mu,t}
\right\},
\\
\tilde{\mathcal S}_{\Y,\nu} :=&
\left\{
s(x,y) = g_1(y_1) + \sum_{t = 2}^N g_t(x_{1:t-1}, y_{1:t}) \colon g_t \in \tilde{\mathcal{A}}_{\Y,\nu,t}
\right\},
\end{align*}
and write again $\mathcal{S}_{\X,\mu}$ and $\mathcal{S}_{\Y,\nu}$ for the corresponding spaces when $\tilde{\mathcal{A}}_{\X,\mu,t}$ and $\tilde{\mathcal{A}}_{\Y,\nu,t}$ are replaced by $\mathcal{A}_{\X,\mu,t}$ and $\mathcal{A}_{\Y,\nu,t}$ respectively.
We finally define 
\begin{align*}
\mathcal H := \mathcal S_\X \oplus \mathcal S_\Y 
&\supseteq
\tilde{\mathcal H} := \tilde{\mathcal S}_\X \oplus \tilde{\mathcal S}_\Y, 
\\
\mathcal H_c := \mathcal S_{\X,\mu} \oplus \mathcal S_\Y
&\supseteq
\tilde{\mathcal{H}}_c := \tilde{\mathcal S}_{\X,\mu} \oplus \tilde{\mathcal S}_\Y,
\\
\mathcal H_{ac} := \mathcal S_\X \oplus \mathcal S_{\Y,\nu}
&\supseteq
\tilde{\mathcal{H}}_{ac} := \tilde{\mathcal S}_\X \oplus \tilde{\mathcal S}_{\Y,\nu},
\\
\mathcal H_{bc} := \mathcal S_{\X,\mu} \oplus \mathcal S_{\Y,\nu}
&\supseteq
\tilde{\mathcal{H}}_{bc} := \tilde{\mathcal S}_{\X,\mu} \oplus \tilde{\mathcal S}_{\Y,\nu}.
\end{align*}

\begin{remark}
	\label{rem:decomp}
	We remark that $\mathcal H \subseteq \mathcal H_\bullet$:
	Indeed, let $f \in L^\infty(\X) \hat= \mathcal S_\X$.
	We set iteratively $a_N(x) := f(x)$, for $t \in \{2,\ldots,N\}$,
	\[
	f_t(x) := a_t(x_{1:t}) - \int a_t(x_{1:t-1}, \tilde x_t) \, \mu^{x_{1:t-1}}_t(d \tilde x_t)
	\in
	\mathcal A_{\X,\mu,t},
	\]
	and $a_{t - 1}(x_{1:t-1}) := \int a_t(x_{1:t-1},\tilde x_t) \, \mu^{x_{1:t-1}}_t(d \tilde x_t)$,
	and $f_1(x) := a_1(x_1)$.
	Then,
	\[
	f(x) = \sum_{t = 1}^N f_t(x) \in \mathcal S_{\X,\mu}.
	\]
	Similarly, we see that $\mathcal S_\Y \subseteq \mathcal S_{\Y,\nu}$.
\end{remark}

The following states the duality results which will used in Section \ref{sec:sinkhorn}.
\begin{proposition}
	\label{prop:duality}
	Let $c \in C_b(\X \times \Y)$, $\mu \in \mathcal P(\X)$ and $\nu \in \mathcal P(\Y)$.
	Then
	\begin{align}
	\label{eq:transport duality}
	V_{\bullet}(\mu, \nu, c) 
	&=
	\sup_{s \in \mathcal{H}_\bullet \colon s\leq c} 
	\int s \,dP,
	\\
	\label{eq:entropic transport duality}
	E^\varepsilon_{\bullet}(\mu, \nu, c) 
	&= 
	\sup_{s \in \mathcal{H}_\bullet} 
	\int s - \varepsilon \exp\Big(\frac{s - c}{\varepsilon}\Big) \,dP + \varepsilon.
	\end{align}
	\begin{proof}
		For classical OT, \eqref{eq:transport duality} is of course well known (see, e.g., \cite{villani2009optimal}).
		We first show \eqref{eq:transport duality} in full generality, where we reason similarly as in \cite[Theorem 4.3]{acciaio2021cournot}:
		By Lemma \ref{lem:causality_char} a probability $\pi \in \mathcal P(\X \times \Y)$ with first marginal $\mu$ is causal if and only if, for all $t \in \{2,\ldots,N\}$,
		\[
		\int f_t \, d\pi = 0 \quad \forall f_t \in \tilde{\mathcal{A}}_{\X,\mu,t}.
		\]
		Hence, $\pi \in \Pi_\bullet(\mu,\nu)$ if and only if
		\[
		\int s \, d\pi = \int s \, dP\quad \forall s \in \tilde{\mathcal H}_\bullet.
		\]
		Note that we may replace $\tilde{\mathcal H}_\bullet$ by $\mathcal H_\bullet$ in the display above, which yields weak duality: the left-hand side in \eqref{eq:transport duality} and \eqref{eq:entropic transport duality} dominates the respective right-hand side.
		
		In order to show the reverse inequalities, we abbreviate
		\begin{align*}
		v(\pi,s) :=& \int c + s \, d\pi - \int s \, dP,
		\end{align*} 
		First, note that $v$ is linear in both arguments. 
		As, for all $s \in \tilde{\mathcal{H}}_\bullet$, $c + s \in C_b(\X^\mu \times \Y^\nu)$ we have (lower semi-)continuity w.r.t.\ weak convergence of measures of $\pi \mapsto v(\pi,s)$ on $\mathcal P(\X^\mu \times \Y^\nu)$.
		Since $\X^\mu$ and $\Y^\nu$ have the same Borel sets as $\X$ and $\Y$ respectively, we find that $\mu$ and $\nu$ are Borel measures on the Polish spaces $\X^\mu$ and $\Y^\nu$.
		Thus, we have that $\Pi(\mu,\nu)$ is a compact subset of $\mathcal P(\X^\mu \times \Y^\nu)$.
		For any $s \in \tilde{\mathcal H}_\bullet$ we have that $c + s \in C_b(\X^\mu \times Y^\nu)$.
		We may apply \cite[Theorem 2.4.1]{AdHe99} and the classical optimal transport duality in order to get
		\begin{align*}
		\inf_{\pi \in \Pi_\bullet(\mu,\nu)}
		\int c \,d\pi
		&=
		\inf_{\pi \in \Pi(\mu,\nu)}
		\sup_{\tilde s \in \tilde{\mathcal H}_\bullet}
		v(\pi,\tilde s)&&=
		\sup_{\tilde s \in \tilde{\mathcal H}_\bullet}
		\inf_{\pi \in \Pi(\mu,\nu)}
		v(\pi,\tilde s)
		\\
		&=
		\sup_{\substack{\tilde s \in \tilde{\mathcal H}_\bullet, s \in \mathcal H, \\ s \le c + \tilde s - \int \tilde s \, dP}}
		\int s \, dP&&=
		\sup_{s \in \mathcal H_\bullet:\,s \leq c} \int s \, dP,
		\end{align*}
		where the first equality follows directly from the characterization of $\Pi_\bullet(\mu,\nu)$ given in Lemma \ref{lem:causality_char} and we used for the final equality that $\mathcal H_\bullet$ is a linear space containing $\mathcal H$ and $\tilde{\mathcal H}_\bullet$.
		
		To show \eqref{eq:entropic transport duality}, we only treat the case $\varepsilon = 1$ for notational convenience, while the general case follows by considering the transformed cost function $\frac{c}{\varepsilon}$.
		Note that
		\[
		D_{\textrm{KL}}(\pi | P) = \sup_{f \in C_b(\X \times \Y)} \int f \, d\pi - \int \exp(f) \, dP + 1,
		\]
		see, e.g., \cite[Proposition 2.12]{Massart.07} (even though arbitrary measurable functions with $\int e^f \, dP = 1$ are used in \cite{Massart.07}, the version with $C_b$ follows by standard approximation arguments  and the observation that the left-hand side dominates the right-hand side as a consequence of the elementary inequality $x y - e^y + 1 \le x \log x$ for all $x > 0$ and $y \in \mathbb R$).
		Moreover, the term
		\begin{align*}
		v_{ent}(\pi,f) 
		:=&
		\int c + f \, d\pi - \int \exp(f) \, dP + 1
		\\
		=& \int g \, d\pi - \int \exp(g - c) \, dP + 1,
		\end{align*}
		(where we substituted $g := f - c$ in the second line)
		is, for fixed $f \in C_b(\X \times \Y)$, convex and lower semicontinuous in the variable $\pi \in \mathcal P(\X^\mu \otimes \Y^\nu)$ and,
		for fixed $\pi \in \mathcal P(\X^\mu \otimes Y^\nu)$, concave in the variable $f \in C_b(\X \times \Y)$.
		Thus by again applying \cite[Theorem 2.4.1]{AdHe99}, we find
		\begin{align*}
		E^\varepsilon_\bullet(\mu,\nu) 
		&=
		\inf_{\pi \in \Pi_\bullet(\mu,\nu)}
		\sup_{f \in C_b(\X \times \Y)}
		v_{ent}(\pi,f)
		\\
		&=
		\sup_{g \in C_b(\X \times \Y)}
		\inf_{\pi \in \Pi_\bullet(\mu,\nu)}
		\int g \, d\pi - \int \exp(g- c) \, dP + 1
		\\
		&=
		\sup_{g\in C_b(\X \times \Y)}
		\sup_{\substack{s \in \mathcal H_\bullet}{s \le g}}
		\int s - \exp(g - c) \, dP + 1
		\\
		&=
		\sup_{s \in \mathcal H_\bullet}
		\int s - \exp(s - c) \, d P + 1. \qedhere
		\end{align*}

	\end{proof}
\end{proposition}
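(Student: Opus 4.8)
The plan is to combine three ingredients: a linear-functional characterization of the constraint set $\Pi_\bullet(\mu,\nu)$, a minimax theorem applied on a suitably compactified coupling space, and the classical optimal transport duality together with the Donsker--Varadhan variational formula for relative entropy. The first thing I would record is weak duality. By Lemma \ref{lem:causality_char}, a probability $\pi$ with $\X$-marginal $\mu$ is causal iff $\int f_t\,d\pi = 0$ for every $f_t \in \tilde{\mathcal A}_{\X,\mu,t}$ and $t\in\{2,\dots,N\}$; adding the symmetric statement for anticausality and the classical marginal test functions in $\tilde{\mathcal S}_\X,\tilde{\mathcal S}_\Y$ yields that $\pi \in \Pi_\bullet(\mu,\nu)$ iff $\int s\,d\pi = \int s\,dP$ for all $s\in\tilde{\mathcal H}_\bullet$, equivalently (since $\mathcal H_\bullet$ is the linear span of $\mathcal H$ and $\tilde{\mathcal H}_\bullet$) for all $s\in\mathcal H_\bullet$. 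Consequently, for $\pi\in\Pi_\bullet(\mu,\nu)$ and $s\in\mathcal H_\bullet$ with $s\le c$ one has $\int s\,dP=\int s\,d\pi\le\int c\,d\pi$, giving ``$\ge$'' in \eqref{eq:transport duality}; and for general $s\in\mathcal H_\bullet$ the Fenchel--Young inequality $\varepsilon D_{\rm KL}(\pi,P)\ge\int(s-c)\,d\pi-\varepsilon\int\exp(\tfrac{s-c}{\varepsilon})\,dP+\varepsilon$ together with $\int s\,d\pi=\int s\,dP$ gives ``$\ge$'' in \eqref{eq:entropic transport duality}. A useful side remark is that $\Pi_\bullet(\mu,\nu)$ is then the intersection of $\Pi(\mu,\nu)$ with affine hyperplanes, hence convex.

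For strong duality of the transport identity, the decisive step is to pass to the refined topologies $\tau_{\X,\mu},\tau_{\Y,\nu}$ of Lemma \ref{lem:refined topology}, which make the kernels $K_{\mu,t},K_{\nu,t}$ continuous; then every $s\in\tilde{\mathcal H}_\bullet$ lies in $C_b(\X^\mu\times\Y^\nu)$, and so does $c+s$. As $\X^\mu,\Y^\nu$ carry the same Borel sets as $\X,\Y$, the measures $\mu,\nu$ remain Borel and $\Pi(\mu,\nu)$ is weakly compact in $\mathcal P(\X^\mu\times\Y^\nu)$, while $v(\pi,\tilde s):=\int(c+\tilde s)\,d\pi-\int\tilde s\,dP$ is affine in each variable and lower semicontinuous in $\pi$. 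Applying the minimax theorem \cite[Theorem 2.4.1]{AdHe99} (and using that $\sup_{\tilde s\in\tilde{\mathcal H}_\bullet}v(\pi,\tilde s)$ equals $\int c\,d\pi$ on $\Pi_\bullet(\mu,\nu)$ and $+\infty$ elsewhere) turns the left-hand side of \eqref{eq:transport duality} into $\sup_{\tilde s}\inf_{\pi\in\Pi(\mu,\nu)}v(\pi,\tilde s)$; for fixed $\tilde s$ the classical optimal transport duality for the cost $c+\tilde s$ rewrites the inner infimum as $\sup\{\int s\,dP:s\in\mathcal H,\ s\le c+\tilde s-\int\tilde s\,dP\}$, and taking the supremum over $\tilde s$ and using linearity of $\mathcal H_\bullet\supseteq\mathcal H\cup\tilde{\mathcal H}_\bullet$ collapses everything to $\sup\{\int s\,dP:s\in\mathcal H_\bullet,\ s\le c\}$.

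For the entropic identity, take $\varepsilon=1$ (replace $c$ by $c/\varepsilon$ otherwise). I would invoke the $C_b$-form of the Donsker--Varadhan formula $D_{\rm KL}(\pi|P)=\sup_{f\in C_b}\big(\int f\,d\pi-\int e^f\,dP+1\big)$, which follows from \cite[Proposition 2.12]{Massart.07} by approximation, so that $E_\bullet(\mu,\nu,c)=\inf_{\pi\in\Pi_\bullet(\mu,\nu)}\sup_{g\in C_b}\big(\int g\,d\pi-\int e^{g-c}\,dP+1\big)$ after substituting $g=f+c$. The integrand is affine and l.s.c.\ in $\pi$ and concave in $g$, and $\Pi_\bullet(\mu,\nu)$ is convex and weakly compact, so \cite[Theorem 2.4.1]{AdHe99} again exchanges $\inf$ and $\sup$; the resulting inner infimum $\inf_{\pi\in\Pi_\bullet(\mu,\nu)}\int g\,d\pi$ is precisely \eqref{eq:transport duality} applied to the continuous cost $g$, hence equals $\sup\{\int s\,dP:s\in\mathcal H_\bullet,\ s\le g\}$. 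Substituting and observing that for fixed $s$ the choice $g=s$ is optimal (as $g\mapsto-\int e^{g-c}\,dP$ is decreasing) reduces the double supremum to $\sup_{s\in\mathcal H_\bullet}\big(\int s\,dP-\int e^{s-c}\,dP+1\big)$, which is \eqref{eq:entropic transport duality}.

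I expect the main obstacle to be the topology-refinement step: the functions encoding causality are a priori only measurable, so without replacing $\X,\Y$ by $\X^\mu,\Y^\nu$ lower semicontinuity of $v$ (and of $v_{ent}$) on the compact coupling set is unavailable and the minimax theorem cannot be applied. Producing, via Lemma \ref{lem:refined topology}, a Polish topology with the same Borel $\sigma$-algebra on which all $N-1$ disintegration kernels are simultaneously continuous is the technical heart; the rest is a fairly standard minimax-plus-OT-duality argument, with the only other mild care being the passage from the measurable to the $C_b$ version of the entropy variational formula.
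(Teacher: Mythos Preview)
Your proposal is correct and follows essentially the same route as the paper: weak duality via the linear characterization of $\Pi_\bullet(\mu,\nu)$ from Lemma~\ref{lem:causality_char}, strong duality for \eqref{eq:transport duality} by passing to the refined topologies $\X^\mu,\Y^\nu$ of Lemma~\ref{lem:refined topology} and applying the minimax theorem \cite[Theorem 2.4.1]{AdHe99} on the compact set $\Pi(\mu,\nu)$ together with classical OT duality, and then \eqref{eq:entropic transport duality} by the Donsker--Varadhan formula, a second application of minimax on the compact convex set $\Pi_\bullet(\mu,\nu)$, and the already-established \eqref{eq:transport duality} for the inner infimum. The only cosmetic difference is that you make the convexity of $\Pi_\bullet(\mu,\nu)$ and the monotonicity argument for the final collapse $g=s$ slightly more explicit than the paper does.
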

Note that the integral $\int s \,dP$ simplifies in all cases, for instance if $s \in \mathcal{H}_{bc}$, then $\int s \,dP = \int f_1 \,d\mu_1 + \int g_1 \,d\nu_1$, where $s = s_{\X} + s_\Y$ and the functions $f_1$ and $g_1$ correspond to $s_\X$ and $s_\Y$, respectively as in the definition of $\mathcal{S}_{\X, \mu}$ and $\mathcal{S}_{\Y, \nu}$.

\section{Sinkhorn's algorithm}
\label{sec:sinkhorn}
Within this section, we continue working with $\varepsilon = 1$, while the general case may be recovered by scaling the cost function $c$.
We start the section by defining Sinkhorn's algorithm and showing how the respective iterations can be computed both in the primal and the dual formulation. Observe that
\[
\argmin_{\pi \in \Pi_{\bullet}(\mu, \nu)} \int c\,d\pi + D_{\rm KL}(\pi, P) = \argmin_{\pi \in \Pi_{\bullet}(\mu, \nu)} D_{\rm KL}(\pi, P_c),
\]
where $P_c \in \mathcal{P}(\X \times \Y)$ is defined via 
\begin{equation}
\label{eq:defpc}
\frac{d P_c}{dP}(x, y) := \frac{\exp(-c(x, y))}{\int \exp(-c) \,dP},
\end{equation}
since the value of $D_{\rm KL}(\pi,P_c)$ differs from $\int c \, d\pi + D_{\rm KL}(\pi,P)$ only by the constant $\log(\int \exp(-c) \,dP)$.
Recall that the unique optimizer exists by Remark \ref{rem:existence_optimizers}.
In other words, computing the optimizer of the problem $E_\bullet(\mu, \nu, c)$ is the same as computing the projection of $P_c$ onto the set $\Pi_{\bullet}(\mu, \nu)$ with respect to relative entropy.

The basic premise of Sinkhorn's algorithm is to regard the set of measures we optimize over, $\Pi_{\bullet}(\mu, \nu)$, as an intersection of simpler sets. Instead of computing the projection of $P_c$ onto $\Pi_{\bullet}(\mu, \nu)$ directly, the algorithm consists of iteratively projecting onto the simpler sets.
For $\mu \in \mathcal{P}(\X), \nu \in \mathcal{P}(\Y)$, define
\begin{align*}
\Pi_{c}(\mu, *) &= \cup_{\tilde{\nu} \in \mathcal{P}(\Y)} \Pi_c(\mu, \tilde{\nu}),\\ 
\Pi(*, \nu) &= \cup_{\tilde{\mu} \in \mathcal{P}(\X)} \Pi(\tilde{\mu}, \nu),\\
\Pi_{ac}(*, \nu) &= \cup_{\tilde{\mu} \in \mathcal{P}(\X)} \Pi_{ac}(\tilde{\mu}, \nu).
\end{align*}
These sets will serve as the simple building blocks of the sets we project onto.
We focus on the causal and bicausal problem in the following. We are thus interested in computing
\begin{align*}
\argmin_{\pi \in \Pi_{c}(\mu, \nu)} D_{\rm KL}(\pi, P_c) ~ \text{ and } ~ \argmin_{\pi \in \Pi_{bc}(\mu, \nu)} D_{\rm KL}(\pi, P_c).
\end{align*}

\begin{definition}[Sinkhorn steps]
	\label{def:sinkhornsteps}
	Let $\mathcal{Q}_1 = \Pi_c(\mu, *)$ and $\mathcal{Q}_2 \in \{\Pi(*, \nu), \Pi_{ac}(*, \nu)\}$.
	Let $\pi^{(0)} := P_c$ be as defined in \eqref{eq:defpc} and for $k \geq 0$, 
	\begin{align*}
	\pi^{(k+\frac{1}{2})} &:= \argmin_{\pi \in \mathcal{Q}_1}D_{\rm KL}(\pi, \pi^{(k)}), \\
	\pi^{(k+1)} &:= \argmin_{\pi \in \mathcal{Q}_2} D_{\rm KL}(\pi, \pi^{(k+\frac{1}{2})}).
	\end{align*}
	We refer to a step in the algorithm as \emph{causal step} if we project on $\mathcal Q_1$, \emph{normal step} if we project on $\mathcal Q_2 = \Pi(\ast,\nu)$ and \emph{anticausal step} if we project on $\mathcal Q_2 = \Pi_{ac}(\ast,\nu)$.
\end{definition}
The choice of $\mathcal{Q}_2$ determines which problem, either causal or bicausal OT, is solved. If $\mathcal{Q}_2 = \Pi(*, \nu)$, then the causal problem is solved, and if $\mathcal{Q}_2 = \Pi_{ac}(*, \nu)$, then the bicausal problem is solved. This choice remains fixed throughout the whole algorithm. We note that the existence of the minimizers is shown in Lemma \ref{lem:proj_primal} for causal and anticausal steps, respectively well known and tractable (see, e.g., \cite{nutz2021introduction}) for the normal step.
While tractability of projecting onto the set $\Pi(*, \nu)$ is well known, the following results establish the analogue for $\Pi_{c}(\mu, *)$.
The symmetric result for $\Pi_{ac}(*, \nu)$ can directly be inferred, but is not stated explicitly.

\begin{lemma}[Causal projection: primal version]
	\label{lem:proj_primal}
	Let $\gamma \in \mathcal{P}(\X \times \Y)$ be such that \[\inf_{\tilde\pi \in \Pi_c(\mu, *)} D_{\rm KL}(\tilde\pi, \gamma) < \infty.\]
	Recursively, we define functions $f_t : \X_{1:t} \times \Y_{1:t} \rightarrow \mathbb{R}$ and a measure $\pi \in \mathcal{P}(\X \times \Y)$ through its disintegrations:
	Set $f_N \equiv 0$ and, for $t=N-1, \dots, 1$,
	\begin{align*}
	\pi_{t+1, 0}^{X_{1:t}, Y_{1:t}} &:= \mu_{t+1}^{X_{1:t}}, \quad
	\frac{d\pi_{0, t+1}^{X_{1:t+1}, Y_{1:t}}}{d\gamma_{0, t+1}^{X_{1:t+1}, Y_{1:t}}} := \frac{\exp(-f_{t+1})}{\int \exp(-f_{t+1}) \,d\gamma_{0, t+1}^{X_{1:t+1}, Y_{1:t}}}, \\
	f_{t} &:= D_{\rm KL}(\pi_{t+1, t+1}^{X_{1:t}, Y_{1:t}}, \gamma_{t+1, t+1}^{X_{1:t}, Y_{1:t}}) + \int f_{t+1} \,d\pi_{t+1, t+1}^{X_{1:t}, Y_{1:t}},
	\end{align*}
	and 
	\[
	\pi_{1, 0} 
	:= 
	\mu_{1},
	\quad
	\frac{d\pi_{0, 1}^{X_1, Y_0}}{d\gamma_{0, 1}^{X_1, Y_0}} := \frac{\exp(-f_{1})}{\int \exp(-f_{1}) \,d\gamma_{0, 1}^{X_{1}, Y_{0}}}.
	\]
	Then, we have
	\[
	\pi = \argmin_{\tilde\pi \in \Pi_c(\mu, \ast)} D_{\rm KL}(\tilde{\pi}, \gamma).
	\]
	
	\begin{proof}
		First note that by plugging in the recursion for $f_t$, we can rewrite for $t < N$
		\begin{equation}
		\label{eq:proj_primal_ft}
		f_t(x_{1:t}, y_{1:t}) 
		= D_{\rm KL}\big(\pi_{t+1:T, t+1:T}^{x_{1:t}, y_{1:t}}, \gamma_{t+1:T, t+1:T}^{x_{1:t}, y_{1:t}}\big).
		\end{equation}
		Let $\tilde \pi \in \Pi_c(\mu,\ast)$ and we show that $D_{KL}(\tilde \pi , \gamma) \ge D_{KL}(\pi , \gamma)$.
		Using the chain rule of the relative entropy, see \cite[Theorem C.3.1]{dupuis2011weak}, we find 
		\begin{align}
		\label{eq:proj_primal_chain_rule1}
		D_{\rm KL}(\tilde \pi, \gamma) 
		= 
		D_{\rm KL}(\tilde \pi_{1, 1}, \gamma_{1, 1}) 
		+ \sum_{t=1}^{N-1} 
		\int D_{\rm KL}(\tilde \pi_{t+1, t+1}^{x_{1:t}, y_{1:t}}, \gamma_{t+1, t+1}^{x_{1:t}, y_{1:t}})
		\,\tilde \pi_{1:t, 1:t}(dx_{1:t}, dy_{1:t}),
		\end{align}
		where, again by the chain rule and as $\tilde \pi$ is causal,
		\begin{multline}
		\label{eq:proj_primal_chain_rule2}
		D_{KL}(\tilde \pi^{x_{1:t}, y_{1:t}}_{t + 1, t + 1}, \gamma^{x_{1:t}, y_{1:t}}_{t + 1, t + 1})
		\\
		=
		D_{KL}(\mu^{x_{1:t}}_{t+1}, \gamma_{t+1,0}^{x_{1:t},y_{1:t}})
		+
		\int
		D_{KL}(\tilde \pi^{x_{1:t+1},y_{1:t}}_{0,t+1},\gamma_{0,t+1}^{x_{1:t+1},y_{1:t}})
		\,\mu^{x_{1:t}}_{t+1}(dx_{t+1}).
		\end{multline}
		Let $t = 0,\ldots, N-1$ and set $\pi^N := \tilde \pi$, $\pi^t := \tilde \pi_{1:t,1:t} \otimes \pi_{t+1:N, t+1:N}^{X_{1:t}, Y_{1:t}}$, and $\pi^0 := \pi$.
		We show
		\begin{equation}
		\label{eq:proj_primal_decreasing_entropy}
		D_{KL}(\pi^{t + 1}, \gamma) - D_{KL}(\pi^t, \gamma) \ge 0.
		\end{equation} 
		Since $\pi^N = \tilde\pi$ and $\pi^0 = \pi$, this will yield the claim.
		Combining \eqref{eq:proj_primal_ft} and \eqref{eq:proj_primal_chain_rule1} imply
		\begin{multline}
		\label{eq:calcpit}
		D_{\rm KL}(\pi^t , \gamma) =
		D_{\rm KL}(\tilde \pi_{1,1}, \gamma_{1,1})
		+
		\sum_{s = 1}^{t-1} 
		\int D_{\rm KL}(\tilde \pi_{s+1, s+1}^{x_{1:s}, y_{1:s}}, \gamma_{s+1, s+1}^{x_{1:s}, y_{1:s}})
		\,\tilde \pi_{1:s, 1:s}(dx_{1:s}, dy_{1:s})
		\\
		+
		\int f_t(x_{1:t}, y_{1:t}) \, \tilde \pi_{1:t,1:t}(dx_{1:t}, dy_{1:t}).
		\end{multline}
		To shorten notation, we define the kernels
		\[
		K:= \pi^{X_{1:t-1}, Y_{1:t-1}}_{t,t}, \quad
		\tilde K := \tilde \pi^{X_{1:t-1}, Y_{1:t-1}}_{t,t}, \quad
		R := \gamma^{X_{1:t-1}, Y_{1:t-1}}_{t,t}.
		\]
		Note that the definition of $K = \pi^{X_{1:t-1}, Y_{1:t-1}}_{t,t}$ in the statement of the present lemma implies
		\[
		K
		=
		\argmin \left \{ D_{\rm KL}(\eta, R) + \int f_t \, d\eta \colon \eta \in \Pi_c(\mu_t^{X_{1:t-1}}, *) \right\},
		\]
		which is understood point-wise (for each $X_{1:t-1} = x_{1:t-1}, Y_{1:t-1} = y_{1:t-1}$), and
		can easily deduced by changing the reference measure and using the chain rule as in \eqref{eq:proj_primal_chain_rule2}.
		Hence, using \eqref{eq:calcpit} and the definition of $f_t$, we get
		\begin{align*}
		D_{\rm KL}(\pi^t, \gamma) - &D_{\rm KL}(\pi^{t-1}, \gamma)
		\\
		&=
		\int
		\Big(
		D_{\rm KL}(\tilde K, R) - D_{\rm KL}(K, R) + \int f_t \, d(\tilde K - K) 
		\Big)
		\, d\tilde \pi_{1:t-1,1:t-1}
		\\
		&\ge 0,
		\end{align*}
		where the inequality uses that $\tilde K \in \Pi(\mu_t^{X_{1:t-1}},\ast)$ and optimality of $K$. This shows \eqref{eq:proj_primal_decreasing_entropy} and thus the claim. \qedhere

	\end{proof}
\end{lemma}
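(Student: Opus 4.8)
The plan is to prove optimality by backward induction along the temporal filtration, using the chain rule for relative entropy to reduce the global minimization to a sequence of one‑step subproblems, each solved by the Gibbs variational principle (Donsker--Varadhan). The function $f_t$ should be read as the optimal ``cost‑to‑go'' from time $t$ onward, and the kernels $\pi_{0,t+1}^{X_{1:t+1},Y_{1:t}}$, whose density against $\gamma_{0,t+1}^{X_{1:t+1},Y_{1:t}}$ is proportional to $\exp(-f_{t+1})$, are precisely the corresponding Gibbs‑optimal $Y$‑transitions; the $X$‑transitions $\pi_{t+1,0}^{X_{1:t},Y_{1:t}}=\mu_{t+1}^{X_{1:t}}$ are \emph{forced}, because causality of any competitor pins the $X$‑marginal of each step to $\mu$.

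First I would unroll the recursion to obtain the closed form
\[
f_t(x_{1:t},y_{1:t}) = D_{\rm KL}\big(\pi_{t+1:N,\,t+1:N}^{x_{1:t},y_{1:t}},\,\gamma_{t+1:N,\,t+1:N}^{x_{1:t},y_{1:t}}\big),
\]
i.e.\ $f_t$ is the conditional relative entropy of the tail of $\pi$ given $(X_{1:t},Y_{1:t})$; this follows by downward induction on $t$, substituting $\pi_{t+1,t+1}^{X_{1:t},Y_{1:t}}=\pi_{t+1,0}^{X_{1:t},Y_{1:t}}\otimes\pi_{0,t+1}^{X_{1:t+1},Y_{1:t}}$ into the chain rule and feeding in the inductive formula for $f_{t+1}$. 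Along the way one records that the constructed $\pi$ lies in $\Pi_c(\mu,\ast)$: its $X$‑kernels $\mu_{t+1}^{X_{1:t}}$ carry no dependence on $Y$, so causality holds, and they concatenate to give $X$‑marginal $\mu$.

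Next, for an arbitrary competitor $\tilde\pi\in\Pi_c(\mu,\ast)$ with $D_{\rm KL}(\tilde\pi,\gamma)<\infty$, I would interpolate by setting $\pi^t:=\tilde\pi_{1:t,1:t}\otimes\pi_{t+1:N,\,t+1:N}^{X_{1:t},Y_{1:t}}$, so that $\pi^N=\tilde\pi$ and $\pi^0=\pi$, and then show the monotonicity $D_{\rm KL}(\pi^t,\gamma)\ge D_{\rm KL}(\pi^{t-1},\gamma)$; telescoping then yields $D_{\rm KL}(\tilde\pi,\gamma)\ge D_{\rm KL}(\pi,\gamma)$. Applying the chain rule and the closed form for $f_t$, the difference $D_{\rm KL}(\pi^t,\gamma)-D_{\rm KL}(\pi^{t-1},\gamma)$ collapses to $\int\big(D_{\rm KL}(\tilde K,R)-D_{\rm KL}(K,R)+\int f_t\,d(\tilde K-K)\big)\,d\tilde\pi_{1:t-1,1:t-1}$ with $K:=\pi_{t,t}^{X_{1:t-1},Y_{1:t-1}}$, $\tilde K:=\tilde\pi_{t,t}^{X_{1:t-1},Y_{1:t-1}}$, $R:=\gamma_{t,t}^{X_{1:t-1},Y_{1:t-1}}$. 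Causality of $\tilde\pi$ forces the $X$‑marginal of $\tilde K$ to be $\mu_t^{X_{1:t-1}}$, so $\tilde K$ is admissible for the pointwise problem $\min\{D_{\rm KL}(\eta,R)+\int f_t\,d\eta:\eta\in\Pi(\mu_t^{X_{1:t-1}},\ast)\}$; a short computation (disintegrate $\eta$ and $R$ in $X_t$, note the $X$‑term is constant, then apply Donsker--Varadhan pointwise in $x_t$) identifies $K$, whose $Y$‑part is the $\gamma$‑Gibbs kernel proportional to $\exp(-f_t)$, as the minimizer. Hence the integrand is nonnegative, and uniqueness follows from strict convexity of $D_{\rm KL}(\cdot,\gamma)$.

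Finally I would verify well‑definedness of the construction — positivity and finiteness of the normalizing constants $\int\exp(-f_{t+1})\,d\gamma_{0,t+1}^{X_{1:t+1},Y_{1:t}}$ and $\gamma$‑a.s.\ finiteness of the $f_t$ — which is exactly where the standing hypothesis $\inf_{\tilde\pi\in\Pi_c(\mu,\ast)}D_{\rm KL}(\tilde\pi,\gamma)<\infty$ enters: it provides a causal coupling with $X$‑marginal $\mu$ that is absolutely continuous w.r.t.\ $\gamma$, and this domination propagates down the recursion. I expect the main obstacle to be not any single analytic estimate — the Gibbs variational principle is routine — but the bookkeeping of nested disintegrations and the repeated, correctly ordered use of the chain rule, ensuring that causality of the competitor is invoked at precisely the step where it pins the first marginal, so that each per‑step subproblem is an \emph{unconstrained} Gibbs minimization in the $Y$‑direction over a \emph{fixed} first marginal.
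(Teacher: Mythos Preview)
Your proposal is correct and follows essentially the same route as the paper: the closed form $f_t=D_{\rm KL}(\pi_{t+1:N,t+1:N}^{x_{1:t},y_{1:t}},\gamma_{t+1:N,t+1:N}^{x_{1:t},y_{1:t}})$, the interpolation $\pi^t=\tilde\pi_{1:t,1:t}\otimes\pi_{t+1:N,t+1:N}^{X_{1:t},Y_{1:t}}$, and the monotonicity argument via the pointwise one-step minimization are exactly what the paper does, with the same kernels $K,\tilde K,R$. Your additional remarks on uniqueness (via strict convexity) and on well-definedness of the normalizing constants under the finiteness hypothesis go slightly beyond what the paper spells out.
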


\begin{corollary}
	\label{cor:invarianceopti}
	Let $\gamma, \hat \gamma \in \mathcal P(\mathcal X \times \mathcal Y)$ be equivalent, i.e.\ $\gamma \ll \hat \gamma$ and $\hat \gamma \ll \gamma$, such that
	\begin{align*}
	\inf_{\tilde \pi \in \Pi_c(\mu,\ast)} D_{\textrm{KL}}(\tilde \pi, \gamma) < \infty, ~~~ 
	\inf_{\tilde \pi \in \Pi_c(\mu,\ast)} D_{\textrm{KL}}(\tilde \pi, \hat \gamma) < \infty,
	\end{align*}
	and for $t = 1,\ldots,N-1$, assume $\gamma$-a.s.\
	\begin{gather}
	\label{eq:causal_dens1}
	\frac{d\hat \gamma^{X_{1:t},Y_{1:t}}_{t+1,t+1}}{d\gamma^{X_{1:t},Y_{1:t}}_{t+1,t+1}}
	=
	\exp( h_{t+1}(X_{1:t+1},Y_{1:t}) ),
	\\
	\label{eq:causal_dens2}
	\int h_{t+1}(X_{1:t},\tilde x_{t + 1}, Y_{1:t}) \, \mu_{t+1}^{X_{1:t}}(d\tilde x_{t+1})  = 0,
	\end{gather}
	for measurable functions $h_{t+1} \colon \mathcal X_{1:t+1} \times \mathcal Y_{1:t} \to \mathbb R$
	and 
	\[
	\frac{d \hat \gamma_{1,1}}{ d \gamma_{1,1}} = \exp(h_1(X_1)),
	\]
	for $h_1 \colon \mathcal X_1 \to \mathbb R$ measurable.
	Then, $\pi$ given as in Lemma \ref{lem:proj_primal} satisfies
	\[
	\pi = \argmin_{\tilde \pi \in \Pi_c(\mu,\ast)} D_{\textrm{KL}} (\tilde \pi, \gamma) 
	= \argmin_{\tilde \pi \in \Pi_c(\mu,\ast)} D_{\textrm{KL}} (\tilde \pi, \hat \gamma).
	\]
\end{corollary}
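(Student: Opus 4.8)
The plan is to \emph{run the recursion of Lemma~\ref{lem:proj_primal} with $\hat\gamma$ in place of $\gamma$} and to check that it returns the very same object. Since $\inf_{\tilde\pi\in\Pi_c(\mu,\ast)}D_{\rm KL}(\tilde\pi,\hat\gamma)<\infty$, Lemma~\ref{lem:proj_primal} applies to $\hat\gamma$; denote the functions and measure it produces by $\hat f_t,\hat\pi$, so that $\hat\pi=\argmin_{\tilde\pi\in\Pi_c(\mu,\ast)}D_{\rm KL}(\tilde\pi,\hat\gamma)$. Since Lemma~\ref{lem:proj_primal} applied to $\gamma$ already gives $\pi=\argmin_{\tilde\pi\in\Pi_c(\mu,\ast)}D_{\rm KL}(\tilde\pi,\gamma)$, everything reduces to showing $\hat f_t=f_t$ a.s.\ for all $t$, whence $\hat\pi=\pi$.

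First I would record that the density relation \eqref{eq:causal_dens1} perturbs only the $X$-kernels of $\gamma$ and not its $Y$-kernels. Fixing $t$ and disintegrating $\gamma_{t+1,t+1}^{X_{1:t},Y_{1:t}}$ and $\hat\gamma_{t+1,t+1}^{X_{1:t},Y_{1:t}}$ over the coordinate $X_{t+1}$: the density $\exp(h_{t+1}(X_{1:t+1},Y_{1:t}))$ in \eqref{eq:causal_dens1} depends, given $(X_{1:t},Y_{1:t})$, only on $X_{t+1}$, so it reweights the law of $X_{t+1}$ but leaves the conditional law of $Y_{t+1}$ given $(X_{1:t+1},Y_{1:t})$ unchanged; that is, $\hat\gamma_{0,t+1}^{X_{1:t+1},Y_{1:t}}=\gamma_{0,t+1}^{X_{1:t+1},Y_{1:t}}$ a.s., and likewise $\hat\gamma_{0,1}^{X_1,Y_0}=\gamma_{0,1}^{X_1,Y_0}$ a.s.\ from $\tfrac{d\hat\gamma_{1,1}}{d\gamma_{1,1}}=\exp(h_1(X_1))$.

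Next I would prove $\hat f_t=f_t$ by downward induction on $t$; this is the step where \eqref{eq:causal_dens2} is used. For $t=N$ both vanish. Assuming $\hat f_{t+1}=f_{t+1}$: since $\hat\pi_{t+1,0}^{X_{1:t},Y_{1:t}}=\mu_{t+1}^{X_{1:t}}=\pi_{t+1,0}^{X_{1:t},Y_{1:t}}$ and $\hat\gamma_{0,t+1}^{X_{1:t+1},Y_{1:t}}=\gamma_{0,t+1}^{X_{1:t+1},Y_{1:t}}$ (previous paragraph) while the $Y$-kernel of $\hat\pi$ has density $\propto\exp(-\hat f_{t+1})=\exp(-f_{t+1})$, the recursion yields $\hat\pi_{t+1,t+1}^{X_{1:t},Y_{1:t}}=\pi_{t+1,t+1}^{X_{1:t},Y_{1:t}}$. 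Then, writing out the definition of $\hat f_t$ and applying the change-of-reference-measure identity $D_{\rm KL}(\rho,\hat\gamma_{t+1,t+1}^{X_{1:t},Y_{1:t}})=D_{\rm KL}(\rho,\gamma_{t+1,t+1}^{X_{1:t},Y_{1:t}})-\int h_{t+1}\,d\rho$ with $\rho=\pi_{t+1,t+1}^{X_{1:t},Y_{1:t}}$, one gets $\hat f_t=f_t$ as soon as $\int h_{t+1}\,d\pi_{t+1,t+1}^{X_{1:t},Y_{1:t}}=0$. This last identity holds because $h_{t+1}$ does not depend on $Y_{t+1}$, the $X_{t+1}$-marginal of $\pi_{t+1,t+1}^{X_{1:t},Y_{1:t}}$ is $\mu_{t+1}^{X_{1:t}}$, and $\int h_{t+1}(X_{1:t},\tilde x_{t+1},Y_{1:t})\,\mu_{t+1}^{X_{1:t}}(d\tilde x_{t+1})=0$ by \eqref{eq:causal_dens2} (which also makes the correction term integrable). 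This closes the induction; together with $\hat\pi_{1,0}=\mu_1=\pi_{1,0}$ and $\hat\gamma_{0,1}^{X_1,Y_0}=\gamma_{0,1}^{X_1,Y_0}$, all disintegration kernels of $\hat\pi$ and $\pi$ agree, so $\hat\pi=\pi$, giving $\pi=\argmin_{\tilde\pi\in\Pi_c(\mu,\ast)}D_{\rm KL}(\tilde\pi,\gamma)=\hat\pi=\argmin_{\tilde\pi\in\Pi_c(\mu,\ast)}D_{\rm KL}(\tilde\pi,\hat\gamma)$.

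A cleaner but morally equivalent packaging would be to observe directly (by multiplying the kernel densities from the disintegration \eqref{eq:disint_temp}) that $\tfrac{d\hat\gamma}{d\gamma}(x,y)=\exp\big(h_1(x_1)+\sum_{t=1}^{N-1}h_{t+1}(x_{1:t+1},y_{1:t})\big)$ and that, by \eqref{eq:causal_dens2} and causality (which forces $\tilde\pi_{t+1,0}^{X_{1:t},Y_{1:t}}=\mu_{t+1}^{X_{1:t}}$), one has $\int h_{t+1}\,d\tilde\pi=0$ for every $\tilde\pi\in\Pi_c(\mu,\ast)$, so that $D_{\rm KL}(\tilde\pi,\hat\gamma)=D_{\rm KL}(\tilde\pi,\gamma)-\int h_1\,d\mu_1$ on $\Pi_c(\mu,\ast)$; the two functionals then differ by a constant and share the minimizer $\pi$. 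I expect the only real work to be the bookkeeping of which coordinates the various disintegration kernels, the $h_t$ and the $f_t$ depend on — there is no substantive obstacle beyond careful notation.
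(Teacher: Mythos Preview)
Your main argument is correct and essentially identical to the paper's: both run the recursion of Lemma~\ref{lem:proj_primal} with $\hat\gamma$ in place of $\gamma$ and verify it yields the same $\pi$, using that \eqref{eq:causal_dens1} leaves the $Y$-kernels $\gamma_{0,t+1}^{X_{1:t+1},Y_{1:t}}$ unchanged and that \eqref{eq:causal_dens2} together with $\pi_{t+1,0}^{X_{1:t},Y_{1:t}}=\mu_{t+1}^{X_{1:t}}$ makes the KL correction term vanish; you spell out the downward induction on $t$ explicitly, whereas the paper compresses it into a single sentence. Your alternative ``cleaner packaging'' (showing the two KL functionals differ by a constant on $\Pi_c(\mu,\ast)$) is also valid and is in fact the route the paper takes later in Lemma~\ref{lem:invarianceopti} for the bounded-density case.
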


\begin{proof}
	By \eqref{eq:causal_dens1} we find that
	\[
	\frac{d\hat\gamma^{X_{1:t+1}, Y_{1:t}}_{0,t+1}}{d\gamma^{X_{1:t+1},Y_{1:t}}_{0,t+1}} = 1,
	\]
	which yields $\gamma$-almost surely (and $\hat \gamma$-almost surely) for $t=0, \dots, T-1$
	\begin{equation}
	\label{eq:cor1}
	\frac{d\pi^{X_{1:t+1},Y_{1:t}}_{0,t+1}}{d \hat \gamma^{X_{1:t+1},Y_{1:t}}_{0,t+1}}
	=
	\frac{d\pi^{X_{1:t+1},Y_{1:t}}_{0,t+1}}{d\gamma^{X_{1:t+1},Y_{1:t}}_{0,t+1}}.
	\end{equation}
	We have by \eqref{eq:causal_dens2} and as $\pi \in \Pi_c(\mu, \ast)$ that
	\begin{align}
	\nonumber
	D_{\textrm{KL}} (\pi^{X_{1:t},Y_{1:t}}_{t+1,t+1},\hat \gamma^{X_{1:t},Y_{1:t}}_{t+1,t+1}) 
	&= 
	D_{\textrm{KL}}(\pi^{X_{1:t},Y_{1:t}}_{t+1,t+1},\gamma^{X_{1:t},Y_{1:t}}_{t+1,t+1})
	 - 
	\int h_{t + 1}(X_{1:t}, \tilde x_{t+1}, Y_{1:t}) \, d\pi^{X_{1:t},Y_{1:t}}_{t+1,t+1}
	\\
	\nonumber
	&=
	D_{\textrm{KL}}(\pi^{X_{1:t},Y_{1:t}}_{t+1,t+1},\gamma^{X_{1:t},Y_{1:t}}_{t+1,t+1})
	-
	\int h_{t + 1}(X_{1:t}, \tilde x_{t+1}, Y_{1:t}) \, d\mu^{X_{1:t}}_{t+1}(\tilde x_{t+1})
	\\
	\label{eq:cor2}
	&=
	D_{\textrm{KL}}(\pi^{X_{1:t},Y_{1:t}}_{t+1,t+1},\gamma^{X_{1:t},Y_{1:t}}_{t+1,t+1}).
	\end{align}
	Comparing \eqref{eq:cor1} and \eqref{eq:cor2} with the recursion in Lemma \ref{lem:proj_primal} yields that $\pi$ also satisfies the same recursion when $\gamma$ is replaced by $\hat \gamma$, which shows the claim.
\end{proof}

Recall that $\mathcal{S}_{\X, \mu}$ is defined as the dual space to the constraint $\Pi_{c}(\mu, *)$, cf.~Section \ref{subsec:duality}.
\begin{lemma}[Causal projection: dual version]
	\label{lem:proj_dual}
	Let $\mu \in \mathcal P(\X)$, $\nu \in \mathcal P(\Y)$, and
	$\gamma \in \mathcal{P}(\X \times \Y)$ have density $\frac{d\gamma}{dP} = \exp(g)$ for some measurable and bounded function $g$.
	Let $\pi \in \Pi_c(\mu, *)$ and $s_\X \in \mathcal{S}_{\X, \mu}$ be such that
	\begin{equation}
	\label{eq:optimal causal density}
	\frac{d\pi}{dP} = \exp(s_\X + g).
	\end{equation}
	Then, $\pi$ is the unique minimizer of $\min_{\tilde{\pi} \in \Pi_{c}(\mu, *)} D_{\rm KL}(\tilde{\pi}, \gamma)$. 
	
	Moreover, $s_\X$ can be decomposed as a sum, $s_\X = \sum_{t = 1}^N f_t$,
	where $f_t \in \mathcal A_{\X,\mu,t}$ are explicitly given by the following recursion: 
	Set $g_N = g$ and, for $t=N-1, \dots, 1$,
	\begin{align*}
	a_{t+1} &:= - \log \int \exp(g_{t+1}) \,d\nu_{t+1}^{Y_{1:t}} \\
	g_t &:= - \int a_{t+1} \,d\mu_{t+1}^{X_{1:t}} \\
	f_{t+1} &:= a_{t+1} + g_t
	\end{align*}
	and $f_1 = -\log \int \exp(g_1) \,d\nu_1$.
	\begin{proof}
		First we show optimality of $\pi$ when its density takes the form as in \eqref{eq:optimal causal density}. 
		Obviously, $\frac{d\pi}{d\gamma} = \exp(s_{\X})$.
		Thus, for $\tilde\pi \in \Pi_c(\mu, \ast)$ with $D_{\rm KL}(\tilde \pi, \gamma) < \infty$ we get
		\begin{align}
		\label{eq:optimal causal density proof1}
		\begin{split}
		D_{\rm KL}(\tilde\pi, \gamma) - D_{\rm KL}(\tilde\pi, \pi) 
		&= 
		D_{\rm KL}(\tilde\pi, \gamma) - \int \log \left(\frac{d\tilde\pi}{d\gamma} \right) \,d\tilde\pi 
		+ \int \log \left(\frac{d\pi}{d\gamma} \right) \,d\tilde\pi \\
		&= \int s_\X \,d\tilde\pi = \int s_\X \,d\pi = D_{\rm KL}(\pi, \gamma)
		\end{split}
		\end{align}
		which implies that $\pi = \argmin_{\tilde\pi \in \Pi_c(\mu, *)} D_{\rm KL}(\tilde\pi, \gamma)$.
		Further, if $\pi' \in \Pi_c(\mu,\ast)$ was another minimizer, then \eqref{eq:optimal causal density proof1} yields
		that $D_{\rm KL}(\pi', \pi) = 0$, thus, $\pi' = \pi$.
		
		
		Concerning the second assertion, let $\hat \pi \in \Pi_c(\mu,\ast)$ and rewrite the left-hand side in \eqref{eq:optimal causal density} as
		\begin{align}
		\label{eq:optimal causal density proof2}
		\frac{d\hat \pi}{dP} 
		&= 
		\Big( 
		\frac{d\hat \pi_{1, 0}}{d\mu_1} 
		\frac{d\hat \pi_{0, 1}^{X_1, Y_0}}{d\nu_1}
		\Big) 
		\prod_{t=1}^{N-1} 
		\Big( 
		\frac{d\hat \pi_{t+1, 0}^{X_{1:t}, Y_{1:t}}}{d\mu_{t+1}^{X_{1:t}}} 
		\frac{d\hat \pi_{0, t+1}^{X_{1:t+1}, Y_{1:t}}}{d\nu_{t+1}^{Y_{1:t}}} 
		\Big)
		\\
		\label{eq:optimal causal density proof3}
		&= 
		\Big(
		\frac{d\hat \pi_{0, 1}^{X_1, Y_0}}{d\nu_1}
		\Big) 
		\prod_{t=1}^{N-1} 
		\Big(
		\frac{d\hat \pi_{0, t+1}^{X_{1:t+1}, Y_{1:t}}}{d\nu_{t+1}^{Y_{1:t}}} 
		\Big),
		\end{align}
		where the second equality holds if and only if $\hat \pi \in \Pi_c(\mu,\ast)$, since then, for $t = 1,\ldots,N-1$,
		\[
		\frac{d\hat \pi_{1, 0}}{d\mu_1} = 1 = \frac{d\hat \pi_{t+1, 0}^{X_{1:t}, Y_{1:t}}}{d\mu_{t+1}^{X_{1:t}}}.
		\]
		Let $f_t$ and $g_t$ be given as in the statement of the lemma.
		Moreover, set $\hat s_\X := \sum_{t = 1}^N f_t$ which is by construction in $\mathcal S_{\X,\mu}$, then we have
		\begin{align}
		\label{eq:optimal causal density proof4}
		\exp(\hat s_\X + g) 
		= 
		\Big(
		\exp(g_1 + f_1)
		\Big) 
		\prod_{t=1}^{N-1} 
		\Big(
		\exp(g_{t+1} + a_{t+1})
		\Big).
		\end{align}
		Note that $\exp(g_1 + f_1) \in L^\infty(\X_1)$ is a density w.r.t.\ $\nu_1$, whereas for $t = 2,\ldots, N$, $\exp(g_{t} + a_{t}) \in L^\infty(\X_{1:t+1} \times \Y_{1:t})$ are densities w.r.t.\ $\nu_{t+1}^{Y_{1:t}}$.
		Thus, by defining $\hat \pi$ as in \eqref{eq:optimal causal density} with $\hat s_\X$,
		we obtain from comparing \eqref{eq:optimal causal density proof2}, \eqref{eq:optimal causal density proof3}, and \eqref{eq:optimal causal density proof4} that
		\[
		\frac{d\hat \pi_{0,1}^{X_1,Y_1}}{d\nu_1} = \exp(g_1 + f_1),
		\quad
		\frac{d\hat \pi_{0,t+1}^{X_{1:t+1},Y_{1:t}}}{d\nu_{t+1}^{Y_{1:t}}} = \exp(g_t + f_t),
		\]
		and $\hat \pi \in \Pi_c(\mu,\ast)$.
		Finally, by uniqueness that was shown in the first part, we derive that any $s_\X \in \mathcal S_{\X,\mu}$ satisfying \eqref{eq:optimal causal density} has to coincide with $\hat s_\X$, hence, $s_\X = \sum_{t = 1}^N f_t$.
	\end{proof}
\end{lemma}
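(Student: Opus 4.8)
The plan is to treat the two assertions separately. For the optimality-and-uniqueness statement I would use a Pythagorean identity for relative entropy. Since $\frac{d\gamma}{dP}=\exp(g)$ and $s_\X,g$ are bounded, \eqref{eq:optimal causal density} gives that $\pi$ and $\gamma$ are equivalent with $\frac{d\pi}{d\gamma}=\exp(s_\X)$, so for any $\tilde\pi\in\Pi_c(\mu,\ast)$ with $D_{\rm KL}(\tilde\pi,\gamma)<\infty$ the chain rule for densities yields $D_{\rm KL}(\tilde\pi,\gamma)=D_{\rm KL}(\tilde\pi,\pi)+\int s_\X\,d\tilde\pi$. The key observation is that $\int s_\X\,d\rho$ takes the \emph{same value} for all $\rho\in\Pi_c(\mu,\ast)$: writing $s_\X=f_1+\sum_{t=2}^N f_t$ with $f_t\in\mathcal A_{\X,\mu,t}$, causality of $\rho$ together with the identity $\rho^{X_{1:t-1},Y_{1:t-1}}_{t,0}=\mu^{X_{1:t-1}}_t$ forces $\int f_t\,d\rho=0$ for $t\ge 2$, while $\int f_1\,d\rho=\int f_1\,d\mu_1$; this is precisely the characterization of causality underlying $\mathcal S_{\X,\mu}$ (cf.\ Lemma \ref{lem:causality_char} and the proof of Proposition \ref{prop:duality}). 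Taking $\tilde\pi=\pi$ gives $\int s_\X\,d\pi=D_{\rm KL}(\pi,\gamma)$, hence $D_{\rm KL}(\tilde\pi,\gamma)=D_{\rm KL}(\pi,\gamma)+D_{\rm KL}(\tilde\pi,\pi)\ge D_{\rm KL}(\pi,\gamma)$, with equality if and only if $\tilde\pi=\pi$; this delivers both optimality and uniqueness.

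For the explicit recursion I would proceed constructively. First, the recursion is well posed and produces bounded functions because $g$ is bounded, so each $\int\exp(g_{t+1})\,d\nu^{Y_{1:t}}_{t+1}$ is bounded away from $0$ and $\infty$. Put $\hat s_\X:=\sum_{t=1}^N f_t$. Two checks are needed: (a) $f_t\in\mathcal A_{\X,\mu,t}$, which is immediate because $f_{t+1}=a_{t+1}+g_t$ with $g_t=-\int a_{t+1}\,d\mu^{X_{1:t}}_{t+1}$ is exactly ``$a_{t+1}$ minus its $\mu^{X_{1:t}}_{t+1}$-average'' and $f_1$ is a function of $x_1$ only; (b) a short telescoping rearrangement gives $\hat s_\X+g=(g_1+f_1)+\sum_{t=2}^N(g_t+a_t)$, so $\exp(\hat s_\X+g)=\exp(g_1+f_1)\prod_{t=1}^{N-1}\exp(g_{t+1}+a_{t+1})$, and by construction of $f_1$ and the $a_{t+1}$ each factor is a probability density with respect to $\nu_1$, respectively $\nu^{Y_{1:t}}_{t+1}$. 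I would then \emph{define} $\hat\pi$ via the interleaved disintegration in the coordinate order $X_1,Y_1,X_2,Y_2,\dots$, using the $X$-kernels $\mu_1,\mu^{X_1}_2,\dots,\mu^{X_{1:N-1}}_N$ and the $Y$-kernels $\exp(g_1+f_1)\cdot\nu_1$ and $\exp(g_{t+1}+a_{t+1})\cdot\nu^{Y_{1:t}}_{t+1}$. Since the $X$-kernel at step $t+1$ is $\mu^{X_{1:t}}_{t+1}$ and does not depend on $Y_{1:t}$, this $\hat\pi$ belongs to $\Pi_c(\mu,\ast)$; and reading off the Radon--Nikodym derivative from the analogous interleaved disintegration of $P=\mu\otimes\nu$ (legitimate because under $P$ the $Y$-coordinates are independent of the $X$-coordinates) gives $\frac{d\hat\pi}{dP}=\exp(\hat s_\X+g)$.

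Finally I would glue the two parts: by the first part applied with $\hat s_\X$ in place of $s_\X$, the coupling $\hat\pi$ is \emph{the} minimizer of $D_{\rm KL}(\cdot,\gamma)$ over $\Pi_c(\mu,\ast)$, and so is $\pi$ by hypothesis; hence $\hat\pi=\pi$, which forces $\exp(s_\X+g)=\exp(\hat s_\X+g)$ $P$-a.s., i.e.\ $s_\X=\sum_{t=1}^N f_t$. I expect the only genuine difficulty to be organizational rather than conceptual: keeping the variable dependencies $g_t(x_{1:t},y_{1:t})$ and $a_t(x_{1:t},y_{1:t-1})$ straight along the recursion, checking carefully that the interleaved disintegration really defines a causal coupling with $\X$-marginal $\mu$, and justifying the product formula for $\frac{d\hat\pi}{dP}$ via the chain rule for densities under successive disintegrations.
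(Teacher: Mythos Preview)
Your proposal is correct and follows essentially the same approach as the paper: the Pythagorean identity $D_{\rm KL}(\tilde\pi,\gamma)=D_{\rm KL}(\tilde\pi,\pi)+\int s_\X\,d\tilde\pi$ together with the constancy of $\int s_\X\,d\rho$ over $\Pi_c(\mu,\ast)$ for the first part, and for the second part the telescoping factorization of $\exp(\hat s_\X+g)$ into $\nu_{t+1}^{Y_{1:t}}$-densities, followed by the uniqueness argument to identify $s_\X$ with $\hat s_\X$. The only cosmetic difference is that you build $\hat\pi$ directly from its disintegration kernels (which makes causality immediate and then verifies the density), whereas the paper defines $\hat\pi$ by its density and then matches it against the general product formula for $\frac{d\hat\pi}{dP}$ under causality; the two constructions are equivalent.
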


\subsection{The causal case via stability}
In this section, we give a quick argument, following the lines of \cite[Section 3.4]{eckstein2021quantitative}, to prove the convergence of Sinkhorn's algorithm via the stability results obtained in Section \ref{subsec:stab}. 
This is based on the argument that iterations of the algorithm are each optimal - albeit for their own marginals instead of the true ones.
As it is, this line of argument only works in the causal case but not the bicausal one.
The reason is that for causal OT, the causality constraint is satisfied every second step during Sinkhorn's algorithm. For bicausal OT on the other hand, each iteration may fail to satisfy a bicausality constraint, and hence the respective stability results cannot be applied directly.
In the following proposition, notice that for a bounded metric (and thus bounded cost function), no assumptions on the marginals are required at all.
\begin{proposition}
	\label{prop:conv_causal_sink}
	Let $p \in [1, \infty)$ and assume $c$ is continuous and has growth of order at most $p$. Denote by $\pi^*$ the optimizer of $E_c(\mu, \nu, c)$ and let $\pi^{(k)}, \pi^{(k+\frac{1}{2})}$ be the causal iterations of Sinkhorn's algorithm as defined in Definition \ref{def:sinkhornsteps}. If $\nu$ has finite exponential moments of order $p$,\footnote{Which is defined as $\int \exp(\alpha d_\X(\hat{x}, x)^p \,\mu(dx) < \infty$ for some $\alpha > 0, \hat{x} \in \X$.} then $\pi^{(k+\frac{1}{2})}$ converges in $W_p$ to $\pi^*$ as $k\rightarrow \infty$, $k \in \mathbb{N}$. If further $\mu$ has finite exponential moments of order $p$ as well, then $\pi^{(k)}$ also converges in $W_p$ to $\pi^*$ as $k \rightarrow \infty$.
	\begin{proof}
		Write $\pi^{(l)}$ for $l \in \{0, \frac{1}{2}, 1, 1 + \frac{1}{2}, 2, 2 + \frac{1}{2}, \dots\}$.
		To ease notation, we write $\mu^{(l)} = \pi^{(l)}_{1:N, 0}$ and $\nu^{(l)} = \pi^{(l)}_{0, 1:N}$.
		By the Pythagorean theorem for the relative entropy, see \cite[Equation (3.1) and Corollary 3.1]{csiszar1975divergence}, we find for any $\pi \in \Pi_\bullet(\mu,\nu)$ which has finite entropy w.r.t.\ $P$ (and therefore also w.r.t.\ $P_c = \pi^{(0)}$) that
		\begin{equation}
		\label{eq:pythagorean_thm}
		D_{\textrm{KL}}(\pi, P_c) = D_{\textrm{KL}}(\pi, \pi^{(n)}) + \sum_{l\in\{\frac{1}{2}, 1, 1+\frac{1}{2}, \dots, n\}} D_{\textrm{KL}}( \pi^{(l)}, \pi^{(l - \frac{1}{2})} ),
		\end{equation}
		where $n \in \{ \frac12, 1, 1 + \frac12, \ldots \}$.
		A quick calculation (cf.~\cite[Proposition 2.1]{ruschendorf1995convergence}) reveals 
		\[D_{\rm KL}(\mu^{(l)}, \mu) \rightarrow 0~\text{ and } ~D_{\rm KL}(\nu^{(l)}, \nu) \rightarrow 0.\] By the tail assumptions on $\mu$ and $\nu$ and by \cite[Corollary 2.3]{bolley2005weighted}, this implies $W_p$ convergence of the marginals (and thus in particular convergence of the $p$-th moments). Further, by Pinsker's inequality and Lemma \ref{lem:AV_TV}, this convergence also holds in $AV$. Convergence in $AV$ (which directly implies $W_{p, bc}$ convergence for a bounded metric) together with convergence of the $p$-th moments implies convergence in $W_{p, bc}$, and thus
		\[W_{p, bc}(\mu^{(l)}, \mu) \rightarrow 0~\text{ and }~W_{p, bc}(\nu^{(l)}, \nu) \rightarrow 0.\]
		Notably, for $l = k + \frac{1}{2}$, this convergence holds without the tail assumption on $\mu$ (since for these iterations the first marginal  of $\pi^l$ equals $\mu$).
		
		We turn towards applying our stability result for $l = k+\frac{1}{2}$ for some $k \in \mathbb{N}$. 
		We set
		\[
		s_{\X, k} 
		= \sum_{j=0}^k \log \left( \frac{d\pi^{(j+\frac{1}{2})}}{d\pi^{(j)}}\right),
		\quad
		s_{\Y, k} 
		= \sum_{j=1}^k \log \left( \frac{d\pi^{(j)}}{d\pi^{(j-\frac{1}{2})}} \right)
		= \sum_{j=1}^k \log \left( \frac{d\nu}{d\nu^{(j-\frac{1}{2})}} \right),
		\]
		and obtain $\frac{d\pi^{(k)}}{d P_c} = \exp(s_{\X, k} + s_{\Y, k})$, which we abbreviate as $\pi^{(k)} = \exp(s_{\X, k} + s_{\Y, k}) P_c$.
		We remark that $s_{\Y,k} \in L^1(\nu)$ as a consequence of \eqref{eq:pythagorean_thm}.
		Further, the respective densities $\frac{d\pi^{(j+\frac{1}{2})}}{d\pi^{(j)}}$ are given by Lemma \ref{lem:proj_primal}. 
		We obtain for some constants $k_1, k_2 \in \mathbb{R}$
		\begin{align}
		\label{eq:optichain1}
		\begin{split}
		\pi^{(l)} &= \argmin_{\pi \in \Pi_c(\mu, *)} D_{\rm KL}(\pi, \pi^{(k)}) \\
		&= \argmin_{\pi \in \Pi_c(\mu, *)} D_{\rm KL}(\pi, \exp(s_{\X, k} + s_{\Y, k}) P_c)\\
		&\stackrel{(1)}{=} \argmin_{\pi \in \Pi_c(\mu, *)} D_{\rm KL}(\pi, \exp(k_1 + s_{\Y, k}) P_c)\\
		&= \argmin_{\pi \in \Pi_c(\mu, *)} \int c \,d\pi +  D_{\rm KL}(\pi, \exp(k_2 + s_{\Y, k}) P),
		\end{split}
		\end{align}
		where $(1)$ follows by Corollary \ref{cor:invarianceopti}, where we note that the problem on the left hand side of the equality is finite by plugging in the optimizer $\pi^{(l)}$, and the right hand side is finite by plugging in $P = \mu \otimes \nu$, while the form of $s_{\X, k}$ coming from Lemma \ref{lem:proj_primal} satisfies the assumptions of Corollary \ref{cor:invarianceopti}. To continue, we denote by $\tilde \nu$ the measure on $\Y$ that satisfies $\mu \otimes \tilde\nu = \exp(k_2 + s_{\Y, k}) P$, and find
		\begin{align*}
		D_{\rm KL}(\nu^{(l)}, \tilde\nu) &\leq D_{\rm KL}(\pi^{(l)}, \mu \otimes \tilde\nu) \\ &= D_{\rm KL}(\pi^{(l)}, \exp(k_1 + s_{\Y, k}) P_c) + (k_1 - k_2) + \int \log\left(\frac{dP_c}{dP}\right) \,d\pi^{(l)} < \infty
		\end{align*}
		by the integrability condition on $c$.
		Following up on \eqref{eq:optichain1}, we obtain
		\begin{align*}
		\pi^{(l)} 
		&= 
		\argmin_{\pi \in \Pi_c(\mu, \nu^{(l)})} \int c \,d\pi + D_{\rm KL}(\pi, \mu \otimes \tilde\nu) \\
		&= \argmin_{\pi \in \Pi_c(\mu, \nu^{(l)})} \int c \,d\pi + D_{\rm KL}(\pi, \mu \otimes \nu^{(l)}) + D_{\rm KL}(\nu^{(l)}, \tilde\nu) \\
		&= \argmin_{\pi \in \Pi_c(\mu, \nu^{(l)})} \int c \,d\pi + D_{\rm KL}(\pi, \mu \otimes \nu^{(l)}),
		\end{align*}
		where the last equality holds as the value of $D_{\rm KL}(\nu^{(l)}, \tilde\nu)$ does not depend on $\pi$.

		Thus, by Theorem \ref{thm:qual_stab}, we obtain that $\pi^{(k+\frac{1}{2})}$ converges in $W_p$ to $\pi^*$ for $k \rightarrow \infty$, $k \in \mathbb{N}$. 
		Since, as consequence of \eqref{eq:pythagorean_thm}, $D_{\rm KL}(\pi^{(k)}, \pi^{(k-\frac{1}{2})})$ converges to zero, $\pi^{(k)}$ converges weakly to $\pi^*$ as well for $k\in \mathbb{N}$. Finally, since the $p$-th moments of $\pi^{(k)}$ converge (by convergence of the moments of the marginals as shown above), $\pi^{(k)}$ also converges in $W_p$ to $\pi^*$, which completes the proof.\qedhere

	\end{proof}
\end{proposition}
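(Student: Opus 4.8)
The plan is to show that each causal iterate $\pi^{(k+\frac12)}$ is \emph{itself} the exact optimizer of an entropic causal transport problem $E_c(\mu,\nu^{(k+\frac12)},c)$ with perturbed second marginal $\nu^{(k+\frac12)}:=\pi^{(k+\frac12)}_{0,1:N}$, and then to read off $W_p$-convergence from the qualitative stability result, Theorem \ref{thm:qual_stab}, once we know $W_{p,bc}(\nu^{(k+\frac12)},\nu)\to 0$.

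First I would establish convergence of the marginals. Applying the Pythagorean identity for relative entropy \cite{csiszar1975divergence} along the alternating projections --- legitimate since $\mu\otimes\nu\in\Pi_c(\mu,\nu)$ has finite entropy with respect to $P$ and hence with respect to $P_c$ --- shows $\sum_l D_{\rm KL}(\pi^{(l)},\pi^{(l-\frac12)})<\infty$, so in particular $D_{\rm KL}(\pi^{(l)},\pi^{(l-\frac12)})\to 0$. A short computation as in \cite[Proposition 2.1]{ruschendorf1995convergence} then gives $D_{\rm KL}(\mu^{(l)},\mu)\to 0$ and $D_{\rm KL}(\nu^{(l)},\nu)\to 0$ for $l\in\{\frac12,1,1+\frac12,\dots\}$; under the exponential-moment hypotheses, a weighted transport--entropy inequality \cite[Corollary 2.3]{bolley2005weighted} upgrades this to $W_p$-convergence of the marginals, and in particular to convergence of their $p$-th moments. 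Combining Pinsker's inequality with Lemma \ref{lem:AV_TV} gives, in addition, convergence of the marginals in adapted variation, and convergence in $\|\cdot\|_{AV}$ together with convergence of the $p$-th moments yields $W_{p,bc}(\mu^{(l)},\mu)\to 0$ and $W_{p,bc}(\nu^{(l)},\nu)\to 0$. Since the first marginal of every iterate $\pi^{(k+\frac12)}$ equals $\mu$ exactly, no tail condition on $\mu$ is needed for those iterates.

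The core of the argument is the identification of $\pi^{(k+\frac12)}$ with an entropic causal optimizer. I would write $\frac{d\pi^{(k)}}{dP_c}=\exp(s_{\X,k}+s_{\Y,k})$, where $s_{\X,k}$ collects the log-densities accumulated during the causal projection steps --- by Lemmas \ref{lem:proj_primal} and \ref{lem:proj_dual} this puts $s_{\X,k}\in\mathcal S_{\X,\mu}$ and, crucially, forces the kernels of $\pi^{(k)}$ to have exactly the conditional density structure required by Corollary \ref{cor:invarianceopti} --- while $s_{\Y,k}$ collects those accumulated during the normal steps, hence depends on $y$ only and lies in $L^1(\nu)$ by the Pythagorean identity. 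Corollary \ref{cor:invarianceopti} then permits deleting the $s_{\X,k}$-factor from the reference measure of the causal projection, so that $\pi^{(k+\frac12)}=\argmin_{\pi\in\Pi_c(\mu,*)}\int c\,d\pi+D_{\rm KL}(\pi,\mu\otimes\tilde\nu)$ with $\mu\otimes\tilde\nu\propto\exp(s_{\Y,k})P$. After verifying --- via the growth assumption on $c$ and the moment assumptions --- that $D_{\rm KL}(\nu^{(k+\frac12)},\tilde\nu)<\infty$, the chain rule for relative entropy splits $D_{\rm KL}(\pi,\mu\otimes\tilde\nu)=D_{\rm KL}(\pi,\mu\otimes\nu^{(k+\frac12)})+D_{\rm KL}(\nu^{(k+\frac12)},\tilde\nu)$ on $\Pi_c(\mu,\nu^{(k+\frac12)})$, the last term being independent of $\pi$. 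Hence $\pi^{(k+\frac12)}$ is the optimizer of $E_c(\mu,\nu^{(k+\frac12)},c)$, and Theorem \ref{thm:qual_stab} combined with $W_{p,bc}(\nu^{(k+\frac12)},\nu)\to 0$ gives $\pi^{(k+\frac12)}\to\pi^*$ in $W_p$. Finally, $D_{\rm KL}(\pi^{(k)},\pi^{(k-\frac12)})\to 0$ and Pinsker give $\|\pi^{(k)}-\pi^{(k-\frac12)}\|_{TV}\to0$, hence $\pi^{(k)}\to\pi^*$ weakly; under the additional exponential-moment assumption on $\mu$ the $p$-th moments of $\pi^{(k)}$ converge too, promoting this to convergence in $W_p$.

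I expect the main obstacle to be the bookkeeping in the previous paragraph: one must check that the $\X$-part of the density of $\pi^{(k)}$ genuinely has the special form demanded by Corollary \ref{cor:invarianceopti} (its conditional densities integrating to $1$ against the kernels $\mu^{X_{1:t}}_{t+1}$), and that every relative entropy appearing in the chain-rule manipulations is finite. The moment and growth hypotheses are precisely what make these finiteness checks work, and they are also what is needed to pass from total-variation and adapted-variation convergence of the marginals to the $W_{p,bc}$-convergence required as input by Theorem \ref{thm:qual_stab}.
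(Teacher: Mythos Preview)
Your proposal is correct and follows essentially the same route as the paper's own proof: Pythagorean identity for the alternating projections, KL-convergence of the marginals upgraded to $W_{p,bc}$ via Pinsker, Lemma \ref{lem:AV_TV}, and the transport--entropy inequality, then identification of $\pi^{(k+\frac12)}$ as the optimizer of $E_c(\mu,\nu^{(k+\frac12)},c)$ by stripping off $s_{\X,k}$ via Corollary \ref{cor:invarianceopti} and a chain-rule manipulation, and finally Theorem \ref{thm:qual_stab}. The only minor imprecision is your claim that $s_{\X,k}\in\mathcal S_{\X,\mu}$: in the unbounded-cost setting Lemma \ref{lem:proj_dual} is not directly available, and the paper instead relies solely on the structural form of the causal-projection density from Lemma \ref{lem:proj_primal} to verify the hypotheses of Corollary \ref{cor:invarianceopti}---but you correctly identify this as the operative mechanism.
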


\subsection{Linear convergence for bounded cost functions}
For the main result of this section, we require attainment of dual optimizers and adapted versions of Schrödinger equations.
We first define the adapted Schrödinger equations, \eqref{eq:sinkhorn_step_causal_dual} and \eqref{eq:sinkhorn_step_normal}, and illustrate their relation to the dual Sinkhorn step from Lemma \ref{lem:proj_dual}.
We recall that the supremum norm $\| \cdot \|_\infty$ of a function $f \colon \X \times \Y \to \mathbb R$ is given by $\|f\|_\infty := \sup_{(x,y) \in \X \times \Y} |f(x,y)|$.
\begin{lemma}[Dual Sinkhorn steps]
	\label{lem:sinkhorn_step_bounds}
	Let $g \in \mathcal{S}_{\Y, \nu}$ and we write $g = \sum_{t=1}^N g_t$, $g_t \in \mathcal{A}_{\Y, \nu}$. We define the causal Sinkhorn step $f = \sum_{t=1}^N f_t \in \mathcal{S}_{\X, \mu}$ recursively by setting $c_N = c$,
	\begin{equation}
	\label{eq:sinkhorn_step_causal_dual}
	\begin{aligned}
	a_{t} &= - \log \int \exp(g_{t} - c_{t}) \,d\nu_{t}^{Y_{1:t-1}},&& t=1, \dots, N, \\
	f_{t+1} &= a_{t+1} - c_{t}, ~\text{ where } c_{t} = \int a_{t+1} \,d\mu_{t+1}^{X_{1:t}}, && t=1, \dots, N-1,
	\end{aligned}
	\end{equation}
	and $f_1 = a_1$. Then, $f$ satisfies
	\begin{align*}
	f &\in \argmax_{s \in \mathcal{S}_{\X, \mu}} \int s + g - \exp(s+g-c) + 1 \,dP, \\
	\|f_t\|_\infty &\leq 2 \Big( \sum_{s=t+1}^N \|g_s\|_\infty + \|c\|_\infty\Big), ~~~t = 2, \dots, N,\\
	f_1(x_1) &\in \Big[\int f_1 \,d\mu_1 - 2 \Big( \sum_{s=2}^N \|g_s\|_\infty + \|c\|_\infty\Big), \int f_1 \,d\mu_1 + 2 \Big( \sum_{s=2}^N \|g_s\|_\infty + \|c\|_\infty\Big)\Big],\, x_1 \in \X_1.
	\end{align*}
	The converse anticausal step is defined analogously 
	by inverting the roles of the coordinates $X$ and $Y$ and of the functions $f$ and $g$, and satisfies the same bounds. Finally, given $f \in \mathcal{S}_{\X, \mu}$, the converse normal step is given by defining $g \in \mathcal{S}_\Y$ via 
	\begin{equation}
	\label{eq:sinkhorn_step_normal}
	g = - \log \int \exp(f - c) \,d\mu,
	\end{equation}
	which satisfies, with decomposition $g = \sum_{t=1}^N g_t$ as in Remark \ref{rem:decomp}, 
	\begin{align*}
	g &\in \argmax_{s \in \mathcal{S}_\Y} \int f + s - \exp(f+s-c) + 1 \,dP, \\
	\|g_t\|_\infty &\leq 2 \Big( \sum_{s=t+1}^N \|f_s\|_\infty + \|c\|_\infty \Big), ~ t=2, \dots, N,\\
	g_1(y_1) &\in \Big[\int g_1 \,d\nu_1 - 2 \Big( \sum_{s=2}^N \|f_s\|_\infty + \|c\|_\infty\Big), \int g_1 \,d\nu_1 + 2 \Big( \sum_{s=2}^N \|f_s\|_\infty + \|c\|_\infty\Big)\Big], \, y_1 \in \Y_1.
	\end{align*}
	\begin{proof}
		First, we show the maximizing property of $f$. By weak duality (i.e., Fenchel-Young inequality  applied to the function $y \log y$, $xy \leq y \log y + \exp(x-1)$ for $x,y \in \mathbb R^+$)  we have that
		\[
		\sup_{s \in \mathcal{S}_{\X, \mu}} \int s - \exp(s + g - c) +1 \,dP \leq \inf_{\pi \in \Pi_c(\mu, *)} \int g+c\,d\pi + D_{\rm KL}(\pi, P).
		\]
		Further, one quickly sees that duality is attained whenever there exists a $\pi \in \Pi(\mu, *)$ such that $\frac{d\pi}{dP} = \exp(s + g - c)$ for some $s \in \mathcal{S}_{\X, \mu}$, since then, for any $\pi \in \Pi_c(\mu, *)$
		\[
		\int s - \exp(s + g - c) + 1 \,dP = \int s \,dP = \int s + (g-c) - (g-c) \,d\pi = \int g-c \, d\pi + D_{\rm KL}(\pi, P)
		\]
		where a construction for such a function $s$ is given in Lemma \ref{lem:proj_dual}. This construction coincides with the definition of $f$ here, and thus $f$ is a maximizer.\footnote{Note the simplification that since $g_{s}$ for $s < t$ does not depend on $X_{t:T}$, it does not influence $f_t$ and can thus be disregarded in the definition of $a_t$. Further, note that the assumption in Lemma \ref{lem:proj_dual} such that $\exp(g-c)$ is a density can be made without loss of generality by normalization.}
		
		Regarding the bounds for $f$, we introduce some notation: for some function $F : \X \times \Y \rightarrow \mathbb{R}$, we denote by 
		\[
		\Var(F, X_t) := \sup\{ F(x, y) - F(\tilde{x}, y) : y \in \Y, x, \tilde{x} \in \X \text{ s.t.~} x_s = \tilde{x}_s \text{ for all } s \neq t\}
		\]
		its maximum variation in the $t$-th variable of $\X$ (and similarly we define $\Var(F, Y_t)$). Then one finds $\Var(a_t, X_t) \leq \Var(c_t, X_t)$ by the triangle inequality and since $g_t$ is constant in $X_t$. Since for $t \geq 2$, $f_t$ is a centered version (in the variable $X_t$, given all other variables) of $a_t$, it thus holds 
		\begin{equation}
		\label{eq:sink_bound_eq1}
		\|f_t\|_\infty \leq \Var(a_t, X_t) \leq \Var(c_t, X_t) \leq 2 \|c_t\|_\infty \leq 2 \|a_{t+1}\|_\infty.
		\end{equation}
		Further, a straightforward calculation reveals $\|a_t\|_\infty \leq \|g_t\|_\infty + \|a_{t+1}\|_\infty$, which recursively and combined with \eqref{eq:sink_bound_eq1} leads to the stated bound for $f_t$ for $t \geq 2$. The case $t=1$ works analogously by noting that $f_1$ takes values in the range $\int f_1 \,d\mu_1 \pm \Var(f_1, X_1)$.
		
		Regarding $g$, the maximizing property in this case is the usual Schrödinger equation, and thus clear. The bounds follow completely analogously to the case for $f$, noting that $\Var(g_t, Y_t) \leq \Var(c, Y_t) + \sum_{s=t+1}^N \Var(f_s, Y_t)$.
	\end{proof}
\end{lemma}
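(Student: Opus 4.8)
The statement bundles three assertions for the causal step and their mirror images for the anticausal and normal steps: (a) the recursively defined $f$ maximizes the appropriate dual functional; (b) quantitative sup-norm bounds on the components $f_t$; (c) the corresponding facts for $g$ defined via \eqref{eq:sinkhorn_step_normal}. The plan is to reduce (a) entirely to Lemma~\ref{lem:proj_dual}, then prove (b) by an elementary variation/centering argument run along the backward recursion, and finally obtain (c) by the same two arguments with the roles of $X$ and $Y$ swapped.

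For part (a), I would first record the weak-duality inequality: by the Fenchel--Young inequality $xy \le y\log y + e^{x-1}$ applied pointwise with $x = s+g-c$ and $y = \frac{d\pi}{dP}$, one gets, for any $s \in \mathcal{S}_{\X,\mu}$ and any $\pi \in \Pi_c(\mu,*)$ with finite entropy,
\[
\int s - \exp(s+g-c) + 1 \, dP \le \int (g+c)\,d\pi + D_{\rm KL}(\pi,P).
\]
Then I would observe that equality holds once we exhibit a single $\pi \in \Pi_c(\mu,*)$ with $\frac{d\pi}{dP} = \exp(s+g-c)$: indeed for such a pair and \emph{any} causal $\pi'$, since $\int s\,d\pi' = \int s\,dP$ (because $s \in \mathcal S_{\X,\mu}$ imposes exactly the causal/marginal constraints), the chain $\int s - \exp(s+g-c)+1\,dP = \int s\,dP = \int s\,d\pi' = \int (g+c)\,d\pi' + D_{\rm KL}(\pi',P)$ holds, forcing $\pi'$ to be optimal and both sides to coincide. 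The construction of $s = \sum_t f_t$ with this property is exactly the content of Lemma~\ref{lem:proj_dual} (with reference measure $\gamma$ having density $\exp(g-c)$ w.r.t.\ $P$, after harmless normalization so that $\exp(g-c)$ integrates to one); matching the recursion in Lemma~\ref{lem:proj_dual} with \eqref{eq:sinkhorn_step_causal_dual} gives $f$, so $f$ is a maximizer. One footnote-level remark is needed: the $g_s$ with $s<t$ do not depend on $X_{t:N}$, so they drop out of the normalizing integral defining $a_t$ and may be omitted.

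For part (b), I would introduce the oscillation functional $\Var(F,X_t) := \sup\{F(x,y)-F(\tilde x,y)\}$ over pairs agreeing off coordinate $t$, and run the backward recursion. Since $g_t$ does not depend on $x_t$, the triangle inequality gives $\Var(a_t,X_t) \le \Var(c_t,X_t)$, where $c_t = \int a_{t+1}\,d\mu_{t+1}^{X_{1:t}}$ and hence $\Var(c_t,X_t)\le 2\|a_{t+1}\|_\infty$; as $f_t$ for $t\ge 2$ is the $X_t$-centering of $a_t$, we get $\|f_t\|_\infty \le \Var(a_t,X_t)\le 2\|a_{t+1}\|_\infty$. A separate, direct estimate $\|a_t\|_\infty \le \|g_t\|_\infty + \|a_{t+1}\|_\infty$ (from $|\log\int e^{g_t-c_t}\,d\nu_t^{Y_{1:t-1}}| \le \|g_t\|_\infty + \|c_t\|_\infty$ and $\|c_t\|_\infty\le\|a_{t+1}\|_\infty$) unwinds, with $c_N=c$, to $\|a_{t+1}\|_\infty \le \sum_{s=t+1}^N\|g_s\|_\infty + \|c\|_\infty$, yielding the claimed bound for $t\ge 2$; the $t=1$ case is the same with $f_1$ landing in $\int f_1\,d\mu_1 \pm \Var(f_1,X_1)$. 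Part (c) for the normal step: here $g = -\log\int e^{f-c}\,d\mu$ is the usual Schrödinger map, so the maximizing property is standard entropic-OT duality, and the bounds repeat the oscillation argument with $\Var(g_t,Y_t)\le \Var(c,Y_t) + \sum_{s>t}\Var(f_s,Y_t)$.

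The main obstacle, such as it is, is bookkeeping rather than depth: one must be careful that the functional spaces line up (that the $s \in \mathcal S_{\X,\mu}$ produced by Lemma~\ref{lem:proj_dual} is genuinely the $f$ of \eqref{eq:sinkhorn_step_causal_dual}, including the correct handling of the already-integrated lower-index $g_s$ terms and the normalization of $\exp(g-c)$), and that the oscillation bounds propagate with the right constant $2$ at each centering step without accumulating extra factors. No single step is hard, but the interlocking of the dual-attainment argument with the exact recursion from Lemma~\ref{lem:proj_dual} is where an error would most naturally creep in.
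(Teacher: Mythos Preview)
Your proposal is correct and follows essentially the same approach as the paper: weak duality via Fenchel--Young, attainment by invoking the explicit construction of Lemma~\ref{lem:proj_dual} (with the same observation that lower-index $g_s$ drop out and that normalization is harmless), and the same $\Var(F,X_t)$ oscillation/centering argument, including the recursive estimate $\|a_t\|_\infty \le \|g_t\|_\infty + \|a_{t+1}\|_\infty$, to obtain the bounds. The treatment of the normal step is likewise identical.
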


\begin{lemma}[Dual optimizers are attained]
	Let $c$ be measurable and bounded, then
	\begin{align*}
	\sup_{h \in \mathcal{H}_\bullet} 
	\int h - \exp(h - c) \,dP + 1
	= 
	\max_{h \in \mathcal{H}_\bullet} 
	\int h - \exp(h-c) \,dP + 1.
	\end{align*}
	\begin{proof}
		Take a maximizing sequence $(\hat{h}^k)_{k \in \mathbb{N}}$. Define $h^k$ by applying to $\hat{h}^k$ the Sinkhorn step (alternating causal and anticausal, respectively causal and normal) $N+1$-times, as defined in Lemma \ref{lem:sinkhorn_step_bounds}. Then, $h^k$ is still a maximizing sequence since Lemma \ref{lem:sinkhorn_step_bounds} shows that the value of $\int h - \exp(h - c) \, dP$ can only increase by applying the Sinkhorn steps, and further $h^k$ is uniformly bounded by a constant only depending on $N$ and $\|c\|_\infty$. In fact, each component of $h^k = \sum_{t=1}^N f_t^k + g_t^k$ is uniformly bounded, since we can assume without loss of generality (for example by translating $f_1^k$ by a constant $b \in \mathbb R$ and $g_1^k$ by $-b$) that $\int f_1^k \,d\mu_1 = \int g_1^k \,d\nu_1$, which are both bounded above and below by $\|c\|_\infty$. We can apply Koml\'{o}s' Lemma \cite[Theorem 1a]{komlos1967generalization} which yields a subsequence (which we relabel by the initial sequence) such that
		\[
		\tilde{f}_t^k := \frac{1}{k} \sum_{i=1}^k f_t^k
		\text{ and } 
		\tilde{g}_t^k := \frac{1}{k} \sum_{i=1}^k g_t^k
		\]
		converge $P$-almost surely to some element $f_t^*, g_t^* \in L^1(P)$, respectively, and the limits clearly satisfy the same uniform bounds as the sequence. 
		Write $\tilde h^k := \sum_{t = 1}^N \tilde f_t^k + \tilde g_t^k$.
		By uniform boundedness, this convergence also holds in $L^1(P)$ and thus $f_t^* \in \mathcal{A}_{\X, \mu, t}, g_t^* \in \mathcal{A}_{\Y, \nu, t}$ and thus $h^* = \sum_{t=1}^N f_t^* + g_t^* \in \mathcal{H}_\bullet$.
		The next computation reveals optimality of $h^\ast$:
		\begin{align*}
		\int h^* - \exp(h^* - c) \,dP &\geq \limsup_{k\rightarrow \infty} \int \tilde{h}^k - \exp(\tilde{h}^k - c) \,dP \\ 
		&\geq \limsup_{k\rightarrow \infty} \frac{1}{k} \sum_{i=1}^k \int h^i - \exp(h^i - c) \,dP \\
		&= \sup_{h \in \mathcal{H}_\bullet} \int h - \exp(h - c) \,dP,
		\end{align*}
		where we used Fatou's lemma  and the concavity of $x \mapsto x - \exp(x - a)$ for an arbitrary $a \in \mathbb{R}$, which is uniformly upper bounded when $a \le \|c\|_\infty$.
	\end{proof}
\end{lemma}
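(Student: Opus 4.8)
The plan is to run the standard existence-of-dual-maximizers argument: take a maximizing sequence, replace it by one that is uniformly bounded componentwise by applying the Sinkhorn steps of Lemma \ref{lem:sinkhorn_step_bounds}, and then extract a limit with Komlós' lemma. First note that weak duality (the Fenchel--Young inequality $xy \le y \log y + e^{x-1}$) gives $\sup_{h\in\mathcal H_\bullet}\int h - e^{h-c}\,dP+1 \le E_\bullet(\mu,\nu,c) \le \int c\,dP \le \|c\|_\infty < \infty$, since $c$ is bounded; hence the supremum is finite and it suffices to produce one $h^\ast\in\mathcal H_\bullet$ realizing it.

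Starting from a maximizing sequence, I would apply the Sinkhorn step of Lemma \ref{lem:sinkhorn_step_bounds} (alternating causal/anticausal in the bicausal case, causal/normal in the causal case) $N+1$ times to each element, obtaining $h^k=\sum_{t=1}^N f_t^k+g_t^k$. Since a Sinkhorn step never decreases the functional, $(h^k)_k$ is still maximizing, and its functional values lie in a bounded range. The bounds in Lemma \ref{lem:sinkhorn_step_bounds} are self-improving from the last time step backwards: $\|f_N^k\|_\infty\le 2\|c\|_\infty$ unconditionally, so after the next step $g_{N-1}^k$ and $f_{N-1}^k$ are bounded by a constant depending only on $\|c\|_\infty$, and so on, so that after $N+1$ steps every component $f_t^k,g_t^k$ with $t\ge 2$ is bounded by a constant depending only on $N$ and $\|c\|_\infty$. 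For $t=1$ there is a gauge freedom — replacing $f_1^k$ by $f_1^k+b$ and $g_1^k$ by $g_1^k-b$ leaves $h^k$ unchanged — which I would fix by normalizing $\int f_1^k\,d\mu_1=\int g_1^k\,d\nu_1$; since $\int f_t^k\,dP=\int g_t^k\,dP=0$ for $t\ge 2$, this common value equals $\tfrac12\int h^k\,dP$, which is bounded because $u\mapsto u-e^{u-a}$ is coercive (apply Jensen to the exponential term and use that the functional value of $h^k$ is bounded). The range bounds of Lemma \ref{lem:sinkhorn_step_bounds} then make $f_1^k,g_1^k$ uniformly bounded as well.

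Now all $2N$ sequences $(f_t^k)_k,(g_t^k)_k$ are uniformly $L^\infty$-bounded, hence $L^1(P)$-bounded, so Komlós' lemma \cite{komlos1967generalization} (applied on a common subsequence) yields Cesàro averages $\tilde f_t^k,\tilde g_t^k$ converging $P$-a.s.\ to bounded limits $f_t^\ast,g_t^\ast$. Since $\mathcal A_{\X,\mu,t}$ and $\mathcal A_{\Y,\nu,t}$ are convex and closed under bounded $P$-a.s.\ limits — their defining centering conditions survive by dominated convergence — we get $h^\ast:=\sum_t f_t^\ast+g_t^\ast\in\mathcal H_\bullet$. Writing $\tilde h^k:=\sum_t\tilde f_t^k+\tilde g_t^k$, concavity of $u\mapsto u-e^{u-c}$ together with Jensen gives $\int\tilde h^k-e^{\tilde h^k-c}\,dP\ge\tfrac1k\sum_{i\le k}\int h^i-e^{h^i-c}\,dP$; the right-hand side converges to the supremum, while the left-hand side converges to $\int h^\ast-e^{h^\ast-c}\,dP$ by bounded convergence (equivalently, reverse Fatou justified by the uniform bounds). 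Hence $\int h^\ast-e^{h^\ast-c}\,dP+1\ge\sup$, and since $h^\ast\in\mathcal H_\bullet$ the reverse inequality is trivial, so the supremum is attained at $h^\ast$.

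The main obstacle is the bookkeeping around the uniform bounds: Lemma \ref{lem:sinkhorn_step_bounds} controls a component only in terms of the norms of the \emph{later} components of the previous iterate, so one must verify that iterating the Sinkhorn map $N+1$ times genuinely propagates a $\|c\|_\infty$-dependent bound through all time steps, and separately dispose of the additive-constant ambiguity of the first-coordinate components by choosing the right representative in each gauge orbit. Everything else — Komlós, closedness of the function classes, passing the functional to the limit along the averaged sequence — is routine.
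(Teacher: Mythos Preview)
Your proposal is correct and follows essentially the same argument as the paper's proof: maximizing sequence, $N{+}1$ Sinkhorn steps to force uniform componentwise bounds (with the gauge normalization at $t=1$), Koml\'os' lemma on the bounded components, and then concavity plus Fatou/dominated convergence to pass to the limit. Your write-up is in places more explicit than the paper's (e.g.\ the finiteness of the supremum and the backward propagation of the bounds), but there is no substantive difference in strategy.
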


\begin{lemma}[Schrödinger equations]\label{lem:schrodinger}
	Let $c$ be bounded, then any dual maximizer 
	\[
	h^* \in \argmax_{h \in \mathcal{H}_\bullet} \int h - \exp(h-c) \,dP + 1
	\]
	with $h^* = f^*+g^* = \sum_{t=1}^N f^*_t + g^*_t$
	satisfies the adapted versions of the Schrödinger equations, i.e., $f^*$ satisfies \eqref{eq:sinkhorn_step_causal_dual} and, in case of bicausal OT, $g^*$ satisfies the respective anticausal analogue of \eqref{eq:sinkhorn_step_causal_dual}, and in case of causal OT, $g^*$ satisfies \eqref{eq:sinkhorn_step_normal}. Further, using the normalization $\int f_1 \,d\mu_1 = \int g_1 \,d\nu_1$, there exists a constant $C_N$ only depending on $N$ such that
	\[\max\{\|h^*\|_\infty, \|f_1^*\|_\infty, \|f_2^*\|_\infty, \dots, \|f_N^*\|_\infty, \|g_1^*\|_\infty, \dots, \|g_N^*\|_\infty\} \leq C_N \, \|c\|_\infty.\]
\end{lemma}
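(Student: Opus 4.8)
The plan is to show that \emph{any} dual maximizer $h^\ast = f^\ast + g^\ast = \sum_{t=1}^N (f_t^\ast + g_t^\ast)\in\mathcal H_\bullet$ is forced to be a fixed point of the adapted Sinkhorn iteration, and then to read the claimed $L^\infty$ bounds off the step-wise estimates of Lemma~\ref{lem:sinkhorn_step_bounds}. Throughout write $\Phi(h):=\int h - \exp(h-c)\,dP + 1$, so that $h^\ast\in\argmax_{h\in\mathcal H_\bullet}\Phi(h)$.

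First I would prove the Schrödinger equations. Since $\mathcal H_\bullet$ is the linear space $\mathcal S_{\X,\mu}\oplus\mathcal S_\Y$ (causal case) resp.\ $\mathcal S_{\X,\mu}\oplus\mathcal S_{\Y,\nu}$ (bicausal case), we have $\{s+g^\ast : s\in\mathcal S_{\X,\mu}\}\subseteq\mathcal H_\bullet$, hence $f^\ast$ maximizes $s\mapsto\Phi(s+g^\ast)$ over $\mathcal S_{\X,\mu}$; I would note this maximizer is unique up to $P$-null sets, because for two maximizers $s_1,s_2$ the midpoint $\tfrac12(s_1+s_2)$ lies in the same space and, by linearity of $s\mapsto\int s\,dP$ and strict concavity of $x\mapsto -e^x$, satisfies $\Phi(\tfrac12(s_1+s_2)+g^\ast)\ge\tfrac12\Phi(s_1+g^\ast)+\tfrac12\Phi(s_2+g^\ast)$ with equality only if $s_1=s_2$ $P$-a.s. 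On the other hand, Lemma~\ref{lem:sinkhorn_step_bounds} says the causal Sinkhorn step $\hat f$ built from $g^\ast$ via \eqref{eq:sinkhorn_step_causal_dual} (equivalently, the dual projection of Lemma~\ref{lem:proj_dual}) also lies in this $\argmax$, so $f^\ast=\hat f$ $P$-a.s.; since the decomposition of an element of $\mathcal S_{\X,\mu}$ into its $\mathcal A_{\X,\mu,t}$-summands is pinned down by the $\mu$-centering in the $t$-th coordinate (peel off $f_N$, then $f_{N-1}$, and so on), the components $f_t^\ast$ coincide $P$-a.s.\ with those of $\hat f$, i.e.\ $f^\ast$ satisfies \eqref{eq:sinkhorn_step_causal_dual}. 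The mirror argument, using the normal step \eqref{eq:sinkhorn_step_normal} in the causal case and the anticausal analogue of \eqref{eq:sinkhorn_step_causal_dual} in the bicausal case (both furnished by Lemma~\ref{lem:sinkhorn_step_bounds}), shows $g^\ast$ satisfies the stated equation.

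Next I would extract the bounds. For $t\ge 2$ set $B_t:=\max\{\|f_t^\ast\|_\infty,\|g_t^\ast\|_\infty\}$; feeding the two Schrödinger equations into Lemma~\ref{lem:sinkhorn_step_bounds} gives the self-referential chain $B_t\le 2\big(\sum_{s=t+1}^N B_s + \|c\|_\infty\big)$ for $t=2,\dots,N$ (empty sum at $t=N$), and a backward induction yields $B_t\le 2\cdot 3^{N-t}\|c\|_\infty$, so $\sum_{s=2}^N B_s\le C_N\|c\|_\infty$. For the first components I would observe that, $f^\ast$ satisfying \eqref{eq:sinkhorn_step_causal_dual}, the function $\exp(h^\ast-c)$ is the $P$-density of some $\pi\in\Pi_c(\mu,\ast)$, so $\int\exp(h^\ast-c)\,dP=1$ and $\Phi(h^\ast)=\int h^\ast\,dP=\int f_1^\ast\,d\mu_1+\int g_1^\ast\,d\nu_1$; moreover, with both Schrödinger equations in force, $\pi\in\Pi_\bullet(\mu,\nu)$, and since $h^\ast\in\mathcal H_\bullet$ the defining (anti)causality relations give $\int h^\ast\,d\pi=\int h^\ast\,dP$, whence $\Phi(h^\ast)=\int c\,d\pi+D_{\rm KL}(\pi,P)\ge -\|c\|_\infty$; together with the pointwise bound $h^\ast-\exp(h^\ast-c)\le c-1$ (so $\Phi(h^\ast)\le\|c\|_\infty$) this yields $\big|\int f_1^\ast\,d\mu_1+\int g_1^\ast\,d\nu_1\big|\le\|c\|_\infty$. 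Under the normalization $\int f_1^\ast\,d\mu_1=\int g_1^\ast\,d\nu_1$ both means are then bounded by $\tfrac12\|c\|_\infty$, and Lemma~\ref{lem:sinkhorn_step_bounds} bounds the oscillation of $f_1^\ast$ (resp.\ $g_1^\ast$) by $2\big(\sum_{s=2}^N B_s+\|c\|_\infty\big)$, so $\|f_1^\ast\|_\infty,\|g_1^\ast\|_\infty\le C_N\|c\|_\infty$; summing, $\|h^\ast\|_\infty\le\sum_{t=1}^N(\|f_t^\ast\|_\infty+\|g_t^\ast\|_\infty)\le C_N\|c\|_\infty$ after enlarging $C_N$.

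The step I expect to be the main obstacle is the first one: combining (i) the identification, via Lemmas~\ref{lem:proj_dual}--\ref{lem:sinkhorn_step_bounds}, of the Sinkhorn step as the maximizer over the relevant function class with (ii) strict concavity of the dual functional, to conclude that the \emph{global} maximizer $h^\ast$ must be a fixed point of the adapted Sinkhorn iteration, and then passing from $P$-a.s.\ equality of functions to equality of their $\mathcal A_{\X,\mu,t}$- and $\mathcal A_{\Y,\nu,t}$-components. Once this is in place, the remaining steps are bookkeeping with the quantitative estimates of Lemma~\ref{lem:sinkhorn_step_bounds}, plus the two elementary facts that $\exp(h^\ast-c)$ integrates to one against $P$ and that $\int h^\ast\,dP=\int h^\ast\,d\pi$ for the associated optimal coupling $\pi\in\Pi_\bullet(\mu,\nu)$.
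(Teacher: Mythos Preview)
Your proposal is correct and takes a genuinely different route from the paper's proof. The paper derives the Schr\"odinger equations by differentiating $\Phi$ at $h^\ast$ along test directions $\tilde f_t(x,y)=z_t(x_{1:t-1},y_{1:t-1})\,U_t(x_t)\in\mathcal A_{\X,\mu,t}$, obtaining first-order conditions that force the auxiliary quantity $K_t:=\int\exp(-c_{t-1}+\sum_{i\ge t}f_i^\ast+g_i^\ast-c_i+c_{i-1})\,dP_{t+1:N,t:N}^{X_{1:t},Y_{1:t-1}}$ to be constant in $x_t$; a backward induction then identifies $f_t^\ast=a_t-c_{t-1}$. Your argument is more conceptual: you observe that $f^\ast$ maximizes $s\mapsto\Phi(s+g^\ast)$ over $\mathcal S_{\X,\mu}$, that the Sinkhorn step $\hat f$ built from $g^\ast$ is another maximizer (this is exactly the content of the first assertion in Lemma~\ref{lem:sinkhorn_step_bounds}), and that strict concavity of $x\mapsto-e^x$ forces $f^\ast=\hat f$ $P$-a.s. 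The subsequent ``peeling'' argument for uniqueness of the $\mathcal A_{\X,\mu,t}$-components is sound (only $f_N$ depends on $x_N$, and it is $\mu_N^{x_{1:N-1}}$-centered, so it is recovered by subtracting the conditional mean; iterate). Your approach is cleaner and avoids the somewhat delicate inductive manipulation of $K_t$ in the paper; the paper's approach, on the other hand, is self-contained in that it does not rely on knowing in advance that the Sinkhorn step attains the partial maximum.

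For the bounds, both arguments feed the Schr\"odinger equations back into Lemma~\ref{lem:sinkhorn_step_bounds} and induct backwards; your explicit recursion $B_t\le 2\cdot 3^{N-t}\|c\|_\infty$ is correct. One small simplification: to get $\Phi(h^\ast)\ge-\|c\|_\infty$ you do not need to identify the primal coupling $\pi$; it follows directly from duality, since $\Phi(h^\ast)=E_\bullet(\mu,\nu,c)=\inf_{\pi\in\Pi_\bullet}\int c\,d\pi+D_{\rm KL}(\pi,P)\ge-\|c\|_\infty$.
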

\begin{proof}
	Let $a_t, c_t$, $t = 1, \ldots, N$ be given as in \eqref{eq:sinkhorn_step_causal_dual}.
	
	We have to show that $f^\ast_t$ satisfies, for $t = 1,\ldots,N$,
	\begin{equation}
	\label{eq:first_order_cond_f_schrodinger}
	f_t^\ast = 
	\begin{cases}
	a_1 & t = 1,\\
	a_t - c_{t-1} & \text{else}.
	\end{cases}
	\end{equation}
	Note that if \eqref{eq:first_order_cond_f_schrodinger} holds for $t < N$ and $s = t+1,\ldots, N$, then $P$-almost surely
	\begin{equation}
	\label{eq:first_order_cond_remark}
	\int \exp \Big( \sum_{s = t+1}^N f_s^\ast + g_s^\ast - c_s + c_{s - 1} \Big) \, dP_{t+1:N, t+1:N}^{X_{1:t}, Y_{1:t}} = 1.
	\end{equation}
	Next, we differentiate under the integral sign, which is justified by standard results, e.g., \cite[Theorem 2.27]{folland1999real}), in order to obtain first order optimality conditions that $h^*$ has to satisfy:
	for $t = 2,\ldots,N$ consider the first order conditions for the mapping
	\begin{equation}
	\label{eq:first_first_order_cond_mapping}
	\alpha \mapsto 
	\int 
	h^* + \alpha \tilde{f}_t - \exp(h^* + \alpha \tilde{f}_t - c) 
	\,dP
	\end{equation}
	and the test function $\tilde f_t \in \mathcal H_{c}$ given by
	\[
	\tilde{f}_t(x_{1:t}, y_{1:t-1}) = z_t(x_{1:t-1}, y_{1:t-1}) \underbrace{\Big(u_t(x_t) - \int u_t(\tilde{x}_t) \mu_{t}^{x_{1:t-1}}(d\tilde{x}_t)\Big)}_{=: U_t(x_t)},
	\]
	where $z_t$ and $u_t$ are continuous and bounded.
	Differentiation of \eqref{eq:first_first_order_cond_mapping} at $\alpha=0$ gives
	\begin{align*}
	0 
	&=
	\int -\tilde f_t + \tilde f_t \exp(h^\ast - c) \, dP =
	\int \tilde f_t \exp(h^\ast - c) \, dP
	\\
	&=
	\int z_t \exp \Big( \sum_{i = 1}^{t-1} f_i^\ast + g_i^\ast \Big) U_t \exp\Big( - c_{t-1} + \sum_{i = t}^N f_i^\ast + g_i^\ast - c_i + c_{i - 1} \Big)
	\, dP,
	\end{align*}
	where the first equality is due to optimality of $h^\ast$.
	As $z_t$ was arbitrary, this yields $P$-almost surely
	\begin{align*}
	\int U_t \exp\Big( - c_{t-1} + \sum_{i = t}^N f_i^\ast + g_i^\ast - c_i + c_{i - 1} \Big)
	\, dP_{t:N,t:N}^{X_{1:t-1}, Y_{1:t-1}} = 0,
	\end{align*}
	from which we derive, as $U_t$ was arbitrary, that $P$-almost surely
	\[
	K_t := \int \exp \Big( - c_{t-1} + \sum_{i = t}^N f_i^\ast + g_i^\ast - c_i + c_{i - 1} \Big) \, dP_{t+1:N,t:N}^{X_{1:t}, Y_{1:t-1}},
	\]
	is a measurable function of $x_{1:t-1}$ and $y_{1:t-1}$; in other words, $K_t$ is constant in $x_t$.
	
	We prove inductively the following claim:
	For $t = 1, \ldots, N$, we have \eqref{eq:first_order_cond_f_schrodinger}, and for $t = 1,\ldots, N -1$ we have \eqref{eq:first_order_cond_remark}.
	
	Let $t = N$, or $t = 2,\ldots, N-1$ such that $f_s^\ast = a_s - c_{s - 1}$ for $s = t + 1$.
	In both cases, we find thanks to \eqref{eq:first_order_cond_remark} that $P$-almost surely
	\[
	K_t = \exp( f_t^\ast ) \int \exp( g_t^\ast - c_t ) \, d\nu_{t}^{Y_{1:t-1}} = \exp( f_t^\ast - a_t).
	\]
	The next computation yields \eqref{eq:first_order_cond_f_schrodinger} for $t$, since
	\begin{align*}
	f_t^\ast &= f_t^\ast - \int f_t^\ast \, d\mu_t^{X_{1:t-1}} 
	\\
	&=
	\log(K_t) + a_t - \int \log(K_t) + a_t \, d\mu_t^{X_{1:t-1}} = a_t - c_{t-1}.
	\end{align*}
	Assume now that $f_t^\ast = a_t - c_{t - 1}$ for $t = 2,\ldots, N$, then \eqref{eq:first_order_cond_remark} yields that
	\[
	\int h^\ast - \exp(h^\ast - c) \, dP = \int f_1^\ast + g_1^\ast - \exp( f_1^\ast + g_1^\ast - c_1) \, d\mu_1 \otimes \nu_1.
	\]
	This is the same objective function as for normal entropic OT with cost function $c_1$, and thus the case $t=1$ of \eqref{eq:first_order_cond_f_schrodinger} is just the non-adapted Schrödinger equation. This completes the proof of \eqref{eq:first_order_cond_f_schrodinger}. In the bicausal case, the respective analogue for $g^*$ follows by symmetry. And for the causal case, \eqref{eq:sinkhorn_step_normal} for $g^*$ is quickly derived similar to the non-adapted Schrödinger equation, and thus omitted.

	The uniform bounds are satisfied by noting that the same bounds as given by Lemma \ref{lem:sinkhorn_step_bounds} are given for $f^*$ and $g^*$ and thus by working backwards in time, the claim follows.
\end{proof}

The next lemma is insightful in order to understand Sinkhorn iterations in their dual form.
Though the lemma is stated for causal projections, there are obvious analogues for classical and anticausal projections.
\begin{lemma}
	\label{lem:invarianceopti}
	Let $c, g \in L^\infty(\X \times \Y)$ and $f \in \mathcal{H}_c$, and $\pi^{(a)}, \pi^{(b)} \in \mathcal{P}(\X \times \Y)$ with densities
	\[
	\frac{d\pi^{(a)}}{d P} = \exp(g-c) ~ \text{ and } ~ \frac{d\pi^{(b)}}{d P} = \exp(g + f - c).
	\]
	Then
	\[
	\argmin_{\pi \in \Pi_c(\mu, *)} D_{\rm KL}(\pi, \pi^{(a)}) = \argmin_{\pi \in \Pi_c(\mu, *)} D_{\rm KL}(\pi, \pi^{(b)}).
	\]
\end{lemma}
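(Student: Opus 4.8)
The plan is to reduce the statement to the uniqueness part of the dual description of the causal projection, Lemma~\ref{lem:proj_dual}, after noting that $\pi^{(b)}$ is obtained from $\pi^{(a)}$ by tilting with the potential $f$, and that such a tilt is invisible to the projection onto $\Pi_c(\mu,\ast)$ because $f$ lies in the ``$\X$-side'' space $\mathcal S_{\X,\mu}$ — precisely the free direction of that constraint.

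First I would dispose of the elementary points. Since $c$, $g$ and $f$ are bounded, $\pi^{(a)}$ and $\pi^{(b)}$ have strictly positive bounded densities with respect to $P$, so $\pi^{(a)}$, $\pi^{(b)}$ and $P$ are mutually equivalent and $\tfrac{d\pi^{(b)}}{d\pi^{(a)}}=\exp(f)/Z$ with $Z=\int\exp(f)\,d\pi^{(a)}\in(0,\infty)$. Consequently, for every $\pi\in\mathcal P(\X\times\Y)$ with $\pi\ll P$,
\[
D_{\rm KL}(\pi,\pi^{(b)})=D_{\rm KL}(\pi,\pi^{(a)})-\int f\,d\pi+\log Z,
\]
and, $f$ being bounded, $D_{\rm KL}(\pi,\pi^{(a)})<\infty$ exactly when $D_{\rm KL}(\pi,\pi^{(b)})<\infty$; in particular the two problems over $\Pi_c(\mu,\ast)$ have the same effective domain, and by strict convexity of the relative entropy each has at most one minimiser.

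For the core of the proof I would apply Lemma~\ref{lem:proj_dual} twice. With reference measure $\pi^{(a)}$, whose $P$-density $\exp(g-c)$ has bounded measurable exponent, that lemma produces $\pi^{\ast}:=\argmin_{\pi\in\Pi_c(\mu,\ast)}D_{\rm KL}(\pi,\pi^{(a)})$ and shows $\tfrac{d\pi^{\ast}}{dP}=\exp(s_\X+g-c)$ for some $s_\X\in\mathcal S_{\X,\mu}$. Rewriting
\[
\frac{d\pi^{\ast}}{dP}=\exp\!\big((s_\X-f)+(g+f-c)\big),
\]
we have $g+f-c$ bounded measurable and $s_\X-f\in\mathcal S_{\X,\mu}$ since $\mathcal S_{\X,\mu}$ is a linear space containing $f$. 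Applying Lemma~\ref{lem:proj_dual} a second time, now with reference $\pi^{(b)}$ and potential $s_\X-f$, identifies the same $\pi^{\ast}$ as the unique minimiser of $D_{\rm KL}(\cdot,\pi^{(b)})$ over $\Pi_c(\mu,\ast)$, which is the claim.

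It is instructive — and is the one genuinely substantive ingredient, hence the main obstacle — to see the invariance directly at the level of $f$: the map $\pi\mapsto\int f\,d\pi$ is constant on $\Pi_c(\mu,\ast)$. Indeed, writing $f=\sum_{t=1}^N f_t$ with $f_1$ a function of $x_1$ and $f_t\in\mathcal A_{\X,\mu,t}$ for $t\ge2$, the very form of $\mathcal A_{\X,\mu,t}$ together with the identity $\pi_{t,0}^{X_{1:t-1},Y_{1:t-1}}=\mu_t^{X_{1:t-1}}$ — which holds for any causal $\pi$ whose first marginal is $\mu$, cf.\ the discussion following Definition~\ref{def:causality} — yields $\int f_t\,d\pi=0$ for $t\ge2$, while $\int f_1\,d\pi=\int f_1\,d\mu_1$; hence $\int f\,d\pi=\int f_1\,d\mu_1$ depends only on $\mu$ and $f$. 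Feeding this back into the displayed identity shows that $D_{\rm KL}(\cdot,\pi^{(b)})$ and $D_{\rm KL}(\cdot,\pi^{(a)})$ differ on $\Pi_c(\mu,\ast)$ by a constant independent of $\pi$, so they have the same minimiser. This is exactly the linear characterisation of the causality constraint used in the proof of Proposition~\ref{prop:duality}; everything else is routine bookkeeping with Radon--Nikodym derivatives.
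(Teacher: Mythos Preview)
Your direct argument in the final paragraph is exactly the paper's proof: observe that $D_{\rm KL}(\pi,\pi^{(a)})-D_{\rm KL}(\pi,\pi^{(b)})=\int f\,d\pi$ and show this is constant on $\Pi_c(\mu,\ast)$, whence the two minimisers coincide. The detour via two applications of Lemma~\ref{lem:proj_dual} is a valid alternative that reaches the same conclusion under the same hypothesis. (A minor point: since both $\pi^{(a)}$ and $\pi^{(b)}$ are assumed to be probability measures with the stated $P$-densities, $\tfrac{d\pi^{(b)}}{d\pi^{(a)}}=\exp(f)$ exactly and your normalising constant $Z$ equals $1$.)

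There is, however, a genuine gap shared by you and the paper. You repeatedly treat $f$ as an element of $\mathcal{S}_{\X,\mu}$ (``the $\X$-side space''), decompose it as $\sum_t f_t$ with $f_t\in\mathcal{A}_{\X,\mu,t}$, and use $s_\X-f\in\mathcal{S}_{\X,\mu}$. But the hypothesis is $f\in\mathcal{H}_c=\mathcal{S}_{\X,\mu}\oplus\mathcal{S}_\Y$, which allows an additive component $f_\Y(y)$. For such a component $\int f_\Y\,d\pi$ depends on the $\Y$-marginal of $\pi$, which is \emph{not} fixed in $\Pi_c(\mu,\ast)$; already for $N=1$, with $g=c$ (so $\pi^{(a)}=P$) and $f=f_\Y$ non-constant satisfying $\int e^{f_\Y}\,d\nu=1$, both $\pi^{(a)}$ and $\pi^{(b)}=\mu\otimes(e^{f_\Y}\nu)$ lie in $\Pi(\mu,\ast)=\Pi_c(\mu,\ast)$ and hence equal their own projections, yet $\pi^{(a)}\neq\pi^{(b)}$. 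The paper's one-line justification ``since $f\in\mathcal{H}_c$, $\int f\,d\pi$ is constant for all $\pi\in\Pi_c(\mu,\ast)$'' has the same defect. The lemma should be read with $f\in\mathcal{S}_{\X,\mu}$, which is precisely how it is applied after Lemma~\ref{lem:prop_sink_iter}; under that reading your argument is complete and matches the paper's.
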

\begin{proof}
	It suffices to show that $D_{\rm KL}(\pi, \pi^{(a)}) - D_{\rm KL}(\pi, \pi^{(b)})$ is constant for any $\pi \in \Pi_c(\mu, \ast)$ with
	$D_{\rm KL}(\pi, \pi^{(a)}) < \infty$ or $D_{\rm KL}(\pi, \pi^{(b)}) < \infty$.
	Since $f \in \mathcal H_c$, we have that $\int f \, d\pi$ is constant for all $\pi \in \Pi_c(\mu,\ast)$.
	Assume w.l.o.g.\ that the entropy of $\pi$ w.r.t.\ $\pi^{(a)}$ is finite, then we conclude with
	\begin{align*}
	D_{\rm KL}(\pi, \pi^{(a)}) - \int f \, d\pi 
	&=
	\int \log \Big( \frac{d\pi}{d\pi^{(a)}} \Big) - \log \Big( \frac{d\pi^{(b)}}{d\pi^{(a)}} \Big) \, d\pi
	\\
	&=
	\int \log \Big(\frac{d\pi}{d\pi^{(b)}}\Big) \, d\pi
	\\
	&=
	D_{\rm KL}(\pi, \pi^{(b)}). \qedhere
	\end{align*}
\end{proof}

In the following, we introduce notation for the dual steps for Sinkhorn's algorithm and a suitable normalization. Let $\pi^{(k)}, \pi^{(k+\frac{1}{2})}$ be the Sinkhorn iterations from Definition \ref{def:sinkhornsteps}. Further, denote by $\mathcal{H}_1 = \mathcal{S}_{\X, \mu}$ and $\mathcal{H}_2 \in \{\mathcal{S}_{\Y}, \mathcal{S}_{\Y, \nu}\}$ the corresponding dual sets (cf.~Section \ref{subsec:duality}) to the building blocks $\mathcal{Q}_1, \mathcal{Q}_2$ from Definition \ref{def:sinkhornsteps}.
We know by Lemma \ref{lem:proj_dual}, or equivalently Lemma \ref{lem:sinkhorn_step_bounds}, and its analogues that for $k \in \mathbb{N}$
\[
\frac{d\pi^{(k)}}{dP} = \exp(f^{(k)} + g^{(k)} - c)
\]
for some $f^{(k)} \in \mathcal{H}_1, g^{(k)} \in \mathcal{H}_2$. Further, by Lemma \ref{lem:invarianceopti}, the respective function $f^{(k)}$ only depends on the prior function $g^{(k-1)}$, but not on $f^{(k-1)}$, and similarly $g^{(k)}$ only depends on $f^{(k)}$, but not on $ g^{(k-1)}$. 
Thus, with this notation we have
\[
\frac{d\pi^{(k+\frac{1}{2})}}{dP} = \exp(f^{(k+1)} + g^{(k)} - c).
\]
We further denote by $f^*$ and $g^*$ the respective dual optimizers for $E_\bullet(\mu, \nu, c)$, and introduce the normalized versions
\begin{align*}
\tilde{f}^{(k)} &:= f^{(k)} - \int f^{(k)} \,dP, & \tilde{f}^* &= f^* - \int f^* \,dP \\
\tilde{g}^{(k)} &:= g^{(k)} + \int f^{(k)} \,dP, & \tilde{g}^* &= g^* + \int f^* \,dP.
\end{align*}


\begin{lemma}\,
	\label{lem:prop_sink_iter}
	The normalized Sinkhorn potentials satisfy the following.
	\begin{enumerate}[label=(\roman*)]
		\item \label{it:prop_sink_iter_1}
		There exists a constant $C_N$ only depending on $N$ such that \[
		\sup\{\|\tilde{f}^{(k)}\|_\infty \lor \|\tilde{g}^{(k)}\|_\infty : k \in \mathbb{N}\} \leq C_N \|c\|_\infty
		\]
		\item \label{it:prop_sink_iter_2}
		For any $k \in \mathbb N$, $\tilde f^{(k)} - \tilde f^\ast$ is orthogonal to $\tilde g^{(k)} - \tilde g^\ast$ in $L^2(P)$, i.e.,
		\[
		\|\tilde{f}^{(k)} + \tilde{g}^{(k)} - \tilde{f}^* - \tilde{g}^*\|_{L^2(P)}^2 = \|\tilde{f}^{(k)} - \tilde{f}^*\|_{L^2(P)}^2 + \|\tilde{g}^{(k)} - \tilde{g}^*\|_{L^2(P)}^2.
		\]
	\end{enumerate}
	\begin{proof}
		Part \ref{it:prop_sink_iter_1} follows from Lemma \ref{lem:sinkhorn_step_bounds} since one can inductively show that, the following bounds for $t = 2, \ldots, N$,
		\[
		\| g^{(k+1)}_t \|_\infty \le 2^{N + 1 - t} \| c \|_\infty
		\]
		imply the same bounds for $f^{(k)}_t$.
		We may argue mutatis mutandis for $g^{(k)}_t$.
		Note that
		\begin{align*}
		-\| c \|_\infty \le \int f^{(k)} + g^{(k)} \, dP \le \| c \|_\infty,
		\end{align*}
		where the first inequality follows by the maximizing property in Lemma \ref{lem:sinkhorn_step_bounds} and the last by weak duality.
		Finally, we have
		\[
		\left\| {f}_1^{(k)} - \int {f}^{(k)} \, dP \right\|_\infty \le 2^N \| c \|_\infty,
		\quad
		\left\| {g}_1^{(k)} + \int {f}^{(k)} \, dP \right\|_\infty
		\le (2^N + 1) \| c \|_\infty,
		\]
		which easily yields the claim.
		
		Regarding \ref{it:prop_sink_iter_2}, we have to show that
		\[
		\int (\tilde{f}^{(k)} - \tilde{f}^*) (\tilde{g}^{(k)} - \tilde{g}^*) \,dP = 0
		\]
		This can again be shown via backward induction. 
		Indeed, let 
		\[
		D_{f, t} := \tilde{f}^{(k)}_t - \tilde{f}^*_t, \quad
		D_{g, t} := \tilde{g}^{(k)}_t - \tilde{g}^*_t,
		\]
		where we use the same notation as in Lemma \ref{lem:schrodinger}.
		Note that, for $1 \le s \le t$, $D_{f,s}$ does not depend on $y_t$ and similarly $D_{g,s}$ does not depend on $x_t$, hence,
		\[
		\int D_{f,t} \Big( \sum_{s = 1} D_{g,s} \Big) \, dP 
		= 0 
		= \int \Big( \sum_{s = 1} D_{f,s} \Big) D_{g,t} \, dP.
		\]
		Then, for $t = 2,\ldots,N$, we get
		\begin{align*}
		&\int \Big(\sum_{s=1}^t D_{f, s}\Big) \Big( \sum_{s=1}^t D_{g, s}\Big) \,dP 
		\\
		&= \int \Big(\sum_{s=1}^{t-1} D_{f, s}\Big) D_{g, t} + D_{f, t} \Big(\sum_{s=1}^{t-1} D_{g, s}\Big) + D_{g, t} D_{f, t} + \Big(\sum_{s=1}^{t-1} D_{f, s}\Big) \Big(\sum_{s=1}^{t-1} D_{g, s}\Big) \,dP 
		\\
		&= \int \Big(\sum_{s=1}^{t-1} D_{f, s}\Big) \Big(\sum_{s=1}^{t-1} D_{g, s}\Big) \,dP.
		\end{align*}
		Therefore, we conclude with
		\[
		\int (\tilde{f}^{(k)} - \tilde{f}^*) (\tilde{g}^{(k)} - \tilde{g}^*) \,dP = \int (\tilde{f}^{(k)}_1 - \tilde{f}^*_1) (\tilde{g}^{(k)}_1 - \tilde{g}^*_1) \,dP,
		\]
		which vanishes by definition of $\tilde{f}_1^{(k)}$ and $\tilde{f}_1^*$.
	\end{proof}
\end{lemma}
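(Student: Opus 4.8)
The plan is to prove the two assertions separately; both rest on the explicit recursive form of the dual Sinkhorn steps (Lemma~\ref{lem:sinkhorn_step_bounds}, together with Lemma~\ref{lem:proj_dual}) and on the observation (Lemma~\ref{lem:invarianceopti}) that $f^{(k)}$ is obtained by a \emph{causal} step from $g^{(k-1)}$ alone, and $g^{(k)}$ by a \emph{normal} or \emph{anticausal} step from $f^{(k)}$ alone, so that the component-wise bounds of Lemma~\ref{lem:sinkhorn_step_bounds} apply verbatim along the iterations.

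For \ref{it:prop_sink_iter_1} I would first bound the conditionally centered components $f^{(k)}_t$ and $g^{(k)}_t$ with $t\ge 2$ by a $k$-independent, purely combinatorial backward induction in $t$. Setting $B_t:=\sup_{k}\big(\|f^{(k)}_t\|_\infty\vee\|g^{(k)}_t\|_\infty\big)$, the estimate $\|f^{(k)}_t\|_\infty\le 2\big(\sum_{s=t+1}^N\|g^{(k-1)}_s\|_\infty+\|c\|_\infty\big)$ and the symmetric one for $g^{(k)}_t$ yield $B_t\le 2\big(\sum_{s>t}B_s+\|c\|_\infty\big)$, which, solved from $B_N\le 2\|c\|_\infty$, gives $B_t\le c_{N,t}\|c\|_\infty$. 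It then remains to handle the time-$1$ components and the normalisation. Since each $f^{(k)}_t$, $t\ge 2$, has vanishing $\mu_t^{X_{1:t-1}}$-conditional mean, one has $\int f^{(k)}\,dP=\int f^{(k)}_1\,d\mu_1$, so $\tilde f^{(k)}=\big(f^{(k)}_1-\int f^{(k)}_1\,d\mu_1\big)+\sum_{t\ge 2}f^{(k)}_t$, and the first summand is controlled by the range bound for $f^{(k)}_1$ in Lemma~\ref{lem:sinkhorn_step_bounds}; likewise $\tilde g^{(k)}=\big(g^{(k)}_1-\int g^{(k)}_1\,d\nu_1\big)+\sum_{t\ge 2}g^{(k)}_t+\int(f^{(k)}+g^{(k)})\,dP$, and the scalar $\int(f^{(k)}+g^{(k)})\,dP$ is controlled by noting that, since $\exp(f^{(k)}+g^{(k)}-c)$ is a probability density with respect to $P$, it equals the dual Sinkhorn value at step $k$, which is nondecreasing along the algorithm, is bounded above by $E_\bullet(\mu,\nu,c)\le\int c\,dP\le\|c\|_\infty$ via weak duality, and is bounded below by its value at $\pi^{(0)}=P_c$, namely $-\log\int e^{-c}\,dP\ge-\|c\|_\infty$.

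For \ref{it:prop_sink_iter_2} I would write $\tilde f^{(k)}-\tilde f^\ast=\sum_t D_{f,t}$ with $D_{f,t}\in\mathcal{A}_{\X,\mu,t}$ and $\tilde g^{(k)}-\tilde g^\ast=\sum_t D_{g,t}$ with $D_{g,t}\in\mathcal{A}_{\Y,\nu,t}$, using that both $f$-potentials decompose into $\mathcal{A}_{\X,\mu,t}$-pieces and both $g$-potentials into $\mathcal{A}_{\Y,\nu,t}$-pieces (via Lemma~\ref{lem:schrodinger} and Remark~\ref{rem:decomp}), and expand the $L^2(P)$ inner product. The key elementary fact is that $D_{f,t}$ depends only on $(X_{1:t},Y_{1:t-1})$ with $\int D_{f,t}\,d\mu_t^{X_{1:t-1}}=0$, while $D_{g,t}$ depends only on $(X_{1:t-1},Y_{1:t})$ with $\int D_{g,t}\,d\nu_t^{Y_{1:t-1}}=0$; since $X$ and $Y$ are independent under $P=\mu\otimes\nu$, this forces $\int D_{f,t}\,\phi\,dP=0$ whenever $\phi$ is measurable with respect to $(X_{1:t-1},Y)$, and symmetrically $\int D_{g,t}\,\psi\,dP=0$ for $\psi$ measurable with respect to $(X,Y_{1:t-1})$. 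Feeding these into the telescoping identity $\int\big(\sum_{s\le t}D_{f,s}\big)\big(\sum_{s\le t}D_{g,s}\big)\,dP=\int\big(\sum_{s\le t-1}D_{f,s}\big)\big(\sum_{s\le t-1}D_{g,s}\big)\,dP$ — where each of the three cross terms, including $\int D_{f,t}D_{g,t}\,dP$, vanishes separately — collapses the inner product down to $\int D_{f,1}D_{g,1}\,dP=\big(\int D_{f,1}\,d\mu_1\big)\big(\int D_{g,1}\,d\nu_1\big)$, which is $0$ because the normalisation forces $\int\tilde f^{(k)}_1\,d\mu_1=\int\tilde f^\ast_1\,d\mu_1=0$.

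I expect the main obstacle to lie in \ref{it:prop_sink_iter_1}, namely the two-layer bookkeeping: the bounds of Lemma~\ref{lem:sinkhorn_step_bounds} control the centered pieces $f^{(k)}_t,g^{(k)}_t$ ($t\ge 2$) and the oscillation of the time-$1$ pieces, but \emph{not} the constant level of $f^{(k)}$ and $g^{(k)}$; that level is pinned down only through the normalisation together with weak duality and monotonicity of the algorithm, and one must track carefully which constant is absorbed where (into $f^{(k)}_1$, into $g^{(k)}_1$, or into $\int(f^{(k)}+g^{(k)})\,dP$). Part \ref{it:prop_sink_iter_2} is then a bilinear version of the causality decomposition and should go through routinely once the conditional-centering statements above are isolated.
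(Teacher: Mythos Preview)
Your proposal is correct and follows essentially the same approach as the paper: for \ref{it:prop_sink_iter_1} you run the backward induction on the component bounds from Lemma~\ref{lem:sinkhorn_step_bounds}, then pin down the remaining constant via weak duality and monotonicity of the dual values, and for \ref{it:prop_sink_iter_2} you use the same telescoping on $\int\big(\sum_{s\le t}D_{f,s}\big)\big(\sum_{s\le t}D_{g,s}\big)\,dP$ together with the conditional-centering of $D_{f,t}$ in $x_t$ and of $D_{g,t}$ in $y_t$, reducing to the $t=1$ term which vanishes by the normalization. Your bookkeeping is slightly more explicit than the paper's (you spell out the factorization $\int D_{f,1}D_{g,1}\,dP=\big(\int D_{f,1}\,d\mu_1\big)\big(\int D_{g,1}\,d\nu_1\big)$ and the identification of $\int(f^{(k)}+g^{(k)})\,dP$ with the dual value $E^{(k)}$), but the argument is the same.
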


To state the following theorem, denote by $E^{(k)}$ the objective value of Sinkhorn after the $k$-th step, i.e., 
\[
E^{(k)} := \int c \, d\pi^{(k)} + D_{\rm KL}(\pi^{(k)}, P) = \int f^{(k)} + g^{(k)} - \exp(f^{(k)} + g^{(k)} - c) \,dP +1,
\]
which obviously satisfies $E^{(k)} \leq E(\mu, \nu, c)$.
The following result is based on the idea of Carlier \cite{carlier2021linear} for the classical Sinkhorn's algorithm.
\begin{theorem}[Linear convergence of Sinkhorn's algorithm]
	\label{thm:sink_lin_conv}
	There exist constants $\rho \in (0, 1)$ and $C > 0$ only depending on $\|c\|_\infty$ and $N$ such that
	\begin{align*}
	\big(E_\bullet(\mu, \nu, c) - E^{(k)}\big) &\leq \rho^k \, \Big(E_\bullet(\mu, \nu, c) - E^{(0)}\big) \\
	\|\tilde{f}^{(k)} - \tilde{f}^*\|_{L^2(P)}^2   + \|\tilde{g}^{(k)} - \tilde{g}^*\|_{L^2(P)}^2 &\leq C \rho^k \, \Big(E_\bullet(\mu, \nu, c) - E^{(0)}\big)
	\end{align*}
	\begin{proof}
		We will use strong convexity of the exponential function, i.e., we have for $a, b \in [-\alpha, \alpha]$, and $\alpha > 0$
		\begin{equation}
		\label{eq:strong convexity}
		\exp(b) - \exp(a) \geq \exp(a) (b-a) + \frac{\exp(-\alpha)}{2} (b-a)^2.
		\end{equation}
		In our case, as all functions are bounded (cf.~Lemma \ref{lem:prop_sink_iter} \ref{it:prop_sink_iter_1}), we will apply this inequality with $\alpha = 2 C_N \|c\|_\infty$, and denote by $\sigma := \exp(-\alpha)$.
		
		To ease notation, write $E = E_\bullet(\mu, \nu, c)$. 
		Using \eqref{eq:strong convexity}, we obtain
		\begin{align}
		\begin{split}
		E^{(k)} - E &\geq \int (1-\exp(\tilde{f}^{(k)} + \tilde{g}^{(k)} - c)) (\tilde{f}^{(k)} + \tilde{g}^{(k)} - \tilde{f}^* - \tilde{g}^*) \,dP  
		\\
		&\phantom{=}+ \frac{\sigma}{2} \|\tilde{f}^{(k)} + \tilde{g}^{(k)} - \tilde{f}^* - \tilde{g}^*\|_{L^2{P}}^2 \\
		&= \int (1-\exp(\tilde{f}^{(k)} + \tilde{g}^{(k)} - c)) (\tilde{f}^{(k)} - \tilde{f}^*) \,dP \\
		&\phantom{=}+ \int (1-\exp(\tilde{f}^{(k)} + \tilde{g}^{(k)} - c)) (\tilde{g}^{(k)} - \tilde{g}^*) \,dP \\
		&\phantom{=}+ \frac{\sigma}{2} (\|\tilde{f}^{(k)} - \tilde{f}^*\|_{L^2(P)}^2 + \|\tilde{g}^{(k)} - \tilde{g}^*\|_{L^2(P)}^2).
		\end{split}
		\label{eq:strong_conv}
		\end{align}
		where the equality uses Lemma \ref{lem:prop_sink_iter} \ref{it:prop_sink_iter_2}. 
		First, note that 
		\begin{equation}
		\label{eq:vanishing term}
		\int (1-\exp(\tilde{f}^{(k)} + \tilde{g}^{(k)} - c)) (\tilde{g}^{(k)} - \tilde{g}^*) \,dP = 0,
		\end{equation}
		since $\tilde{g}^{(k)}$ is chosen in a way so that $\exp(\tilde{f}^{(k)} + \tilde{g}^{(k)} - c)$ is the density of a measure contained in $\Pi_{ac}(*, \nu)$ (respectively $\Pi(*, \nu)$) w.r.t.\ $P$. 
		
		Next, we treat the term $\int (1-\exp(\tilde{f}^{(k)} + \tilde{g}^{(k)} - c)) (\tilde{f}^{(k)} - \tilde{f}^*) \,dP$ in the above.
		Let 
		\[
		A_{k,k} := 1-\exp(\tilde{f}^{(k)} + \tilde{g}^{(k)} - c),
		\quad 
		A_{k+1, k} := 1-\exp(\tilde{f}^{(k+1)} + \tilde{g}^{(k)} - c).
		\]
		Again, as $\exp(\tilde{f}^{(k+1)} + \tilde{g}^{(k)} - c)$ is up to a normalizing factor $\beta > 0$ the density of $\pi^{(k+\frac{1}{2})} \in \Pi_c(\mu, \ast)$ w.r.t.\ $P$, we find that
		\[
		\int A_{k+1, k} (\tilde{f}^{(k)} - \tilde{f}^*) \,dP = \int (\tilde{f}^{(k)} - \tilde{f}^*) \,dP - \beta \int (\tilde{f}^{(k)} - \tilde{f}^*) \,d\pi^{(k+\frac{1}{2})}
		\]
		vanishes, because both terms on the right-hand side are zero by definition of $\tilde{f}^{(k)}$ and $\tilde{f}^*$. 
		Hence, we can apply Young's inequality in order to derive
		\begin{align}
		\label{eq:bound by young}
		\begin{split}
		\int (1-\exp(\tilde{f}^{(k)} + \tilde{g}^{(k)} - c)) (\tilde{f}^{(k)} - \tilde{f}^*) \,dP 
		= \int (A_{k, k} - A_{k+1, k}) (\tilde{f}^{(k)} - \tilde{f}^*) \,dP \\
		\geq - \frac{1}{2\sigma} \|A_{k, k} - A_{k+1, k}\|_{L^2(P)}^2 - \frac{\sigma}{2} \|\tilde{f}^{(k)} - \tilde{f}^*\|_{L^2(P)}^2.
		\end{split}
		\end{align}
		By combining \eqref{eq:strong_conv}, \eqref{eq:vanishing term} and \eqref{eq:bound by young}, we obtain
		\begin{align*}
		E - E^{(k)} 
		&\leq \frac{1}{2\sigma} \| A_{k,k} - A_{k+1,k} \|_{L^2(P)}^2 + \frac{\sigma}{2} \| \tilde g^{(k)} - \tilde g^\ast \|_{L^2(P)}^2
		\\
		&\le \frac{1}{2\sigma} \|A_{k, k} - A_{k+1, k}\|_{L^2(P)}^2 
		\\
		&\leq \frac{ 
			1}{2\sigma^3} \|\tilde{f}^{(k+1)} - \tilde{f}^{(k)}\|_{L^2(P)}^2,
		\end{align*}
		where the last inequality holds due to the Lipschitz property of the exponential restricted to $[-\alpha,\alpha]$ with constant $1 / \sigma$.
		Finally, completely analogously to \eqref{eq:strong_conv} we get (where we adopt the notation $E^{(k+\frac{1}{2})} = \int \tilde{f}^{(k+1)} + \tilde{g}^{(k)} - \exp(\tilde{f}^{(k+1)} + \tilde{g}^{(k)} -c) \,dP$)
		\begin{align*}
		E^{(k+1)} - E^{(k)} 
		&= 
		\big(E^{(k+1)} - E^{(k+\frac{1}{2})}\big) 
		+\big(E^{(k+\frac{1}{2})} - E^{(k)}\big) 
		\\ 
		&\geq \frac{\sigma}{2}
		\Big( \|\tilde{f}^{(k+1)} - \tilde{f}^{(k)}\|_{L^2(P)}^2 + \| \tilde{g}^{(k+1)} - \tilde{g}^{(k)}\|_{L^2(P)}^2
		\Big),
		\end{align*}
		hence, the value $E^{(k)}$ is increasing in $k$ and
		\[
		E - E^{(k)} 
		\leq \frac{1}{2\sigma^3} 
		\|\tilde{f}^{(k+1)} - \tilde{f}^{(k)}\|_{L^2(P)}^2 
		\leq \frac{1}{\sigma^4} \big( E^{(k+1)} - E^{(k)} \big).
		\] 
		Then choosing $\rho = 1 - \sigma^4$ yields
		\[
		E - E^{(k + 1)} \le \rho \big( E - E^{(k)} \big),
		\]
		from where we easily conclude the first claim.
		The second claim follows by
		\[
		E - E^{(k)} \geq \frac{\sigma}{2} \big(\|\tilde{f}^{(k)} - \tilde{f}^*\|_{L^2(P)}^2 + \|\tilde{g}^{(k)} - \tilde{g}^*\|_{L^2(P)}^2\big),
		\]
		which can be shown analogously to \eqref{eq:strong_conv} and the subsequent line of arguments.
	\end{proof}
\end{theorem}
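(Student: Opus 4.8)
The plan is to work with the dual objective $\Phi(h) := \int h - \exp(h-c)\,dP + 1$, which is concave, and to observe that along the Sinkhorn iteration it is in fact \emph{uniformly strongly concave}. By Lemma~\ref{lem:prop_sink_iter}\ref{it:prop_sink_iter_1} (and Lemma~\ref{lem:schrodinger}) there is a constant $\alpha$, depending only on $\|c\|_\infty$ and $N$, such that $h - c$ takes values in $[-\alpha,\alpha]$ for every $h \in \{\tilde f^{(k)} + \tilde g^{(k)},\, \tilde f^{(k+1)}+\tilde g^{(k)},\, \tilde f^\ast + \tilde g^\ast : k\}$. Setting $\sigma := e^{-\alpha}$ and using $\exp(b) - \exp(a) \ge \exp(a)(b-a) + \tfrac{\sigma}{2}(b-a)^2$ for $a,b \in [-\alpha,\alpha]$, with $E := E_\bullet(\mu,\nu,c)$, $h^{(k)} := \tilde f^{(k)} + \tilde g^{(k)}$, $h^\ast := \tilde f^\ast + \tilde g^\ast$, integration yields
\[
E^{(k)} - E \;\ge\; \int \bigl(1-\exp(h^{(k)}-c)\bigr)\bigl(h^{(k)}-h^\ast\bigr)\,dP + \frac{\sigma}{2}\,\|h^{(k)}-h^\ast\|_{L^2(P)}^2 .
\]
By Lemma~\ref{lem:prop_sink_iter}\ref{it:prop_sink_iter_2} the last term equals $\tfrac{\sigma}{2}(\|\tilde f^{(k)}-\tilde f^\ast\|_{L^2(P)}^2 + \|\tilde g^{(k)}-\tilde g^\ast\|_{L^2(P)}^2)$, and I would split the linear term accordingly into an $\tilde f$-part and a $\tilde g$-part.

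The $\tilde g$-part $\int(1-\exp(h^{(k)}-c))(\tilde g^{(k)}-\tilde g^\ast)\,dP$ vanishes, because $\exp(h^{(k)}-c)$ is the $P$-density of $\pi^{(k)}\in\mathcal Q_2$, while $\tilde g^{(k)}-\tilde g^\ast \in \mathcal H_2$ integrates identically against every element of $\mathcal Q_2$ and against $P$ (as $P \in \mathcal Q_2$). For the $\tilde f$-part I would introduce, following the idea of Carlier, the genuine density $A_{k,k} := 1-\exp(\tilde f^{(k)}+\tilde g^{(k)}-c)$ and the \emph{mixed} density $A_{k+1,k} := 1-\exp(\tilde f^{(k+1)}+\tilde g^{(k)}-c)$; since $\exp(\tilde f^{(k+1)}+\tilde g^{(k)}-c) = \beta\,\tfrac{d\pi^{(k+1/2)}}{dP}$ for some $\beta>0$ with $\pi^{(k+1/2)}\in\Pi_c(\mu,\ast)$, and since the $P$-centered function $\tilde f^{(k)}-\tilde f^\ast \in \mathcal S_{\X,\mu}$ integrates to the same value $0$ against $\pi^{(k+1/2)}$ (which has $\X$-marginal $\mu$) and against $P$, we get $\int A_{k+1,k}(\tilde f^{(k)}-\tilde f^\ast)\,dP = 0$. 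Hence $\int A_{k,k}(\tilde f^{(k)}-\tilde f^\ast)\,dP = \int(A_{k,k}-A_{k+1,k})(\tilde f^{(k)}-\tilde f^\ast)\,dP$, and Young's inequality bounds this below by $-\tfrac1{2\sigma}\|A_{k,k}-A_{k+1,k}\|_{L^2(P)}^2 - \tfrac{\sigma}{2}\|\tilde f^{(k)}-\tilde f^\ast\|_{L^2(P)}^2$, the second summand cancelling exactly the corresponding quadratic term above. Together with the $\tfrac1\sigma$-Lipschitz property of $\exp$ on $[-\alpha,\alpha]$ this gives
\[
E - E^{(k)} \;\le\; \frac{1}{2\sigma}\,\|A_{k,k}-A_{k+1,k}\|_{L^2(P)}^2 \;\le\; \frac{1}{2\sigma^{3}}\,\|\tilde f^{(k+1)}-\tilde f^{(k)}\|_{L^2(P)}^2 .
\]

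It then remains to convert the one-step displacement into the one-step gain. Applying the same strong-convexity inequality to the two half-steps separately, and using that the $f$-update and the $g$-update each maximize the relevant block of the dual objective, I would obtain $E^{(k+1)}-E^{(k)} \ge \tfrac{\sigma}{2}\bigl(\|\tilde f^{(k+1)}-\tilde f^{(k)}\|_{L^2(P)}^2 + \|\tilde g^{(k+1)}-\tilde g^{(k)}\|_{L^2(P)}^2\bigr)$; in particular $E^{(k)}$ is nondecreasing. Combining, $E - E^{(k)} \le \tfrac{1}{\sigma^{4}}\bigl(E^{(k+1)}-E^{(k)}\bigr) = \tfrac{1}{\sigma^{4}}\bigl((E-E^{(k)})-(E-E^{(k+1)})\bigr)$, so $E - E^{(k+1)} \le (1-\sigma^{4})(E-E^{(k)})$; setting $\rho := 1-\sigma^{4}\in(0,1)$ and iterating proves the first inequality. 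For the second, I would invoke first-order optimality of $h^\ast$ (its ``gradient'' $1-\exp(h^\ast-c)$ is $P$-orthogonal to $\mathcal H_\bullet \ni h^{(k)}-h^\ast$) in the strong-convexity inequality evaluated at $h^\ast$, giving the clean bound $E-E^{(k)} \ge \tfrac{\sigma}{2}\bigl(\|\tilde f^{(k)}-\tilde f^\ast\|_{L^2(P)}^2 + \|\tilde g^{(k)}-\tilde g^\ast\|_{L^2(P)}^2\bigr)$, whence the claim with $C = 2/\sigma$. The main obstacle is precisely the telescoping: naive strong concavity only controls $E-E^{(k)}$ by $\|h^{(k)}-h^\ast\|$, and the insertion of the mixed density $A_{k+1,k}$ — admissible only because $\pi^{(k+1/2)}$ carries the correct first marginal $\mu$ — is what turns this into a bound involving the one-step increment; one also has to make sure the constant $\sigma$ is uniform in $k$, which is exactly what the a priori $L^\infty$ bounds of Lemmas~\ref{lem:prop_sink_iter}\ref{it:prop_sink_iter_1} and~\ref{lem:schrodinger} deliver.
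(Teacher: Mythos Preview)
Your proposal is correct and follows essentially the same route as the paper: strong concavity of the dual functional on the uniformly bounded range (Lemma~\ref{lem:prop_sink_iter}\ref{it:prop_sink_iter_1}), the orthogonality splitting (Lemma~\ref{lem:prop_sink_iter}\ref{it:prop_sink_iter_2}), vanishing of the $\tilde g$-linear term via $\pi^{(k)}\in\mathcal Q_2$, Carlier's mixed-density trick $A_{k+1,k}$ together with Young's inequality for the $\tilde f$-linear term, the Lipschitz bound on $\exp$, the half-step strong-concavity estimate for the one-step gain, and finally first-order optimality of $h^\ast$ for the second inequality. The resulting constants $\rho=1-\sigma^4$ and $C=2/\sigma$ match the paper's.
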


\begin{remark}[Sinkhorn's algorithm in backward induction]\label{rem:sink_back}
	We shortly discuss the use of Sinkhorn's algorithm to (approximately) solve the optimal transport problems in the backward induction of bicausal transport, as done in \cite{pichler2021nested}.
	There are interesting connections of this utilization of Sinkhorn's algorithm compared to the adapted version of Sinkhorn's algorithm studied in this paper. 
	First, both are (part of) algorithms to solve the entropic transport problem $E_{bc}(\mu, \nu, c)$.
	
	Concerning the algorithm proposed in the present paper, one may regard the Sinkhorn steps as the ``outer algorithm'', which operate on joint couplings for the whole path space. 
	Each step within the Sinkhorn algorithm, that is the projection onto $\Pi_c(\mu, *)$ or $\Pi_{ac}(*, \nu)$, respectively, are calculated using a backward induction, see Lemma \ref{lem:proj_dual}. 
	Thus, the backward induction can be seen as the ``inner algorithm''.
	
	In contrast, concerning the algorithm proposed in  \cite{pichler2021nested}, one may regard the backward induction therein as the ``outer algorithm'', whereas Sinkhorn's algorithm is used to solve the respective steps of the backward induction. 
	Thus, Sinkhorn's algorithm takes the role of the ``inner algorithm''.
	
	There are theoretical arguments for both utilizations of Sinkhorn's algorithm. First, we showed that the adapted version of Sinkhorn's algorithm converges, even linearly, while the backward induction steps (the ``inner algorithm'') can -- at least for discrete spaces -- be calculated precisely. Thus, the only errors resulting from the adapted version of Sinkhorn's algorithm are happening in the outer loop.
	If convergence has not been reached yet, with this structure the algorithm can simply continue iterating in order to achieve convergence.
	
	On the other hand, using Sinkhorn's algorithm to calculate the steps of the backward induction as in \cite{pichler2021nested} introduces an error in each step. 
	Since the value of each time step $t+1$ is used as an input to the next step $t$, this error propagates.
	Even though this error can be controlled, because entropic optimal transport is stable with respect to perturbations in the cost function, see, e.g., \cite{eckstein2021quantitative, keriven2022entropic}, one may intuitively wish to avoid such propagating errors.
	
	It is however also plausible that using Sinkhorn's algorithm to calculate the steps of the backward induction as in \cite{pichler2021nested} improves the speed of convergence compared to the adapted version of Sinkhorn's algorithm. Indeed, with the adapted version of Sinkhorn's algorithm, to calculate the transition kernels for time $t$, one utilizes the current sub-optimal solution of the later time steps $t+1, \dots, N$, since these are taken from the same iteration step. However, using backward induction as the outer algorithm, the later time steps $t+1, \dots, N$ have already reached their convergence criteria and can thus be regarded as more accurate inputs to the current step $t$.
\end{remark}

\subsection{Numerical example: comparison of algorithms} \label{subsec:numalgo}
To give a basic idea of the practical applicability of the introduced version of Sinkhorn's algorithm, this section reports numerical results, where the focus is on both runtime and accuracy. The code to reproduce the experiments is available at \url{https://github.com/stephaneckstein/aotnumerics}. 

The marginals $\mu$ and $\nu$ will be randomly generated Markovian trees on $\mathbb{R}^N$, $N = 3$, with width $W$ and number of branches $n_b$ (cf.~\cite[Chapter 5]{pichler2021nested}), meaning that the transition kernels $\mu_t^{x_{t-1}}$ will be supported on at most $n_B$ integer points in the range $[x_{t-1} - W, x_{t-1} + W]$. We choose two cost functions $c_1$ and $c_2$ given by
\begin{align*}
c_1(x, y) = \sum_{t=1}^N (x_t - y_t)^2 / (2W)^2,\\
c_2(x, y) = \sum_{t=1}^N \sin(x_t y_t) + |x_t-y_t|/W,
\end{align*}
where the normalization by $W$ is used so that the supremum norm of the cost function on the support of the considered measures is around one, and thus the regularization parameter $\varepsilon$ can be reasonably chosen within the same range for both cost functions.

\begin{table}
\caption{Comparison of different algorithms to compute causal and bicausal optimal transport problems. The table reports runtimes in seconds (and in brackets relative errors for regularized problems) to calculate $V_\bullet(\mu, \nu, c_i)$ (respectively $E_\bullet^\varepsilon(\mu, \nu, c_i)$). The marginals are randomly chosen Markovian trees with three time steps and width $n_b$. The reported relative errors are $(\int c_i d\hat{\pi} - \int c_i d\pi^*)/(\int c_i \,d\mu \otimes \nu - \int c_i \,d\pi^*)$, where $\hat\pi$ is the approximate optimizer for the regularized problem, and $\pi^*$ is the real optimizer.  All values are averaged over 10 runs with different randomly generated marginals. The reported runtimes arise from running the programs on a single $3.1$GHz CPU. Regarding algorithms, LP implements the formulation of Lemma \ref{lem:discretization}. DPP refers to backward induction, either exact (LP) or using Sinkhorn's algorithm ($\varepsilon=0.01$) to solve the OT problems within the backward induction. Sinkhorn refers to the algorithm introduced in Section \ref{sec:sinkhorn}. LP for $n_b=50$ did not converge in under 1000 seconds.}

	\begin{tabular}{l l l l l l l}
	\toprule
	 &  & LP & 
	 \begin{tabular}{@{}l@{}}DPP \\ (LP)\end{tabular}  & \begin{tabular}{@{}l@{}}DPP \\ $(\varepsilon=0.01$)\end{tabular}  & \begin{tabular}{@{}l@{}}Sinkhorn \\ ($\varepsilon=0.1$)\end{tabular} & \begin{tabular}{@{}l@{}}Sinkhorn \\ ($\varepsilon=0.01$)\end{tabular}    \\\midrule
  	& \multicolumn{6}{c}{cost function $c_1$} \\
  	$\bullet$ & $n_b$ &  &  &  &  &  \\
	$c$ & 10 & 0.97 & - & - & 0.08 (12.7\%) & 0.74 (0.48\%)   \\
	$c$ & 25 & 107.10 & - & - & 0.47 (14.43\%) & 4.31 (1.13\%) \\
	$c$ & 50 & - & - & - & 2.34 (-) & 19.37 (-) \\
 	$bc$ & 10 & 1.61 & 0.60 & 1.08 (0.81\%) & 0.08 (13.82\%)  & 0.97 (0.46\%) \\
	$bc$ & 25 & 205.10 & 24.09 & 11.55 (1.51\%) & 0.50 (16.25\%) & 4.78 (1.24\%) \\
	$bc$ & 50 & - & 308.55 & 79.72 (1.73\%) & 2.21 (16.10\%) & 18.60 (1.56\%) \\
    \vspace{0.2cm} \\ 
	& \multicolumn{6}{c}{cost function $c_2$} \\
  	$\bullet$ & $n_b$ &  &  &  &  &  \\
	$c$ & 10 & 1.08 & - & - & 0.22 (0.99\%) & 3.02 (0.07\%)  \\
	$c$ & 25 & 106.86 & - & - & 1.49 (1.70\%) & 18.13 (0.08\%) \\
	$c$ & 50 & - & - & - & 9.12 (-) & 105.01 (-) \\
 	$bc$ & 10 & 1.55 & 0.60 & 2.44 (0.06\%) & 0.27 (0.96\%) & 4.59 (0.10\%)  \\ 
	$bc$ & 25 & 197.23 & 21.61 & 23.86 (0.03\%) & 1.47 (1.87\%) & 22.09 (0.06\%) \\
	$bc$ & 50 & - & 315.70 & 108.97 (0.02\%) & 8.74 (2.37\%) & 105.86 (0.08\%) \\
\vspace{0.2cm} \\\bottomrule
\end{tabular}
\label{table:t1}
\end{table}

\begin{table}
\caption{Runtimes and relative errors (in brackets) for calculation of $V_{bc}(\mu, \nu, c_2)$. Reported values are averages over 10 (for $n_b=10, 25, 50$) respectively 3 (for $n_b=75, 100$) sample runs.}
	\begin{tabular}{l l l l l l}
	\toprule
  	$n_b$ & 10 & 25 & 50 & 75 & 100 \\
	DPP (LP) & 0.62 & 21.61 & 315.70 & 1285.48 & 3784.37   \\ 
	Sinkhorn ($\varepsilon = 0.01$) & 4.59 (0.10\%) & 22.09 (0.06\%) & 105.86 (0.08\%) & 230.40 (0.07\%) & 500.14 (0.04\%) \\\bottomrule
    \end{tabular}
    \label{table:t2}
\end{table}
The results are reported in Table \ref{table:t1}. Therein, we compare different algorithms, both exact (LP as in Lemma \ref{lem:discretization}, and backward induction for the bicausal problem) and approximate (causal and bicausal version of Sinkhorn's algorithm, and backward induction using Sinkhorn's algorithm as the inner solver). Linear programs were implemented using Gurobi \cite{gurobi}. Sinkhorn's algorithm for the backward induction was implemented using the PythonOT package \cite{flamary2021pot}. Causal and bicausal versions of Sinkhorn's algorithm as presented in this section were implemented from scratch in Python.

Table \ref{table:t1} shows that the runtime of the direct LP implementation using Lemma \ref{lem:discretization} increases quickly for increasing $n_b$, but we emphasize that it is the only method to produce exact values for the causal problem. Causal and bicausal version of Sinkhorn's algorithm give accurate results (for $\varepsilon=0.01$) in comparable times to methods based on backward induction for bicausal problems, and noticably also much faster times than the direct LP implementation for the causal problem. Further, using $\varepsilon = 0.1$ yields quick approximate solutions, even for large values of $n_b$. 

Comparing the columns DPP ($\varepsilon = 0.01$) and Sinkhorn ($\varepsilon = 0.01$), we note that the same theoretical problems are implemented, and the differences in relative error results purely from numerical inaccuracies and differing stopping criteria, cf.~Remark \ref{rem:sink_back}. We emphasize that both methods start to be significantly faster than the exact DPP implementation for $n_b=50$. We give additional values showing the extrapolation of this trend in Table \ref{table:t2}. Therein, we see that for even larger values of $n_b$, the Sinkhorn algorithm yields increasingly larger benefits in runtime compared to the exact backward induction method. Thus, Sinkhorn's algorithm has particular benefits if the actual optimal transport problems occurring in the backward induction are large ($n_b=50, 75, 100$). If however all transition kernels are supported on a small number of points ($n_b=10, 25$), there are no benefits in runtime compared to an exact backward induction using linear programming.

To conclude, numerically the introduced versions of Sinkhorn's algorithm give comparable results to state of the art methods based on backward induction for the considered bicausal problems, while having the benefit of also being applicable to causal problems. In both cases (causal and bicausal), the runtime of Sinkhorn's algorithm is drastically faster than a direct LP implementation even for moderately sized problems.

\begin{acks}[Acknowledgments]
	The authors are grateful to two anonymous referees for their insightful remarks and constructive feedback. SE thanks the Erwin Schr\"{o}dinger Institute for Mathematics and Physics, University of Vienna, where major parts of this project were completed.
\end{acks}

\appendix
\section{Topological aspects of AOT}


%
%

\begin{lemma}
	\label{lem:refined topology}
	Let $\X$ be a Polish space with topology $\tau^\X$, $(\mathcal Y_n)_{n \in \mathbb N}$ be a sequence of Polish spaces,
	and $(K_n)_{n \in \mathbb N}$ be a family of measurable kernels $K_n \colon \mathcal X \to \mathcal P(\mathcal Y_n)$.
	Then there exists a Polish topology $\tau \supseteq \tau^\mathcal X$ on $\X$ such that the Borel-$\sigma$-algebras generated by $\tau$ and $\tau^\mathcal X$ coincide, and, for $n \in \mathbb N$,
	\begin{align}
	\label{eq:lem_continuity of kernels}
	\begin{split}
	(\mathcal X, \tau) &\to \mathcal P(\mathcal Y_n), \\
	x &\mapsto K_n(x)
	\end{split}
	\end{align}
	is continuous.
	In particular, we have that
	\begin{align*}
	\left\{ f(x) = \int g(x,y) \, K_n(x,dy) \colon g \in C_b(\mathcal X \times \mathcal Y_n) \right\} 
	\subseteq
	C_b((\mathcal X, \tau)).
	\end{align*}
\end{lemma}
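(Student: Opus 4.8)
The statement to prove is Lemma \ref{lem:refined topology}: given a Polish space $\mathcal X$ and countably many measurable kernels $K_n \colon \mathcal X \to \mathcal P(\mathcal Y_n)$, we want a finer Polish topology $\tau$ with the same Borel sets under which each $x \mapsto K_n(x)$ becomes continuous. The natural approach is the standard ``change of topology'' technique for measurable maps into Polish spaces (as in Kechris' descriptive set theory text, or Srivastava): a countable family of Borel maps into second countable spaces can be made continuous by refining the topology, provided one controls that the Borel structure is preserved.

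First I would reduce to a single kernel. Since $\mathcal P(\mathcal Y_n)$ is itself Polish (weak topology on a Polish space), we may form the product $\mathcal Z := \prod_{n} \mathcal P(\mathcal Y_n)$, which is Polish, and the single map $K \colon \mathcal X \to \mathcal Z$, $x \mapsto (K_n(x))_{n}$. This $K$ is Borel measurable because each coordinate $K_n$ is, and the Borel $\sigma$-algebra of a countable product is the product $\sigma$-algebra. So it suffices to handle one Borel map $K \colon \mathcal X \to \mathcal Z$ with $\mathcal Z$ Polish. Second, I would invoke the classical fact: for a Borel map $K$ between Polish spaces, there is a finer Polish topology $\tau$ on $\mathcal X$, with the same Borel $\sigma$-algebra, making $K$ continuous. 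The construction: fix a countable base $(V_j)_j$ of $\mathcal Z$; each $K^{-1}(V_j)$ is Borel in $(\mathcal X, \tau^{\mathcal X})$; by the Lusin--Souslin type result that any Borel subset of a Polish space can be made clopen by a refinement that stays Polish and Borel-equivalent (iterating: refine for one Borel set at a time, then take the topology generated by countably many such refinements, which remains Polish and Borel-equivalent by the standard lemma that a countable join of Polish topologies, each Borel-equivalent to the original, is again Polish and Borel-equivalent), one obtains $\tau \supseteq \tau^{\mathcal X}$ in which every $K^{-1}(V_j)$ is open. Then $K \colon (\mathcal X,\tau) \to \mathcal Z$ is continuous, hence so is each $K_n$, and $\tau$ has the same Borel sets as $\tau^{\mathcal X}$ by construction.

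For the ``in particular'' clause: let $g \in C_b(\mathcal X \times \mathcal Y_n)$ and set $f(x) = \int g(x,y)\,K_n(x,dy)$. Boundedness of $f$ by $\|g\|_\infty$ is immediate. For continuity under $\tau$, take $x_k \to x$ in $(\mathcal X,\tau)$; then $K_n(x_k) \to K_n(x)$ weakly in $\mathcal P(\mathcal Y_n)$ by the continuity just established. Since $g$ is continuous and bounded on $\mathcal X \times \mathcal Y_n$ and $x_k \to x$ also in $\tau^{\mathcal X}$ (as $\tau \supseteq \tau^{\mathcal X}$), the functions $y \mapsto g(x_k, y)$ converge uniformly on compacts to $y \mapsto g(x,y)$ and are uniformly bounded; combined with weak convergence of $K_n(x_k)$ — invoking a generalized portmanteau / continuous mapping argument, or simply writing $\int g(x_k,\cdot)\,dK_n(x_k) - \int g(x,\cdot)\,dK_n(x) = \int (g(x_k,\cdot) - g(x,\cdot))\,dK_n(x_k) + \int g(x,\cdot)\,d(K_n(x_k)-K_n(x))$ and bounding the first term using tightness of $\{K_n(x_k)\}$ plus equicontinuity of $g$ — we get $f(x_k) \to f(x)$.

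The main obstacle is the topological bookkeeping in the middle step: ensuring that after refining the topology to make countably many Borel sets open, the result is still \emph{Polish} (not merely a finer topology) and still generates the \emph{same} Borel $\sigma$-algebra. This is where one must be careful to cite (or reprove) the lemma that the topology generated by a Polish topology together with countably many Borel-equivalent Polish refinements is again Polish and Borel-equivalent — this rests on the fact that a countable product of Polish spaces is Polish together with the observation that the diagonal in such a product, under a Borel-equivalent family, is a closed (hence Polish) subspace homeomorphic to $\mathcal X$ with the joined topology. The rest — reduction to one map, and the ``in particular'' clause — is routine.
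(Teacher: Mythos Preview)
Your proof is correct and follows essentially the same approach as the paper: both reduce to a single Borel map into the Polish product $\prod_n \mathcal P(\mathcal Y_n)$ and invoke the standard change-of-topology result (the paper cites \cite[Theorem 13.11]{kechris2012classical} directly, while you sketch its proof). For the ``in particular'' clause the paper gives a slightly slicker argument by writing $f = G \circ \iota_n$ with $G(x,p) = \int g(x,y)\,p(dy)$ continuous on $\mathcal X \times \mathcal P(\mathcal Y_n)$ and $\iota_n(x) = (x, K_n(x))$ continuous from $(\mathcal X,\tau)$, but your sequential splitting argument is equally valid.
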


\begin{proof}
	The map $K \colon \mathcal X \to \prod_{n \in \mathbb N} \mathcal P(\mathcal Y_n)$ where $K(x) = (K_n(x))_{n \in \mathbb N}$ is measurable and $\prod_{n \in \mathbb N} \mathcal P(\mathcal Y_n)$ is Polish as a countable product.
	Hence, by \cite[Theorem 13.11]{kechris2012classical} there exists a Polish topology $\tau$ on $\mathcal X$ such that 
	\[
	K \colon (\mathcal X, \tau) \to \prod_{n \in \mathbb N} \mathcal P(\mathcal Y_n),
	\]
	is continuous, $\tau \supseteq \tau^\mathcal X$, and the Borel sets of $\tau$ and $\tau^\mathcal X$ coincide.
	Clearly, this is equivalent to \eqref{eq:lem_continuity of kernels}.
	Finally, note that, for fixed $g \in C_b(\mathcal X \times \mathcal Y_n)$, the map $p \mapsto \int g(x,y) \, p(dx,dy)$ is continuous on $\mathcal P(\mathcal X \times \mathcal Y_n)$.
	Consequently, $G(x,p) := \int g(x,y) \, p(dy)$ is continuous on $\mathcal X \times \mathcal P(\mathcal Y_n)$.
	Write $\iota_n \colon (\mathcal X,\tau) \to \X \times \mathcal P(\mathcal Y_n)$ for the continuous map $\iota_n(x) = (x,K_n(x))$.
	We can decompose
	\[
	f(x) := \int g(x,y) \, K_n(x,dy)
	\]
	into $f = G \circ \iota_n$, which shows $f \in C_b((\mathcal X,\tau))$.
\end{proof}

    \begin{lemma}
    \label{lem:causality_char}
        Let $\mu \in \mathcal P(\X)$, $\nu \in \mathcal P(Y)$, and $\pi \in \Pi(\mu,\nu)$.
        Then $\pi \in \Pi_c(\mu,\nu)$ if and only if we have, for all $t \in \{2,\ldots,N\}$,
        \[
            \int f_t \, d\pi = 0 \quad \forall f_t \in \tilde{\mathcal A}_{\X,\mu,t}.
        \]
    \end{lemma}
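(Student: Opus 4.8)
The plan is to fix $t \in \{2,\ldots,N\}$, prove the \emph{pointwise-in-$t$} equivalence
\[
\Big(\int f_t\,d\pi = 0 \text{ for all } f_t \in \tilde{\mathcal A}_{\X,\mu,t}\Big)
\quad\iff\quad
\pi_{t,0}^{X_{1:t-1},Y_{1:t-1}} = \mu_t^{X_{1:t-1}} \ \ \pi\text{-a.s.},
\]
and then to assemble these equivalences over $t$. Indeed, by the observation recorded immediately after Definition \ref{def:causality} (that the right-hand side of \eqref{def:causal_property} equals $\pi$-a.s.\ the conditional marginal $\mu_{t+1}^{X_{1:t}}$, which in turn is forced because the $\X$-marginal of $\pi$ is $\mu$), one has $\pi \in \Pi_c(\mu,\nu)$ if and only if $\pi_{s+1,0}^{X_{1:s},Y_{1:s}} = \mu_{s+1}^{X_{1:s}}$ $\pi$-a.s.\ for every $s \in \{1,\ldots,N-1\}$; relabelling $s = t-1$ this is exactly the conjunction over $t \in \{2,\ldots,N\}$ of the right-hand sides above, so the displayed equivalences give the lemma at once.

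To prove the displayed equivalence, fix $t$ and $a_t \in C_b(\X_{1:t} \times \Y_{1:t-1})$, and let $f_t \in \tilde{\mathcal A}_{\X,\mu,t}$ be the associated test function. Since the $\X$-marginal of $\pi$ is $\mu$, \eqref{eq:kernel_for_refined_mu} lets us replace the relevant coordinate-$t$ marginal of $K_{\mu,\cdot}(x)$ by $\mu_t^{x_{1:t-1}}$ for $\pi$-a.e.\ $x$, so $f_t$ is a bounded, $\pi$-integrable function (even if only $\pi$-a.s.\ defined, which is all we need), and Fubini's theorem yields
\[
\int f_t\,d\pi
= \int a_t\,d\pi_{1:t,1:t-1} - \int a_t\, d\big(\pi_{1:t-1,1:t-1}\otimes \mu_t^{X_{1:t-1}}\big).
\]
Hence "$\int f_t\,d\pi = 0$ for all $f_t \in \tilde{\mathcal A}_{\X,\mu,t}$" is equivalent to
\[
\int a_t\,d\pi_{1:t,1:t-1} = \int a_t\, d\big(\pi_{1:t-1,1:t-1}\otimes \mu_t^{X_{1:t-1}}\big) \qquad \text{for all } a_t \in C_b(\X_{1:t} \times \Y_{1:t-1}).
\]

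Both $\pi_{1:t,1:t-1}$ and $\pi_{1:t-1,1:t-1}\otimes \mu_t^{X_{1:t-1}}$ are Borel probability measures on the Polish space $\X_{1:t}\times\Y_{1:t-1}$ with common $(X_{1:t-1},Y_{1:t-1})$-marginal $\pi_{1:t-1,1:t-1}$. Since a finite Borel measure on a Polish space is uniquely determined by its integrals against bounded continuous functions, the last display holds iff $\pi_{1:t,1:t-1} = \pi_{1:t-1,1:t-1}\otimes \mu_t^{X_{1:t-1}}$; and since $\pi_{1:t,1:t-1} = \pi_{1:t-1,1:t-1}\otimes \pi_{t,0}^{X_{1:t-1},Y_{1:t-1}}$, uniqueness of disintegrations w.r.t.\ the common marginal shows this is equivalent to $\pi_{t,0}^{X_{1:t-1},Y_{1:t-1}} = \mu_t^{X_{1:t-1}}$ $\pi$-a.s. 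This completes the pointwise equivalence, and with it the proof.

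\textbf{Main obstacle.} There is no deep difficulty here; the argument is bookkeeping. The two points needing care are: (i) keeping track of the almost-sure qualifiers, since $K_{\mu,\cdot}$ is a \emph{fixed} version of a conditional law, so $f_t$ is only $\pi$-a.s.\ defined (harmless under integration against $\pi$); and (ii) the passage from "equal integrals against all of $C_b$" to "equal measures" and then "equal disintegration kernels" — the standard monotone-class / Portmanteau argument on a Polish space together with essential uniqueness of disintegrations. One should also state explicitly the index shift $t \mapsto t-1$ that matches the range $\{2,\ldots,N\}$ in the statement with the range $\{1,\ldots,N-1\}$ in Definition \ref{def:causality}.
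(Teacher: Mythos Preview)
Your proof is correct and follows essentially the same route as the paper's: reduce causality of $\pi$ to the equality of measures $\pi_{1:t,1:t-1} = \pi_{1:t-1,1:t-1}\otimes \mu_t^{X_{1:t-1}}$ for each $t$, invoke that $C_b$ separates Borel probability measures on a Polish space, and rearrange the integral condition for $f_t$. Your write-up is somewhat more explicit than the paper's (spelling out the index shift, the a.s.\ identification of $K_{\mu,\cdot}$ with the conditional law, and the uniqueness-of-disintegration step), but the underlying argument is identical.
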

    
    \begin{proof}
        Since $\pi \in \Pi(\mu,\nu)$ we have by Definition \ref{def:causality} that $\pi$ is causal if and only if, for all $ t \in \{2,\ldots,N\}$,
        \[
            \pi_{1:t,1:t-1} = \pi_{1:t-1,1:t-1} \otimes \mu^{X_{1:t-1}}_{t}.
        \]
        As continuous functions are point-separating on $\mathcal P(\X_{1:t+1} \times Y_{1:t})$, see for example \cite[Chapter 3, Theorem 4.5]{ethier2009markov}, we find by the previous that $\pi$ is causal if and only if, for all $t \in \{2,\ldots, N\}$ and $a_t \in C_b(\X_{1:t} \times \Y_{1:t-1})$,
        \[
            \int a_t(x_{1:t},y_{1:t-1}) \, d\pi_{1:t,1:t-1} = \int \int a_t(x_{1:t}, y_{1:t-1}) \, d\mu^{x_{1:t-1}}_t \, d\pi_{1:t-1,1:t-1}.
        \]
        Rearranging the terms in the displayed equation above yields the assertion.
    \end{proof}

\begin{lemma}
	\label{lem:all sets of couplings are compact}
	Let $\mu \in \mathcal P(\X)$ and $\nu \in \mathcal P(\Y)$.
	Then $\Pi_\bullet(\mu,\nu)$ is compact w.r.t.\ weak convergence in $\mathcal P(\X \times \Y)$.
\end{lemma}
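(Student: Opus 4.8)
The plan is to reduce the statement to the classical fact that $\Pi(\mu,\nu)$ is weakly compact, and then to use the characterization of causality from Lemma \ref{lem:causality_char} together with the refined topologies of Lemma \ref{lem:refined topology} to see that the causal/anticausal/bicausal constraints cut out a \emph{closed} subset in a suitable finer weak topology.

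First I would dispose of the case $\bullet=\emptyset$. Here $\{\mu\}$ and $\{\nu\}$ are tight by Ulam's theorem, hence $\Pi(\mu,\nu)$ is tight, and it is weakly closed because the marginal projections $\pi\mapsto\pi_{1:N,0}$ and $\pi\mapsto\pi_{0,1:N}$ are weakly continuous; Prokhorov's theorem then gives weak compactness in $\mathcal P(\X\times\Y)$ (cf.\ \cite{villani2009optimal}). Next, for $\bullet\in\{c,ac,bc\}$ I would pass to the refined topologies $\tau_{\X,\mu}\supseteq\tau^\X$ and $\tau_{\Y,\nu}\supseteq\tau^\Y$ introduced in Section \ref{subsec:duality} via Lemma \ref{lem:refined topology}, writing $\X^\mu:=(\X,\tau_{\X,\mu})$ and $\Y^\nu:=(\Y,\tau_{\Y,\nu})$; these are Polish spaces with the same Borel $\sigma$-algebras as $\X$ and $\Y$, so $\mu$ and $\nu$ are still Borel probability measures on $\X^\mu$ and $\Y^\nu$, and exactly as in the first step $\Pi(\mu,\nu)$ is weakly compact in $\mathcal P(\X^\mu\times\Y^\nu)$.

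Now I would invoke Lemma \ref{lem:causality_char} and its anticausal analogue to write
\[
\Pi_\bullet(\mu,\nu)=\Pi(\mu,\nu)\cap\bigcap_{t}\bigcap_{f_t}\Big\{\pi\in\mathcal P(\X^\mu\times\Y^\nu):\int f_t\,d\pi=0\Big\},
\]
where $f_t$ ranges over $\tilde{\mathcal A}_{\X,\mu,t}$ if $\bullet=c$, over $\tilde{\mathcal A}_{\Y,\nu,t}$ if $\bullet=ac$, and over both families if $\bullet=bc$. By Lemma \ref{lem:refined topology} each such $f_t$ belongs to $C_b(\X^\mu\times\Y^\nu)$, so each set $\{\pi:\int f_t\,d\pi=0\}$ is weakly closed in $\mathcal P(\X^\mu\times\Y^\nu)$; hence $\Pi_\bullet(\mu,\nu)$ is a closed subset of the weakly compact set $\Pi(\mu,\nu)$, and therefore is itself weakly compact in $\mathcal P(\X^\mu\times\Y^\nu)$. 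Finally I would transfer this back to the original topology: since $\tau_{\X,\mu}\supseteq\tau^\X$ and $\tau_{\Y,\nu}\supseteq\tau^\Y$ we have $C_b(\X\times\Y)\subseteq C_b(\X^\mu\times\Y^\nu)$, so the identity map $\iota\colon\mathcal P(\X^\mu\times\Y^\nu)\to\mathcal P(\X\times\Y)$ (well defined because the Borel $\sigma$-algebras agree and every finite Borel measure on a Polish space is tight) is weakly continuous. As $\iota$ is the identity on measures, $\iota(\Pi_\bullet(\mu,\nu))=\Pi_\bullet(\mu,\nu)$, and a continuous image of a compact set is compact; so $\Pi_\bullet(\mu,\nu)$ is weakly compact in $\mathcal P(\X\times\Y)$.

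The main obstacle is precisely the point the refined topology is built to handle: the causality constraints are \emph{not} weakly closed in the original topology, because the conditional kernels $x_{1:t-1}\mapsto\mu_t^{x_{1:t-1}}$ appearing in their defining identities need not depend continuously on $x$. Lemma \ref{lem:refined topology} repairs this by enlarging the topology without altering the Borel structure, after which the closedness step is immediate and the return to the original topology costs nothing.
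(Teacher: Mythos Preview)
Your proof is correct and follows essentially the same approach as the paper: pass to the refined Polish topologies $\X^\mu,\Y^\nu$ from Lemma~\ref{lem:refined topology}, use Lemma~\ref{lem:causality_char} to express $\Pi_\bullet(\mu,\nu)$ as an intersection of $\Pi(\mu,\nu)$ with level sets of functionals $\pi\mapsto\int f_t\,d\pi$ that are continuous in the refined weak topology, conclude compactness there, and push back to the original topology. Your version is somewhat more explicit (you spell out the transfer step via continuity of the identity $\mathcal P(\X^\mu\times\Y^\nu)\to\mathcal P(\X\times\Y)$, which the paper leaves implicit), but the argument is the same.
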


\begin{proof}
	As the Borel sets of $\X$ and $\X^\mu$ resp.\ $\Y$ and $\Y^\nu$ coincide, we have that
	$\mu$ and $\nu$ are Borel measures on $\mathcal P(\X^\mu)$ and $\mathcal P(\Y^\nu)$ respectively.
	Thus, $\Pi(\mu,\nu)$ is a compact subset of $\mathcal P(\X^\mu \otimes \Y^\nu)$.
	By Lemma \ref{lem:causality_char} we get
	\[
	\Pi_\bullet(\mu,\nu)
	=
	\left\{
	\pi \in \Pi(\mu,\nu)
	\, : \,
	\int s \, d\pi = \int s \, dP,
	\quad
	\forall s \in \tilde{\mathcal H}_c
	\right\}.
	\]
	Since $\tilde{\mathcal H}_\bullet \subseteq C_b(\X^\mu \otimes \Y^\nu)$ the identity above yields compactness of $\Pi_\bullet(\mu,\nu)$ as a subset of $\mathcal P(\X^\mu \otimes \Y^\nu)$, hence, it is compact in $\mathcal P(\X \times \Y)$.
\end{proof}

\bibliographystyle{imsart-number}
\bibliography{numbib.bib}

\end{document}